\theoremstyle{plain}
\newcommand{\blind}{0}
\theoremstyle{plain}
\newtheorem{theorem}{Theorem}[section]
\newtheorem{lemma}[theorem]{Lemma}
\theoremstyle{remark}
\newcommand{\beq}{\begin{equation}}
\newcommand{\eeq}{\end{equation}}
\newcommand{\bno}{\begin{Notes}}
\newcommand{\eno}{\end{Notes}\noindent}
\newcommand{\bdm}{\begin{displaymath}}
\newcommand{\edm}{\end{displaymath}}
\newcommand{\rbox}[2]{\raisebox{#1}{$\scriptstyle #2$}}
\newcommand{\rboxx}[2]{\ensuremath{\smash{\raisebox{#1}{$\scriptscriptstyle #2$}}}}
\newcommand{\smexp}[1]{^{ \rbox{-3pt}{#1}}}  %small exponent
\newcommand{\lgamma}{\scalebox{1.3}{$\gamma$}}   %larger gamma
\newcommand{\tss}{\textsubscript}
\newcommand{\Tstrut}{\rule{0pt}{2.6ex}} % Top strut
\newcommand{\symquote}[1]{\textquotedbl{}#1\textquotedbl{}}
\begin{document}

\date{}
\if0\blind
{
  \title{\bf The Maximal Variance of Unilaterally Truncated Gaussian and Chi Distributions}
  \author{
Robert J. Petrella  \\
\hspace{-1cm}   Department of Chemistry and Chemical Biology, Harvard University, Cambridge, MA \\
     Harvard Medical School, Boston, MA\\}
  \maketitle
} \fi

\if1\blind
{
  \bigskip
  \bigskip
  \bigskip
  \begin{center}
    {\LARGE\bf Title}
\end{center}
  \medskip
} \fi

\bigskip
\begin{abstract}
This work explores the bounds of the variance of unilaterally truncated Gaussian
distributions (UTGDs) and scaled chi distributions (UTSCDs) with fixed
means. For any arbitrary Gaussian distribution function, $f(x;\mu,\sigma)$, with a fixed, finite mean
$M$ on the truncated domain $x \ge a$, where $a \in \mathbb{R}$, it is proven that the variance is bounded: specifically,
%$\sup \mathrm{Var}(x)_{|x\ge a} = (M-a)^2.$  
$\sup \mathrm{Var}(x)_{|x\ge a}= \sup \mathrm{Var}(x)_{|x\le a} =(M-a)^2$.
For a fixed cutoff, $a$, the variance
can be considered a function of only $M$, $a$, and the location
parameter $\mu$. Examples of such approximating functions, which can be used for model
calibration, are developed in addition to other, related calibration methods. For UTSCDs,
numerical evidence is presented indicating that for $n \in \mathbb{Z+}$ degrees of freedom, or dimensions, and
a fixed, finite mean, the variance, $\mathrm{Var}(R)$, over $R \in [a,\infty)$ reaches its
maximum value $M^2(\pi-2)/2$ at $a=0$, $n=1$. For a fixed cutoff value, there is a local
maximum in the variance as a function of $n$, and the number of dimensions resulting
in the maximal variance, $n_{\mathrm{vmx}}$, increases with cutoff value. However, for $n \in \mathbb{R}$,
as the cutoff approaches $0$, $n_{\mathrm{vmx}}$ approaches $-1$, while $\mathrm{Var}(R)$ appears to
grow without bound.
\end{abstract}

\noindent%
{\it Keywords: UTGD, truncated normal distribution, Rayleigh distribution, Maxwell-Boltzmann distribution, 
multivariate distribution, positive support} 

\newpage
%\spacingset{1.75} % DON'T change the spacing!
\section{Introduction}
\label{sec:intro}
For Gaussian distributions
$f(x;\mu,\sigma)$ extending over the interval $x \in (-\infty,+\infty)$,
the mean and variance are independent. But over the interval $x \in [a,+\infty)$, or $x \in (-\infty,a]$,
where $a$ is finite, this is not the case.
Given a fixed, finite mean, $M(x;\mu,\sigma)$, over those unilaterally truncated intervals, the variance
is constrained.  This arises, for example, in the modeling of positive-valued
data with Gaussian functions, because the variance cannot be made arbitrarily large.
Over the interval $x \in [0,\infty)$, for any $(\mu,\sigma)$ ordered pair resulting in a fixed mean of
$M(x;\mu,\sigma)_{|x \ge 0} = 1$, for example, 
 the variance appears to approach an asymptote at $\mathrm{Var}(x;\mu,\sigma)_{|x \ge 0} = 1$,
as indicated by grid search calculations whose results are shown in
 Figure~\ref{fig:sigma_var_vs_mu_grid}.
In these cases, the domain of the model can be extended to $x \in (-\infty,+\infty)$, affording
 unlimited choices for the variance, but since the data is positively valued,
significant artifact may thereby be introduced.
\begin{figure}[h]
\vspace{-0.25cm}
\hspace{2cm}
\includegraphics[width=4.5in]{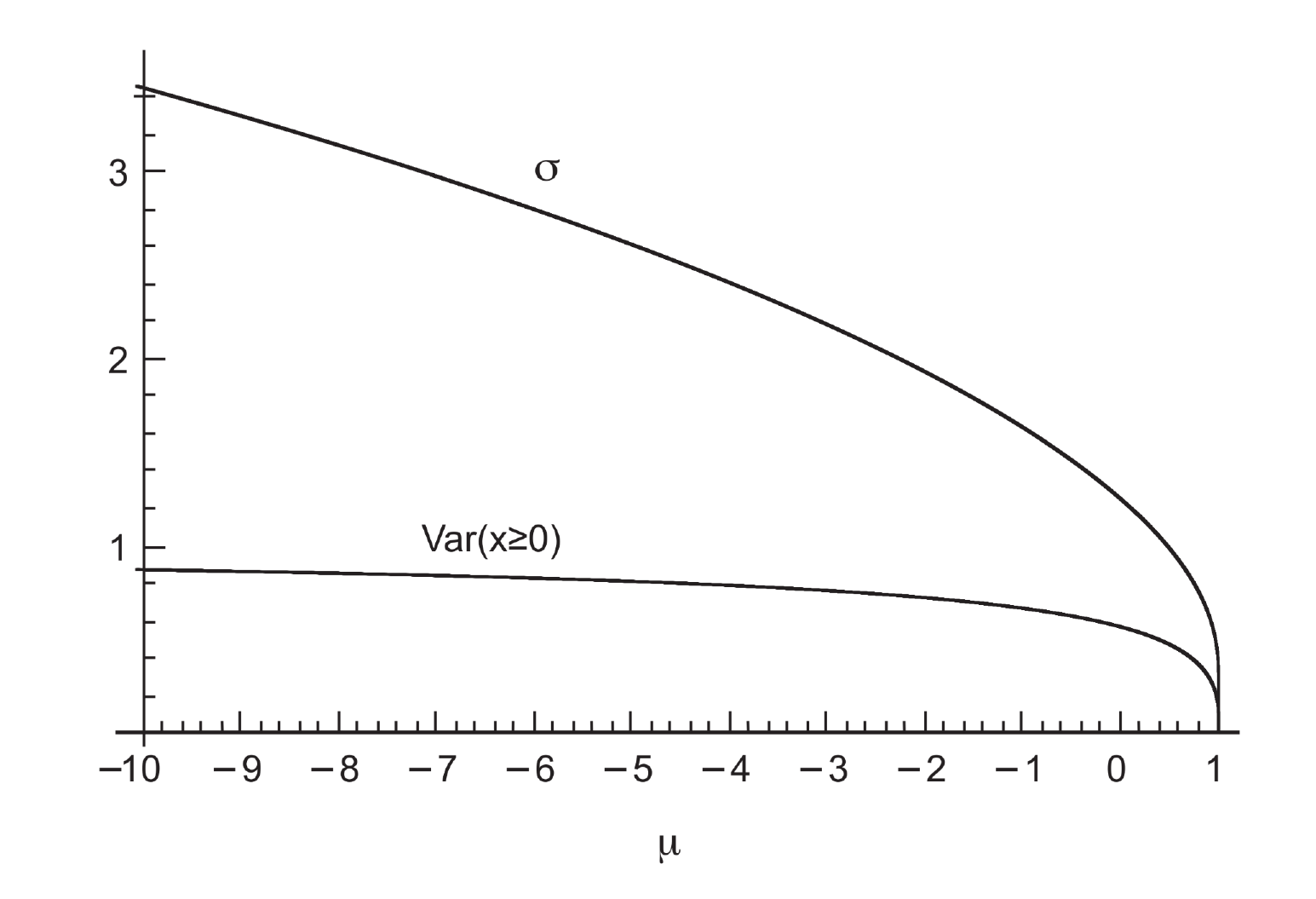}
\vspace{-0.5cm}
\caption{\small Ordered pairs $(\mu,\sigma)$ resulting in a fixed mean.  For each value of $\mu$,
the plot indicates the value of $\sigma$ resulting in a mean of 1 for a Gaussian distribution
over the interval $x \in [0,\infty)$.  Also shown is the resulting variance over the same interval,
which appears to approach 1 as $\mu$ becomes increasingly negative. The calculations involved successively finer
 2-D grid searches of $(\mu,\sigma)$ space.}
\label{fig:sigma_var_vs_mu_grid} 
\end{figure} 
The question arises as to whether the variance of Gaussian distributions having
fixed, finite means over their positive support, or over other unilaterally truncated intervals,
is, in fact, theoretically bounded. If it is, the questions then become what the bounds are,
whether they can be related to the mean, $M$, and whether the variance, itself, can be expressed 
strictly as a function of $M$ and the location parameter $\mu$. An affirmative answer
would inform attempts at using unilaterally truncated Gaussian distributions, or UTGDs,
in data modeling, and, for data that
is known to be Gaussian in form, would provide theoretical limits on the variance.

As pointed out by Horrace (\cite{Horrace2015}), Bera and Sharma (\cite{Bera1999}) showed that
over the interval of $x \in [0,\infty)$ the ratio of $M^2/\mathrm{Var}(x) > 1$.
In the current work, this idea is extended to prove that
for any arbitrary Gaussian distribution with parameters $\{\mu,a\} \in \mathbb{R}$,
 $\sigma \in \mathbb{R^{+}}$, and $a < M$, the least upper bound of the variance over the interval
$x \in [a,\infty)$, 
$\sup \mathrm{Var}(x)_{|x \ge a}=(M-a)^2$.
This is done by relating $\mu, \sigma$ and the cutoff value, $a$, through an additional parameter, 
$r = (\mu-a)/\sigma$, expressing the variance as a function of $M$, $r$, and $a$, and then 
fixing the mean and cutoff.
By symmetry, then, $ \sup \mathrm{Var}(x)_{|x \le a}=(M-a)^2$, for $a > M$.

The derivations are presented in detail for left-sided $( x \in [a,\infty))$ truncation of general,
one-dimensional (1-D) Gaussians. The results for right-sided truncation are similar and are summarized.
Although the derivations are carried out for the general case of arbitrary Gaussian distributions that
are not necessarily normalized, normalized distributions--i.e., probability density functions (pdfs)--are used
as illustrative examples in some cases.

\begin{singlespace}
There are several other main results in the current work:
\begin{itemize}
\item The variance of a UTGD,
$\mathrm{Var}(x)_{|x \ge a}$ (or $\mathrm{Var}(x)_{|x \le a}$),
can be expressed, at least approximately, strictly as a function of $\mu, M$, and $a$--i.e.,
$\mathrm{Var}(x;M,\mu)_{|x \ge a} = \tilde{\sigma}(\mu)^2 + (M-a)(\mu-a) -(M-a)^2$, 
where $\tilde{\sigma}(\mu)$ is a function approximating $\sigma(\mu)$. One application 
of such a function is to facilitate the determination of $\sigma$ and $\mu$ parameters for UTGDs
directly from the mean and variance by reducing the problem from two model degrees of freedom to one.
A general approach to the development of these approximating functions is described.

\item The variance of UTGDs can be determined from $M$ and the relative height of the distribution 
%at the boundary, $H = f(a)/f_{_\!M}$, where $f_{\!_M}$ is the distribution function's maximum. The 
at the boundary, $H = f(a)/f_{\!_M}$, where $f_{\!_M}$ is the distribution function's maximum. The 
variance decreases as $H$ decreases. For a fixed mean, a limiting threshold
value for this parameter determines the maximal variance of the distribution.
This implies that what is conventionally considered a Gaussian distribution
of positive-valued data--i.e., without a large truncation of the left tail--will
typically have a maximal variance of $\approx 0.2M^2$.

\item It is demonstrated that, for a given UTGD, $d\sigma/d\mu$ under a fixed mean and variance differs
depending on the particular analytic form of the variance used,
and hence the corresponding approximations to $\sigma(\mu)$ curves under constant mean and variance also differ.
This property can be used to estimate $\mu$ and $\sigma$ for UTGDs from the intersection of
the curves so as to reproduce the first and second moments of the data. Successive approximation using this approach rapidly
converges to highly accurate results. In addition, it is demonstrated that, for log-normal distributions, 
the moments of either the original or the log-transformed data can be reproduced.

\item The kurtosis of a unilaterally truncated
Gaussian distribution has a minimum value of $\approx 2.757$ at an $r$ value of $\approx 1.874$.

\item Numerical data is presented for the maximal variance of truncated, scaled chi distributions with $n$ 
distributional degrees of freedom, or dimensions.  These distributions can be thought of as arising from
radially symmetric $n$-dimensional Gaussian distributions,
$g(R;M,r,n)_{|R \ge a}dR$, where $R$ is the magnitude of the sample vector, and $a$ is, again, the cutoff value.
It is shown here that for a fixed mean, $M$, and for $n \in \mathbb{Z}+$,
the maximal variance, $\mathrm{V_{max}}(R)$ over the region $R \in [a,\infty)$ 
appears to be bounded by the variance of the half-normal distribution,
% (\cite{HalfNormalWiki}), 
$M^2(\pi-2)/2$, which occurs at $n=1, a=0$.
For fixed dimensionality ($n>0$), the maximal variance always seems to occur at a normalized cutoff of $|r| = a/\sigma \rightarrow 0$ 
and decreases with dimension as $\sim 1/\delta n$, where $\delta$ ranges within $[\pi/2,2]$.
By contrast, for fixed cutoff, there is a local maximum in
the variance with respect to $n$. The number of
dimensions resulting in the maximal variance, $n_{\mathrm{vmx}}$,
increases with cutoff value or $|r|$, as $n_{\mathrm{vmx}} \sim r^2 = a^2/\sigma^2$, while
the maximal variance, itself, decays asymptotically as an exponential for increasing cutoffs.
If $n$ is allowed to span the reals, the maximal variance over $R \in [a,\infty)$ for fixed $n$ increases further
as $n$ falls below 1 and appears to be unbounded in the region of $n \in [-2,0]\subset 
\mathbb{R}$, as $a \rightarrow 0$,
 and locally maximal at $n=-1$. 

\vspace{0.5cm}

\end{itemize}
\end{singlespace}
\vspace{-1em}
In cases where the models are over-specified, the divergence of parameter values
estimated using different moment functionals, which tends to correlate negatively with model accuracy, 
is illustrated.
Example applications of some of these findings are presented.

\section{Maximal variance of UTGDs, fixed mean and cutoff}
\label{sec:max_var_UTGD}
\subsection{Derivation of $\mathrm{Var}(x;M,r)_{|x \ge a}$}
\label{subsec:derive_var}
Consider an arbitrary Gaussian distribution function $f(x;\mu,\sigma)$, which is not necessarily normalized:
\beq
f(x;\mu,\sigma) = f_{\!_{M}} e^{-(x-\mu)^2/2\sigma^2}, 
\label{gaussian_mu_sig}
\eeq
\begin{figure}[H]
\hspace{-0.75cm}
\includegraphics[width=7.0in]{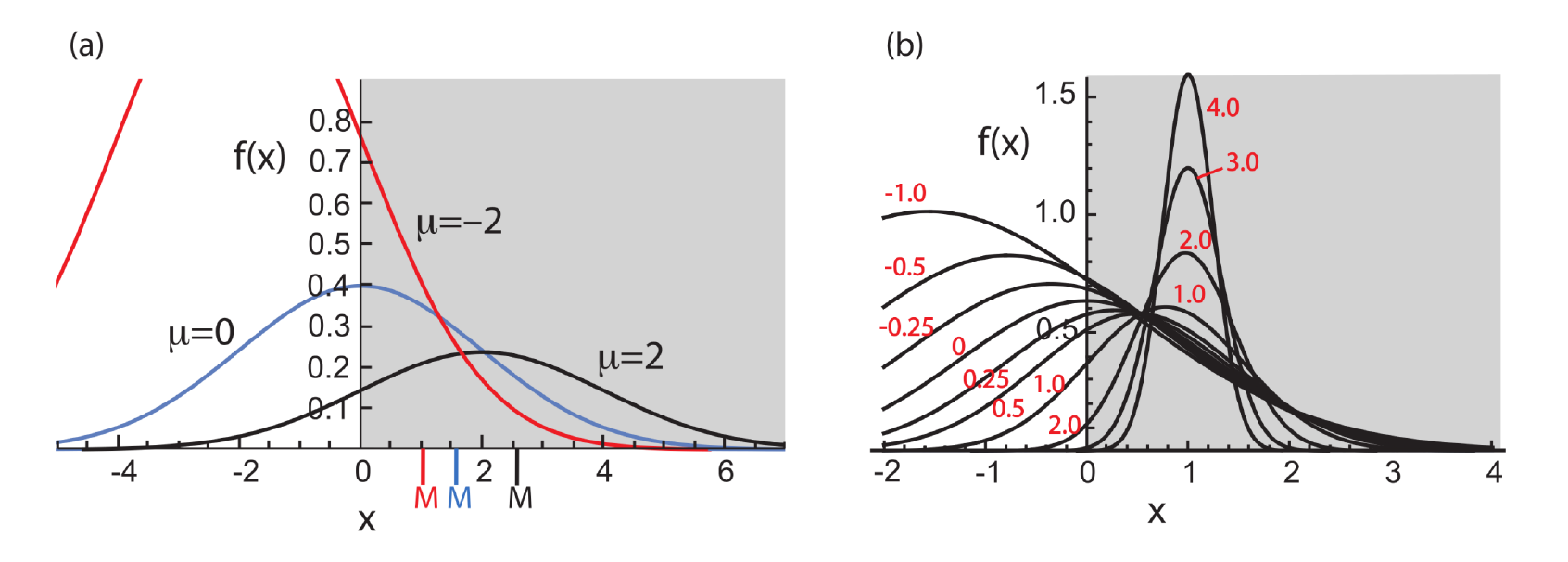}
\caption{\small \noindent Gaussian probability density functions. 
{\bf{Panel (a)}}: three Gaussian probability density functions (pdfs) with support over the shaded interval,
$x \in [0,\infty)$. For all three density functions, $\sigma=2$, but $\mu$ varies, as shown.
The means over the shaded interval are indicated by the \reflectbox{''}$M$''s.
They are approximately 1.0502, 1.5958, and 2.5752.  The variances are 0.7964, 1.4535, and 2.5187, respectively.
{\bf{Panel (b)}}: family of Gaussian pdfs with support over $x \in [0,\infty)$, 
fixed mean ($M(x)_{|x \ge 0}=1$) and varying $r$ values, which are
indicated in red. The corresponding $\mu$ and $\sigma$ values are listed in Table ~\ref{tab:mu_sig_from_r}.
The variance decreases with higher $r$ values, while the center of the pdfs, $\mu$,
approaches the mean from the left.}
\label{fig:3Gaussians_and_family}
\end{figure}
\noindent where $f_{\!_{M}} \in \mathbb{R^+} $ is the function's maximum, $f_{\!_{M}} = f(x=\mu)$.
The probability distribution is $f(x;\mu,\sigma) dx$, where $dx$ is a small interval in $x$, but
this notation is less important here than in the symmetric, multidimensional case described later.
The mean of this distribution over the interval of interest, $x \in [a,\infty)$, is
\beq
M(x;\mu,\sigma)_{|x \ge a} = 
\mu + \frac{\sqrt{2/\pi} \sigma e^{-(\mu-a)^2/2\sigma^2}}
{\erf\!\left((\mu-a)/\sqrt{2}\sigma\right) + 1}.
\label{mean_mu_sig}
\eeq
In this work, the $k$th (raw) moment is written $M_k$, but we use $M = M_1$ as shorthand for the mean.
See Figure~\ref{fig:3Gaussians_and_family}, Panel (a),
for examples of probability density functions (pdfs) with support over the interval
$x \in [0,\infty)$.
An inverse of the function in Eq.~(\ref{mean_mu_sig}), e.g., $\sigma(M,\mu)$, is sought, but no
 such inverse can be found in closed form.  Instead, let $\mu=r\sigma + a$ , where $\{r,a\} \in \mathbb{R}$ 
and $a \le M$. Then 
\beq
f(x;\sigma,r,a) = 
f_{\!_{M}} e^{-(x-r\sigma-a)^2/2\sigma^2}.
\label{trans_gaussian}
\eeq
and the mean becomes
\beq
M(x;\sigma,r)_{|x \ge a} 
= \sigma\left( r + \frac{\sqrt{2/\pi}} {e^{r^2/2}\xi(r)}\right) + a,
\label{mean_sig_r}
\eeq
where $r = (\mu -a)/\sigma$, and $\xi(r)= \erf\!\left(r/\sqrt{2}\right)+ 1$ is used as shorthand. Solving for
$\sigma$,
\beq
\sigma(M,r)_{|x \ge a} = \frac{(M-a) e^{r^2/2} \xi(r)}
{re^{r^2/2} \xi(r) + \sqrt{2/\pi}}.
\label{sigma_M_r}
\eeq
Note that $0< \xi(r) <2$ and $re^{r^2/2} \xi(r) + \sqrt{2/\pi} >  0$ for all $r \in \mathbb{R}$,
so the expressions in Eqs.~(\ref{mean_sig_r}) and (\ref{sigma_M_r}) are well-defined for all 
real $r$. 
Hence, for specified mean and $r$ value, 
$\sigma$ is fixed to the above value, while $\mu=r\sigma + a$ is
 also fixed.  For left-sided truncation, we assume $M>a$ throughout the analysis, so $\sigma > 0$ for all real $r$.
\begin{table}[H]
\setstretch{1}
\tabcolsep=0pt
\begin{tabular*}{\columnwidth}
{|@{\hspace{6pt}}@{\extracolsep{\fill}}rrrr@{\hspace{6pt}}||@{\hspace{6pt}}rrrr@{\hspace{6pt}}|}
\hline
\hline
\multicolumn{1}{|c}{r} &
\multicolumn{1}{c}{$\sigma$} &
\multicolumn{1}{c}{$\mu$} &
\multicolumn{1}{@{\hspace{0.5em}}c@{\hspace{0.5em}}||}{$\mathrm{Var}(x)_{|x \ge 0}$} &
\multicolumn{1}{c}{r} &
\multicolumn{1}{c}{$\sigma$} &
\multicolumn{1}{c}{$\mu$} &
\multicolumn{1}{@{\hspace{0.5em}}c@{\hspace{0.5em}}|}{$\mathrm{Var}(x)_{|x \ge 0}$} \\
\hline
-2.00 &  2.67942 &-5.35883  & 0.82044 & 0.25  &  1.11627 & 0.27907  & 0.52513 \\
-1.00 &  1.90427 &-1.90427  & 0.72198 & 0.50  &  0.99092 & 0.49546  & 0.47739 \\
-0.75 &  1.72778 &-1.29583  & 0.68938 & 1.00  &  0.77664 & 0.77664  & 0.37981 \\
-0.50 &  1.55987 &-0.77994  & 0.65327 & 2.00  &  0.48656 & 0.97312  & 0.20986 \\ 
-0.25 &  1.40144 &-0.35036  & 0.61366 & 3.00  &  0.33284 & 0.99852  & 0.10931 \\ 
0.00  &  1.25331 & 0.00000  & 0.57080  & 4.00  &  0.24999 & 0.99997  & 0.06246 \\
\hline
\end{tabular*}
\caption{\small Values of $\mu$ and $\sigma$ corresponding to various $r$ values, and the
resulting variance, over the interval $x \in [0,\infty)$, for a Gaussian pdf with
mean $M(x)_{|x \ge 0}=1$. (From Eqs.~(\ref{sigma_M_r}) and (\ref{var_M_r}).)}
\label{tab:mu_sig_from_r}
\end{table}

By substituting $\sigma$ into Eq.~(\ref{trans_gaussian}), the distribution $f(x)$ can now be expressed strictly 
in terms of the mean, $M$, the truncation boundary $a$, and $r$ (see Appendix, Section~\ref{subsec:form_of_f}).
Figure~\ref{fig:3Gaussians_and_family}, Panel (b), shows the family of Gaussian pdfs normalized to the interval $x \in [0,\infty)$ 
for fixed mean of $M(x)_{|x \ge 0}=1$ and varying $r$. Figure~\ref{fig:mu.and.s.vs.r.and.var.vs.r.horiz}
illustrates the dependence on $r$ of the parameters $\mu$ and $\sigma$ (Panel a) as well as the 
variance over this same interval for the same, fixed mean ($M(x)_{|x \ge 0}=1$, Panel b). 

The second moment is

\begin{figure}[h]
\vspace{-3cm}
\hspace{-1.5cm}
\includegraphics[width=7.5in]{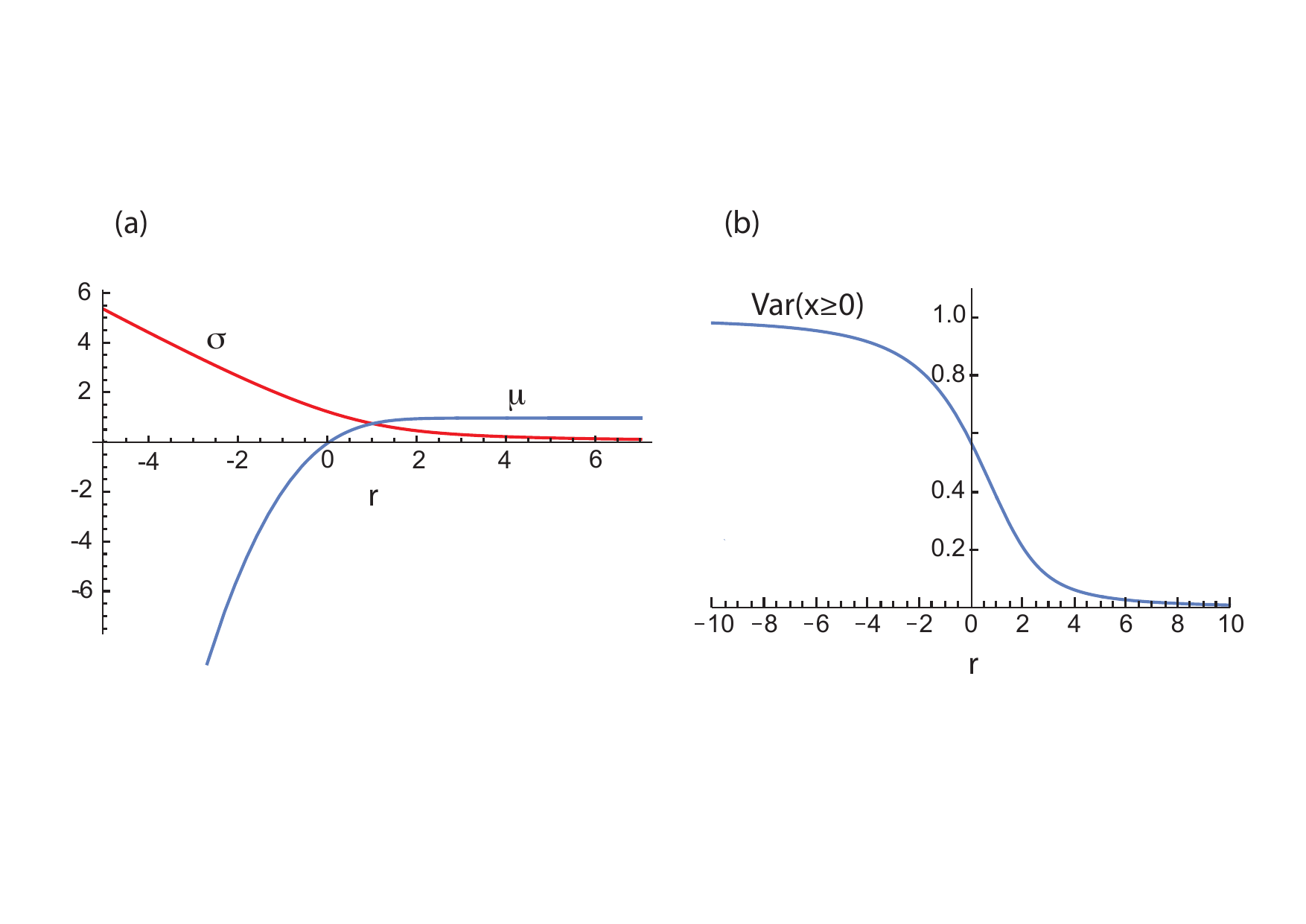}
\vspace{-4cm}
\caption{\small Dependence of parameters and variance on the $r$ value for Gaussian distributions over the
interval $x \in [0,\infty)$, in the case of $M=1$. {\bf{Panel (a)}}: the $\mu$ and $\sigma$ parameters 
plotted as a function of the r. {\bf{Panel (b)}}: the variance plotted as a function of $r$.}
\label{fig:mu.and.s.vs.r.and.var.vs.r.horiz}
\end{figure}

\beq
M_2(x;\sigma,r)_{|x \ge a} = 
\sigma^2 + (r \sigma + a)^2  + \frac{\sqrt{2/\pi} \sigma(r\sigma + 2a)} {e^{r^2/2}\xi(r)}.
\label{M_2_sig_r}
\eeq
Combining Equations (\ref{mean_sig_r}) and (\ref{M_2_sig_r}), the variance is 
$\mathrm{Var}(x;\sigma,r)_{|x \ge a} = M_2 - M^2 =$
\beq
\sigma^2\left(1-\frac{\sqrt{2/\pi} r}{e^{r^2/2} \xi(r)} - \frac{2}{\pi e^{r^2} \xi(r)^2}\right)=
 \sigma^2 \left( 1- \frac{\sqrt{2\pi}~r e^{r^2/2}\xi(r) + 2}{\pi e^{r^2}\xi(r)^2}\right).
\label{var_sig_r}
\eeq
We call this the unsubstituted form or Form I of the variance.
Then, substituting the expression for $\sigma$ from Eq.~(\ref{sigma_M_r}) 
into Eq.~(\ref{var_sig_r}), the variance can be expressed in terms of the mean and $r$, as
\beq
\mathrm{Var}(x;M,r)_{|x \ge a} =
\frac{(M-a)^2\left(\pi e^{r^2}\xi(r)^2 -
\sqrt{2\pi} r e^{r^2/2} \xi(r) -2\right) }
{\pi\left(r e^{r^2/2} \xi(r)+\sqrt{2/\pi}\right)^2 }
\label{var_M_r}
\eeq
after simplification. This is the substituted form or Form II of the variance. The distinction between
the forms of the variance is not only important in derivations but also useful in model parameterization, 
or calibration, as described in later sections.

It is proven in the next subsection (\ref{subsec:proof_var}) that $ \sup \mathrm{Var}(x;M,r)_{|x \ge a} = (M-a)^2$.
Thus, for any arbitrary unilaterally truncated Gaussian probability distribution function
with a finite mean, $\mathrm{Var}(x)_{|x \ge a} < (M-a)^2$, and, in particular, $ \sup \mathrm{Var}(x)_{|x \ge 0} = M^2$. 
The larger variances
occur with more highly negative $\mu$ (or $r$) values (see Table \ref{tab:mu_sig_from_r}), where the included portions
of the distribution shift entirely to flatter portions of the right tail.

As illustrated in Figure~\ref{fig:mu.and.s.vs.r.and.var.vs.r.horiz}, Panel (b),
$\mathrm{Var}(M,r)_{|x \ge a}/(M-a)^2$  is monotonically decreasing with $r$. Although this is not formally 
proven here for all $r \in \mathbb{R}$, it can be verified that $\partial (\mathrm{Var}(x;M,r)_{|x \ge 0})/\partial r < 0$ 
for sampled $r$ over at least an interval of  
$r \in [-10^{5},10^{5}]$.
A number of these results for the case of $M=1$ are listed in the Supplementary Material (Table \ref{tab:end_values_part_var},
Section \ref{subsec:val_partial_der_var}).
For example, at $r = -2^{18}=-262144$, the derivative of the variance remains negative while the variance
 is $\approx 1 - 3 \times 10^{-11}$.
In addition, as shown analytically in the Supplementary Material (Section~\ref{subsec:derive_var_large_r}),
the derivative of the variance approaches zero from below as $r \rightarrow \pm \infty$.
Finally, as shown in the Appendix (Section \ref{subsec:lemma_proofs}), the limiting variance as $r$
becomes very large and negative is $(M-a)^2$.

\subsection{Proof that  $\mathrm{sup Var}(x;M,r)_{|x>a}$ = $(M-a)^2$}
\label{subsec:proof_var}

There are six supporting lemmas for the proof. Each concerns a
UTGD function $f(x;\mu,\sigma)$ with mean $M$ over the interval
$x \in [a,\infty)$. They are all listed here, but the proofs of the first five are 
straightforward and are detailed in the Appendix (Section~\ref{subsec:lemma_proofs}).
\begin{lemma}
$\lim_{r \to -\infty} \mathrm{Var}(x;M,r)_{|x \ge a} = (M-a)^2.$
\label{lemma:limit_var}
\end{lemma}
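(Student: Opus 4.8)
The plan is to compute the limit directly from Form~II of the variance, Eq.~(\ref{var_M_r}), by carefully tracking the asymptotics of $\xi(r)$ as $r \to -\infty$. The key fact is that $\xi(r) = \erf(r/\sqrt 2) + 1 \to 0$ as $r \to -\infty$, so both numerator and denominator in Eq.~(\ref{var_M_r}) collapse toward the $r$-dependent terms, and one must determine which terms dominate. First I would recall the standard asymptotic expansion of the complementary error function: since $\erf(r/\sqrt 2) = -\erf(|r|/\sqrt 2) = -(1 - \erfc(|r|/\sqrt 2))$ for $r<0$, we have $\xi(r) = \erfc(|r|/\sqrt 2) \sim \sqrt{2/\pi}\,\dfrac{e^{-r^2/2}}{|r|}$ as $r \to -\infty$, with the next correction of relative order $1/r^2$. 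The crucial combination appearing in both numerator and denominator is $r e^{r^2/2}\xi(r)$; substituting the asymptotic form gives $r e^{r^2/2}\xi(r) \sim r \cdot \sqrt{2/\pi}/|r| = -\sqrt{2/\pi}$ as $r\to-\infty$ (the $e^{r^2/2}$ factors cancel exactly against the $e^{-r^2/2}$ in $\xi$).

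Next I would plug this into the denominator of Eq.~(\ref{var_M_r}): $r e^{r^2/2}\xi(r) + \sqrt{2/\pi} \to -\sqrt{2/\pi} + \sqrt{2/\pi} = 0$, so the denominator vanishes — which is consistent with $\sigma \to \infty$ from Eq.~(\ref{sigma_M_r}) — and hence I cannot simply substitute. Instead the cleaner route is to work with Form~I, Eq.~(\ref{var_sig_r}), namely $\mathrm{Var} = \sigma^2\bigl(1 - \sqrt{2/\pi}\,r/(e^{r^2/2}\xi(r)) - 2/(\pi e^{r^2}\xi(r)^2)\bigr)$, together with the exact relation $\sigma = (M-a)\big/\bigl(r + \sqrt{2/\pi}/(e^{r^2/2}\xi(r))\bigr)$ from Eq.~(\ref{mean_sig_r})/(\ref{sigma_M_r}). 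Writing $t(r) := \sqrt{2/\pi}\big/(e^{r^2/2}\xi(r))$, we have $\sigma = (M-a)/(r+t)$ and $\mathrm{Var} = \sigma^2\bigl(1 - r t - t^2\bigr)$, so that
\beq
\mathrm{Var}(x;M,r)_{|x\ge a} = (M-a)^2 \cdot \frac{1 - rt - t^2}{(r+t)^2}.
\label{eq:var_in_terms_of_t}
\eeq
The limit $r\to -\infty$ then reduces to a one-variable asymptotic problem for $t(r)$. From the asymptotic expansion of $\xi$ one obtains $t(r) = -r\bigl(1 + 1/r^2 + o(1/r^2)\bigr)^{-1}\cdot(\text{something})$; more precisely $e^{r^2/2}\xi(r) = \sqrt{2/\pi}\,\bigl(1/|r| - 1/|r|^3 + \cdots\bigr) = \sqrt{2/\pi}\,\bigl(-1/r + 1/r^3 + \cdots\bigr)$, whence $t(r) = \bigl(-1/r + 1/r^3 + \cdots\bigr)^{-1} = -r\bigl(1 - 1/r^2 + \cdots\bigr) = -r + 1/r + \cdots$.

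Substituting $t = -r + 1/r + O(1/r^3)$ into Eq.~(\ref{eq:var_in_terms_of_t}): the denominator is $(r+t)^2 = (1/r + O(1/r^3))^2 = 1/r^2 + O(1/r^4)$; the numerator is $1 - rt - t^2 = 1 - r(-r+1/r) - (-r+1/r)^2 + \cdots = 1 + r^2 - 1 - (r^2 - 2 + 1/r^2) + \cdots = 1 + 1/r^2 + \cdots$ — wait, I would of course redo this arithmetic carefully, keeping enough terms — and the leading behaviour of the ratio is $(1/r^2)/(1/r^2) = 1$, giving $\mathrm{Var} \to (M-a)^2$. The main obstacle, and the place to be most careful, is precisely this cancellation: both numerator and denominator in Eq.~(\ref{eq:var_in_terms_of_t}) tend to $0$ at rate $1/r^2$, so I must carry the error-function expansion to enough orders that the leading nonzero coefficients of both are pinned down exactly; an off-by-one in the expansion of $\erfc$ would spoil the constant. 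A clean way to manage this rigorously is to set $u = 1/r \to 0^-$, expand $t$ as a convergent (or at least asymptotic with controlled remainder) series in $u$ using known bounds on the error-function remainder, and then evaluate $\lim_{u\to 0^-}(1 - t/u \cdot u^{-1}\cdot\ldots)$ — i.e., reduce everything to a standard $0/0$ limit in $u$ with explicitly bounded remainder terms, so that L'Hôpital or direct series division finishes the job. Everything else (continuity of $\erf$, positivity of $\xi$ and of $r e^{r^2/2}\xi(r) + \sqrt{2/\pi}$, hence well-definedness along the limit) is already established in the text preceding the lemma.
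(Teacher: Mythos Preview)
Your approach is essentially the paper's: both compute the limit by feeding the asymptotic expansion of $\erfc$ into the variance formula and tracking a $0/0$ cancellation. The paper works directly with the three pieces $T_1=\pi e^{r^2}\xi(r)^2$, $T_2=\sqrt{2\pi}\,re^{r^2/2}\xi(r)$, $T_3=\pi(\sqrt{2/\pi}+re^{r^2/2}\xi(r))^2$ of Eq.~(\ref{var_M_r}) and expands each to fourth order in $1/r$; your reparametrization via $t(r)$ is just a repackaging of the same expansion (indeed, your $t$ is exactly the $T(r)$ used in the paper's proof of Lemma~\ref{lemma:var_M_r_sig}).

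Two concrete corrections to your sketch. First, the reciprocal step has a sign slip: $(-1/r+1/r^3+\cdots)^{-1}=-r(1-1/r^2+\cdots)^{-1}=-r(1+1/r^2+\cdots)=-r-1/r+\cdots$, not $-r+1/r$. Second, and more importantly, even with the corrected two-term expansion $t=-r-1/r$ the numerator evaluates to $1-rt-t^2=-1/r^2$, which against $(r+t)^2=1/r^2$ would give the limit $-1$, not $+1$. You must carry one more order, $t=-r-1/r+2/r^3+O(1/r^5)$, to obtain $1-rt-t^2=1/r^2+O(1/r^4)$ and hence the correct limit $(M-a)^2$. This is precisely why the paper insists that the expansions be taken to at least fourth order; your cautionary remark about ``enough orders'' is on point, but the threshold is one step higher than your displayed computation suggests.
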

\begin{lemma}
$\lim_{r \to +\infty} \mathrm{Var}(x;M,r)_{|x \ge a} = 0.$
\label{lemma:limit_var_pos_inf}
\end{lemma}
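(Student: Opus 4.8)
The plan is to read off the limit from the two explicit forms of the variance. Writing $\mathrm{Var}(x;M,r)_{|x\ge a}$ as $\sigma(M,r)^2$ times the bracketed factor in Form~I, Eq.~(\ref{var_sig_r}), I would show separately that $\sigma(M,r)\to 0$ as $r\to+\infty$ and that the bracketed factor stays bounded (indeed tends to $1$); the product then tends to $0$, and since a variance is nonnegative the limit is squeezed to $0$.

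First I would handle $\sigma(M,r)$. Dividing numerator and denominator of Eq.~(\ref{sigma_M_r}) by the everywhere-positive quantity $e^{r^2/2}\xi(r)$ gives
\[
\sigma(M,r)_{|x\ge a} = \frac{M-a}{\,r + \dfrac{\sqrt{2/\pi}}{e^{r^2/2}\xi(r)}\,}.
\]
As $r\to+\infty$ we have $\xi(r)=\erf(r/\sqrt2)+1\to 2$ and $e^{r^2/2}\to\infty$, so the second summand in the denominator tends to $0$ and the denominator tends to $+\infty$; since $M-a$ is a fixed finite positive quantity, $\sigma(M,r)\to 0$ (in fact $\sigma(M,r)\sim(M-a)/r$). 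Next I would invoke Form~I, Eq.~(\ref{var_sig_r}):
\[
\mathrm{Var}(x;\sigma,r)_{|x\ge a} = \sigma^2\left(1 - \frac{\sqrt{2/\pi}\,r}{e^{r^2/2}\xi(r)} - \frac{2}{\pi e^{r^2}\xi(r)^2}\right).
\]
For $r\ge 0$ both subtracted terms are nonnegative (numerators $\sqrt{2/\pi}\,r\ge 0$ and $2>0$, denominators positive), so the bracketed factor lies in $(0,1]$; moreover each subtracted term tends to $0$ as $r\to+\infty$, so the factor tends to $1$. Hence, by the product rule for limits (or a squeeze using $0\le\mathrm{Var}\le\sigma^2$ on $r\ge 0$), $\lim_{r\to+\infty}\mathrm{Var}(x;M,r)_{|x\ge a}=\bigl(\lim_{r\to+\infty}\sigma(M,r)^2\bigr)\cdot 1=0$.

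I do not expect a genuine obstacle here; the only point needing a little care is making the estimate $\sigma(M,r)\to 0$ rigorous, i.e.\ controlling the term $\sqrt{2/\pi}/(e^{r^2/2}\xi(r))$ for large $r$ (trivially bounded once $\xi(r)\ge 1$), and confirming the sign of the two correction terms in Form~I on $r\ge 0$ so that the factor is genuinely bounded by $1$. An equally short alternative is to argue directly from Form~II, Eq.~(\ref{var_M_r}): dividing numerator and denominator by $e^{r^2}\xi(r)^2$ shows the quotient is asymptotic to $(M-a)^2/r^2$, which also $\to 0$; this bypasses $\sigma$ but requires tracking three vanishing terms instead of one.
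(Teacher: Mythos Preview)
Your proof is correct and takes a genuinely different route from the paper's. The paper works directly with Form~II (Eq.~(\ref{var_M_r})), identifying the dominant terms in numerator and denominator as $r\to+\infty$ (namely $4\pi e^{r^2}(M-a)^2$ and $4\pi r^2 e^{r^2}$) and reading off the asymptotic $\mathrm{Var}\sim(M-a)^2/r^2$; it then backs this up with a fourth-order Laurent expansion. You instead factor via Form~I as $\sigma(M,r)^2$ times a bracketed correction, prove $\sigma(M,r)\to 0$ from the rewritten Eq.~(\ref{sigma_M_r}), and bound the correction in $[0,1]$ on $r\ge 0$ by a simple sign argument, so the squeeze $0\le\mathrm{Var}\le\sigma^2$ finishes it. Your decomposition is arguably cleaner---it avoids the slightly informal ``dominant term'' comparison and needs no series expansion---while the paper's approach has the advantage of delivering the exact leading asymptotic $(M-a)^2/r^2$ in one stroke, which it reuses elsewhere (e.g.\ in the discussion around Eq.~(\ref{sigma_sim_M_r})). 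Your closing remark that Form~II yields the same $(M-a)^2/r^2$ after dividing through by $e^{r^2}\xi(r)^2$ is essentially the paper's argument in compressed form.
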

\begin{lemma}
The variance $\mathrm{Var}(x;M,r)_{|x \ge a}$ is differentiable with respect to $r$ at least 
at all finite values of $r$.
\label{lemma:var_different}
\end{lemma}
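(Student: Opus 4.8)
\emph{Proof proposal.} The plan is to read off differentiability directly from the closed form of the variance in Form~II, Eq.~(\ref{var_M_r}), where every dependence on $r$ is explicit (the mean $M$ and cutoff $a$ being held fixed). Write $\mathrm{Var}(x;M,r)_{|x\ge a} = N(r)/D(r)$ with
\[
N(r) = (M-a)^2\bigl(\pi e^{r^2}\xi(r)^2 - \sqrt{2\pi}\,r\,e^{r^2/2}\xi(r) - 2\bigr),
\qquad
D(r) = \pi\bigl(r\,e^{r^2/2}\xi(r) + \sqrt{2/\pi}\bigr)^2,
\]
and argue that $N$ and $D$ are differentiable and $D$ is nonvanishing on all of $\mathbb{R}$.

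First I would record that the building blocks are smooth on $\mathbb{R}$: the error function is real-analytic (its power series converges everywhere), hence so is $\xi(r) = \erf(r/\sqrt{2}) + 1$; and the maps $r \mapsto r$, $r \mapsto e^{r^2/2}$, $r \mapsto e^{r^2}$ are real-analytic as well. Since finite sums and products of real-analytic functions are again real-analytic, both $N$ and $D$ are $C^{\infty}$ (indeed analytic) on $\mathbb{R}$.

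Next I would invoke the nonvanishing of the denominator, which was already established below Eq.~(\ref{sigma_M_r}): $r\,e^{r^2/2}\xi(r) + \sqrt{2/\pi} > 0$ for every $r \in \mathbb{R}$ (equivalently, this follows from $M-a>0$ and $\sigma>0$ together with Eq.~(\ref{mean_sig_r})), so $D(r) > 0$ for all finite $r$. The quotient rule then gives that $N/D$ is differentiable wherever $N$ and $D$ are and $D\neq 0$, i.e.\ at every finite $r$, which is the claim. The same conclusion can be reached from Form~I, Eq.~(\ref{var_sig_r}), once one notes that $\sigma(M,r)$ in Eq.~(\ref{sigma_M_r}) is itself a ratio of smooth functions with strictly positive denominator, hence differentiable; or, more laboriously, by differentiating the integral representations of $M$ and $M_2$ under the integral sign, justified by the standard domination argument for Gaussian moment integrals. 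There is no real obstacle here: the only point needing care is that the Form~II denominator never vanishes on $\mathbb{R}$, and that has already been verified in the derivation, so I would keep the argument short and simply cite it.
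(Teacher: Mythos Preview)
Your argument is correct and follows essentially the same route as the paper's own proof: both work from Form~II of the variance, note that the numerator and denominator are built from the smooth functions $r$, $e^{r^2/2}$, $e^{r^2}$, and $\xi(r)$, and then invoke the already-established fact that $r\,e^{r^2/2}\xi(r)+\sqrt{2/\pi}>0$ on $\mathbb{R}$ to conclude the quotient is differentiable everywhere. The only cosmetic difference is that the paper writes out the derivative explicitly (its Eq.~(\ref{eq_dvar}), which it reuses later), whereas you appeal directly to the quotient rule without computing it.
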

\begin{lemma}
$\partial \sigma(M,r)/\partial r = -\mathrm{Var}(x;M,r)_{|x \ge a}/(M-a).$
\label{lemma:dsigdr}
\end{lemma}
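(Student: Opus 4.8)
The plan is to differentiate the closed form of $\sigma(M,r)$ in Eq.~(\ref{sigma_M_r}) directly and show that the result collapses onto Form~II of the variance, Eq.~(\ref{var_M_r}). I would first abbreviate $u(r) = e^{r^2/2}\xi(r)$ and $D(r) = r\,u(r) + \sqrt{2/\pi}$, so that Eq.~(\ref{sigma_M_r}) reads $\sigma(M,r) = (M-a)\,u(r)/D(r)$, with $D(r)>0$ for every real $r$ (as noted after Eq.~(\ref{sigma_M_r})); hence $\sigma$ is a ratio of smooth functions with nonvanishing denominator and the derivative exists everywhere. The single computational fact I would isolate is the identity $u'(r) = D(r)$: since $\xi'(r) = \sqrt{2/\pi}\,e^{-r^2/2}$ (the derivative of the error function), we get $u'(r) = r\,e^{r^2/2}\xi(r) + e^{r^2/2}\xi'(r) = r\,u(r) + \sqrt{2/\pi} = D(r)$.

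Given this, apply the quotient rule. Using $u' = D$ and $D' = u + r\,u' = u + rD$, one obtains $\partial \sigma(M,r)/\partial r = (M-a)\,(u'D - uD')/D^2 = (M-a)\,(D^2 - u^2 - r\,u\,D)/D^2$. Expanding $D = r\,u + \sqrt{2/\pi}$ inside the numerator, the $r^2u^2$ terms cancel and $D^2 - u^2 - r\,u\,D = 2/\pi + \sqrt{2/\pi}\,r\,u - u^2$, so $\partial \sigma(M,r)/\partial r = (M-a)\,(2/\pi + \sqrt{2/\pi}\,r\,u - u^2)/D^2$.

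Finally, I would compare this with the variance. Substituting $e^{r^2}\xi(r)^2 = u^2$ and $e^{r^2/2}\xi(r) = u$ into Eq.~(\ref{var_M_r}) gives $\mathrm{Var}(x;M,r)_{|x\ge a} = (M-a)^2\,(\pi u^2 - \sqrt{2\pi}\,r\,u - 2)/(\pi D^2)$; dividing by $-(M-a)$ and using $\sqrt{2\pi}/\pi = \sqrt{2/\pi}$ reproduces exactly the expression just derived for $\partial \sigma(M,r)/\partial r$, which proves the lemma. I do not anticipate a genuine obstacle: the only ``trick'' is spotting the identity $u'(r) = D(r)$ that makes the algebra close. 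An equivalent and perhaps cleaner route is to differentiate the fixed-mean constraint of Eq.~(\ref{mean_sig_r}) implicitly, obtaining $\partial \sigma/\partial r = -(\partial M/\partial r)/(\partial M/\partial\sigma)$ and then substituting $\partial M/\partial\sigma$ and $\partial M/\partial r$ (which again require only $\xi'$); either way the remaining bookkeeping is routine.
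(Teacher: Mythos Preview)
Your proof is correct and follows essentially the same route as the paper: differentiate $\sigma(M,r)$ from Eq.~(\ref{sigma_M_r}) via the quotient rule and identify the result with $-\mathrm{Var}(x;M,r)_{|x\ge a}/(M-a)$ from Eq.~(\ref{var_M_r}). Your abbreviation $u(r)=e^{r^2/2}\xi(r)$, together with the identity $u'(r)=D(r)$, is slightly cleaner than the paper's choice $G(r)=r\,u(r)$, but the underlying argument is the same.
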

\begin{lemma}
\beq
\mathrm{Var}(x;M,r,\sigma)_{|x \ge a} = \sigma^2 + (M-a) r\sigma - (M-a)^2.
\label{var_M_r_sig}
\eeq
\label{lemma:var_M_r_sig}
\end{lemma}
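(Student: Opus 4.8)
The plan is to obtain the identity by a direct algebraic reduction of the second-moment formula, Eq.~(\ref{M_2_sig_r}), combined with the mean equation, Eq.~(\ref{mean_sig_r}), rather than by manipulating Form~I or Form~II of the variance. The key observation that makes everything collapse is that the transcendental ``Mills-ratio'' term occurring in both equations is exactly $M-\mu$. Indeed, from Eq.~(\ref{mean_sig_r}) with $\mu=r\sigma+a$ one has $M-a = r\sigma + \frac{\sqrt{2/\pi}\,\sigma}{e^{r^2/2}\xi(r)}$, hence
\[
\frac{\sqrt{2/\pi}\,\sigma}{e^{r^2/2}\xi(r)} \;=\; M-(r\sigma+a) \;=\; M-\mu .
\]
Write $\tau := M-\mu$ for this quantity; everything remaining in the computation is polynomial in $\sigma,\mu,a,M$.

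Next I would rewrite Eq.~(\ref{M_2_sig_r}) using $\tau$. Since $\mu=r\sigma+a$, the coefficient $r\sigma+2a$ equals $\mu+a$ and $(r\sigma+a)^2=\mu^2$, so
\[
M_2 \;=\; \sigma^2 + \mu^2 + \tau(\mu+a) \;=\; \sigma^2 + \mu^2 + (M-\mu)(\mu+a).
\]
Subtracting $M^2$, expanding $(M-\mu)(\mu+a)=M\mu+Ma-\mu^2-\mu a$, and cancelling the $\pm\mu^2$ terms gives $\mathrm{Var} = \sigma^2 + M\mu + Ma - \mu a - M^2$. Grouping $M\mu-M^2 = M(\mu-M)$ and $Ma-\mu a = -a(\mu-M)$ collapses this to
\[
\mathrm{Var}(x;M,r,\sigma)_{|x\ge a} \;=\; \sigma^2 + (\mu-M)(M-a).
\]

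Finally I would split $\mu-M$ relative to the truncation point: $\mu-M = (\mu-a)-(M-a) = r\sigma - (M-a)$, using $\mu-a=r\sigma$. Substituting,
\[
\mathrm{Var}(x;M,r,\sigma)_{|x\ge a} \;=\; \sigma^2 + \bigl(r\sigma-(M-a)\bigr)(M-a) \;=\; \sigma^2 + (M-a)\,r\sigma - (M-a)^2,
\]
which is Eq.~(\ref{var_M_r_sig}). There is no real obstacle here beyond careful bookkeeping of the shift $a$; the two things worth checking explicitly are the identities $r\sigma+2a=\mu+a$ and $\mu-a=r\sigma$, and the remark that Eqs.~(\ref{mean_sig_r}) and (\ref{M_2_sig_r}) hold for every real $r$ (with $M>a$ guaranteeing $\sigma>0$), so the identity is valid over the full parameter range. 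As a consistency check one can also derive it from Form~I, Eq.~(\ref{var_sig_r}): setting $t := \sqrt{2/\pi}\,/(e^{r^2/2}\xi(r))$, so that $M-a=\sigma(r+t)$ and $\mathrm{Var}=\sigma^2(1-rt-t^2)$, and eliminating $t=(M-a)/\sigma-r$ reproduces the same expression.
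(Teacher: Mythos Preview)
Your proof is correct and is essentially the paper's own argument. The paper defines $T(r)=\sqrt{2/\pi}/(e^{r^2/2}\xi(r))$, solves the mean equation for $T(r)=(M-a)/\sigma-r$, and substitutes into Form~I written as $\sigma^2(1-rT-T^2)$; this is exactly your ``consistency check'' at the end, and your main route via $M_2$ differs only in that you substitute the Mills-ratio term one step earlier, before forming $M_2-M^2$.
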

\begin{lemma}
If there exists $r = r_0 \in \mathbb{R}$ such that $\mathrm{Var}(x;M,r_0,\sigma)_{|x \ge a}$ is
a global maximum, then $\mathrm{Var}(x;M,r_0,\sigma)_{|x \ge a} < (M-a)^2$.
\label{lemma:glob_max}
\end{lemma}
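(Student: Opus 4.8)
The plan is to use the first-order optimality condition at the putative maximizer. Suppose the variance, viewed as a function of $r$ with the mean $M$ and cutoff $a$ held fixed, has a global maximum at some $r_0 \in \mathbb{R}$; write $V_0 = \mathrm{Var}(x;M,r_0)_{|x \ge a}$ and $\sigma_0 = \sigma(M,r_0)$. Because $r_0$ is an interior point of $\mathbb{R}$ and the variance is differentiable there (Lemma~\ref{lemma:var_different}), the derivative in $r$ vanishes at $r_0$. The strategy is to compute that derivative from the compact expression of Lemma~\ref{lemma:var_M_r_sig}, using Lemma~\ref{lemma:dsigdr} to replace $d\sigma/dr$, and then to invoke Lemma~\ref{lemma:var_M_r_sig} a second time --- now purely as an algebraic identity at $r_0$ --- to eliminate the explicit factor $r_0$. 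What survives is a single relation among $\sigma_0$, $V_0$, and $M-a$ that forces the bound.

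Carrying this out, differentiate $\mathrm{Var} = \sigma^2 + (M-a)\,r\,\sigma - (M-a)^2$ with respect to $r$ and substitute $d\sigma/dr = -\mathrm{Var}/(M-a)$ from Lemma~\ref{lemma:dsigdr}, giving
\beq
\frac{d}{dr}\,\mathrm{Var}(x;M,r)_{|x \ge a} = (M-a)\,\sigma - r\,\mathrm{Var} - \frac{2\,\sigma\,\mathrm{Var}}{M-a}.
\eeq
Setting the right-hand side to zero at $r_0$ and clearing the denominator yields $(M-a)^2\sigma_0 - (M-a)\,r_0\,V_0 - 2\,\sigma_0 V_0 = 0$. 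Lemma~\ref{lemma:var_M_r_sig} at $r_0$ gives $(M-a)\,r_0\,\sigma_0 = V_0 - \sigma_0^2 + (M-a)^2$; since $M > a$ forces $\sigma_0 > 0$, this solves for $(M-a)\,r_0$, and substituting it into the stationarity equation and multiplying through by $\sigma_0$ collapses everything to
\beq
\sigma_0^{2}\bigl((M-a)^2 - V_0\bigr) = V_0\bigl(V_0 + (M-a)^2\bigr).
\eeq

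Now the conclusion follows by a sign argument. We have $\sigma_0^2 > 0$ and $V_0 \ge 0$ (it is a variance); the right-hand side above therefore cannot be negative, and it cannot be zero either, for that would force $\sigma_0 = 0$. Hence $V_0 > 0$, the right-hand side is strictly positive, and so the factor $(M-a)^2 - V_0$ on the left must be strictly positive as well, i.e.\ $V_0 < (M-a)^2$, which is the assertion. (Equivalently, $\sigma_0^2 = V_0\bigl(V_0 + (M-a)^2\bigr)\big/\bigl((M-a)^2 - V_0\bigr)$, which can be positive only when $V_0 < (M-a)^2$.) I expect the only real care needed to be bookkeeping: that Lemma~\ref{lemma:var_M_r_sig} may legitimately be both differentiated in $r$ --- with $\sigma$ read as the function $\sigma(M,r)$ of Eq.~(\ref{sigma_M_r}) --- and re-used as a pointwise identity, that $\sigma_0 \ne 0$ so the elimination step is valid, and that the global-maximum hypothesis indeed yields a stationary point (which it must, since $\mathbb{R}$ has no boundary). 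The algebra is short and essentially self-checking: any sign slip would leave $\sigma_0^2$ with a negative value.
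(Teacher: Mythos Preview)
Your proof is correct and follows essentially the same approach as the paper: both differentiate the compact expression of Lemma~\ref{lemma:var_M_r_sig} using Lemma~\ref{lemma:dsigdr}, invoke stationarity at the interior maximizer via Lemma~\ref{lemma:var_different}, and extract an algebraic constraint forcing $V_0<(M-a)^2$. The only difference is in the algebraic cleanup---the paper normalizes to $M=1,\,a=0$, solves the quadratic for $\sigma(r)$, and then solves for $r_0$ in terms of $V_{\max}$, whereas you eliminate $r_0$ directly to obtain $\sigma_0^2\bigl((M-a)^2-V_0\bigr)=V_0\bigl(V_0+(M-a)^2\bigr)$; your route is slightly cleaner in that it avoids the quadratic formula and the $\pm$ branching, but the underlying argument is the same.
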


\begin{proof}
From Eq.~(\ref{var_M_r_sig}) in Lemma~\ref{lemma:var_M_r_sig},
writing the variance and $\sigma$ as functions of $r$, we have:
\beq
\mathrm{Var}(x;M,r,\sigma)_{|x \ge a} = \sigma(r)^2 + (M-a) r\sigma(r) - (M-a)^2.
\label{var_M_mu_sig}
\eeq
Since the variance is proportional to $(M-a)^2$ (from Eq.~(\ref{var_M_r})),
it is sufficient to set $M =1, a=0$ and show that $\mathrm{Var}(x;M=1,r=r_0)_{|x \ge 0} < 1.$
Hence, begin with \\
$\mathrm{Var}(r) = \sigma(r)^2 + r\sigma(r) - 1$,
where $\mathrm{Var}(r)$ is shorthand for $\mathrm{Var}(x;M=1,r)_{|x \ge 0}$.

Solving for $\sigma(r)$, $\sigma(r) = -\left(r \pm \sqrt{4(\mathrm{Var}(r)+1)+r^2}\right)/2$.

Taking derivatives with respect to $r$, and since from Lemma~\ref{lemma:dsigdr} it is known that
$-\partial \sigma/\partial r = \mathrm{Var}(x;1,r)_{|x \ge 0}$
\beq
\frac{\partial \sigma}{\partial r} = -\frac{1}{2} \left(1 \pm \frac{2 \mathrm{Var}^{\prime}(r) + r}
{\sqrt{4(\mathrm{Var}(r)+1)+r^2}}\right)  = -\mathrm{Var}(r),
\eeq
where $\mathrm{Var}^{\prime}(r) = \partial \mathrm{Var}(r)/\partial r$.

Suppose that there exists a value of $r = r_0 \in \mathbb{R}$ which globally 
maximizes the variance, so that $\mathrm{Var}(r_0) = \mathrm{V_{max}}$, where $\mathrm{V_{max}}$ refers
to the maximal variance under a fixed mean. Then $\mathrm{Var}^{\prime}(r_0) = 0$, 
because by a corollary to Fermat's Theorem (\cite{FermatTheoremWiki}), 
a global maximum that occurs at a non-boundary value of $r$ 
must also occur at a stationary point, provided the function is differentiable there.
By Lemma~\ref{lemma:var_different}, $\mathrm{Var}(r)$ is differentiable at least at all finite values of $r$.
Hence,
\beq
-\frac{1}{2} \left(1 \pm \frac{r_0}
{ \sqrt{4(\mathrm{V_{max}}+1)+{r_0}^2}}\right)  = -\mathrm{V_{max}}.
\eeq
Solving for $r_0$, then,
\beq
r_0 = \pm \frac{2 \mathrm{V_{max}-1}}{\sqrt{1-\mathrm{V_{max}}}}
\sqrt{\frac{1+\mathrm{V_{max}}}{\mathrm{V_{max}}}}.
\eeq
Since $r_0 \in \mathbb{R}$, for either solution, $\mathrm{V_{max}} < 1$,
which means that $\mathrm{Var}(x;M,r=r_0)_{|x \ge a} < (M-a)^2.$
\end{proof}
\begin{theorem}
For a Gaussian probability distribution $f(x;\mu,\sigma)$ with mean $M$,
over the interval $x \in [a,\infty)$, $ a < M$, or else over the interval
$x \in (-\infty,a]$, $ a > M$,
\begin{singlespace}
\bdm
\mathrm{Var}(x)_{|x<a \oplus x>a} < (M-a)^2.
\edm
\end{singlespace}
\label{theorem:main}
\end{theorem}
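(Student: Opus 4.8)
The plan is to assemble Theorem~\ref{theorem:main} almost entirely from the six lemmas, treating the left-truncated case $x \in [a,\infty)$ with $a < M$ first and then invoking a reflection argument for the right-truncated case. The strategy is a standard extreme-value/compactification argument: we want to bound $\sup_{r \in \mathbb{R}} \mathrm{Var}(x;M,r)_{|x \ge a}$, and since $r$ ranges over the non-compact set $\mathbb{R}$, the supremum is either attained at a finite critical point or approached only in the limits $r \to \pm\infty$. Lemmas~\ref{lemma:limit_var} and~\ref{lemma:limit_var_pos_inf} pin down those two limiting values as $(M-a)^2$ and $0$ respectively; Lemma~\ref{lemma:var_different} guarantees differentiability at every finite $r$; and Lemma~\ref{lemma:glob_max} shows that \emph{if} the supremum is attained at some finite $r_0$, then its value is strictly less than $(M-a)^2$. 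Combining these should give the strict inequality.

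Concretely, first I would normalize by setting $M = 1$, $a = 0$ as in the proof of Lemma~\ref{lemma:glob_max}, justified because Eq.~(\ref{var_M_r}) exhibits $\mathrm{Var}$ as $(M-a)^2$ times a function of $r$ alone. Write $V(r) := \mathrm{Var}(x;1,r)_{|x \ge 0}$, a function continuous and differentiable on all of $\mathbb{R}$, with $V(r) > 0$ everywhere (it is a genuine variance of a non-degenerate distribution), $\lim_{r\to-\infty} V(r) = 1$, and $\lim_{r\to+\infty} V(r) = 0$. Let $S = \sup_{r\in\mathbb{R}} V(r)$. Since $V \ge 0$ and $V \to 1$ as $r \to -\infty$, we have $S \ge 1$. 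Now argue by cases on whether this supremum is attained. If $S$ is attained at some finite $r_0$, then $r_0$ is a global maximum at a non-boundary point of the differentiable function $V$, so Lemma~\ref{lemma:glob_max} applies and forces $S = V(r_0) < 1$, contradicting $S \ge 1$ unless in fact no finite maximizer exists. Hence the supremum is \emph{not} attained at any finite $r$. If $S$ were attained only ``at infinity,'' the only candidate limit values are $1$ (as $r\to-\infty$) and $0$ (as $r\to+\infty$), so $S = 1$. Either way $S \le 1$, and since the supremum is not attained, $V(r) < 1$ for every $r \in \mathbb{R}$, i.e.\ $\mathrm{Var}(x;M,r)_{|x \ge a} < (M-a)^2$ for all real $r$. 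Because every Gaussian with the given mean and cutoff corresponds to some real $r = (\mu-a)/\sigma$, this is the claimed bound.

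For the right-truncated case $x \in (-\infty,a]$ with $a > M$, I would apply the reflection $x \mapsto 2a - x$ (equivalently $\mu \mapsto 2a - \mu$), which is variance-preserving, maps $(-\infty,a]$ to $[a,\infty)$, and sends the mean $M$ to $2a - M > a$; applying the left-sided result with $M' = 2a-M$ and noting $(M'-a)^2 = (M-a)^2$ yields $\mathrm{Var}(x)_{|x \le a} < (M-a)^2$. The only subtlety needing a word of care is the logical structure of the "not attained" step: I must be sure that $V$, being continuous on $\mathbb{R}$ with finite limits at $\pm\infty$, genuinely has $\sup V = \max(\limsup_{r\to-\infty}V, \limsup_{r\to+\infty}V, \sup_{\text{finite critical pts}} V)$ — this is just the observation that a continuous function on $\mathbb{R}$ with finite limits at both ends attains its sup either at an interior critical point or approaches it along one of the two tails. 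The main obstacle, such as it is, is purely expository: making the case analysis airtight so that Lemma~\ref{lemma:glob_max}'s conditional conclusion ("if attained at finite $r_0$, then $< (M-a)^2$") combines cleanly with the limit lemmas to rule out $S > 1$ while simultaneously ruling out that the bound $(M-a)^2$ is ever achieved. No hard new computation is required; the lemmas do all the analytic work.
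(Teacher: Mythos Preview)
Your proposal is correct and follows essentially the same approach as the paper: both arguments combine the two limit lemmas (Lemmas~\ref{lemma:limit_var} and~\ref{lemma:limit_var_pos_inf}) with Lemma~\ref{lemma:glob_max} in a case analysis on whether the supremum is attained at a finite $r$, and both handle the right-truncated case by symmetry/reflection. Your write-up is a bit more explicit about the compactification logic (continuous function on $\mathbb{R}$ with finite limits at both ends) and about the reflection map $x \mapsto 2a-x$, but the substance is identical.
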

\begin{proof}
The maximum or supremum of $\mathrm{Var}(x;M,r)_{|x \ge a}$ occurs either at
A) $r \rightarrow -\infty$, B) $r \rightarrow +\infty $, 
or C) a finite value of $r$ that can be called $r_0$.
Since from Lemma~\ref{lemma:limit_var}, 
$\lim_{r \to -\infty} \mathrm{Var}(x;M,r)_{|x \ge a} = (M-a)^2$
and since from Lemma~ \ref{lemma:limit_var_pos_inf}, $\lim_{r \to +\infty} \mathrm{Var}(x;M,r)_{|x \ge a} = 0$,
it is clear that B) is false.  This means either A) or C) is true, or both. 

By Lemma~\ref{lemma:limit_var}, it is clear that if A) is true, $\sup \mathrm{Var}(x)_{|x \ge a} = (M-a)^2$.
By Lemma~\ref{lemma:glob_max}, even if C) were true, the 
maximum of the variance would be $ < (M-a)^2$. Since it is known that the variance
approaches $(M-a)^2$ as $r \rightarrow -\infty$, this would imply $\sup \mathrm{Var}(x)_{|x \ge a} = (M-a)^2$.
Hence, A) is true, and whether C) is true or not, $\sup \mathrm{Var}(x)_{|x \ge a} = (M-a)^2$.
By symmetry, then, $\sup \mathrm{Var}(x)_{|x \le a} = (M-a)^2$, for $M < a$, as well.
\end{proof}
\subsection{Related observations}
\label{subsec:related_obs}
\begin{itemize}[label={\scriptsize$\bullet$}]
\item Some other relevant limits are:
\beq
\mathrm{From~Eq.~(\ref{var_M_r})}:~\lim_{r \to 0} \mathrm{Var}(x;M,r)_{|x \ge a} = (M-a)^2(\pi-2)/2.
\label{lim_var_r_to_0}
\eeq
%\begin{flushleft}
\beq
\mathrm{From~Eq.~(\ref{sigma_M_r})}:~\lim_{r \to 0} \sigma(M,r)_{|x \ge a} = (M-a) \sqrt{\pi/2},
\label{lim_sigma_r_to_0}
\eeq
\bdm
\noindent \lim_{r \to -\infty} \sigma(M,r)_{|x \ge a} = +\infty \,\text{,~ and }\lim_{r \to \infty} \sigma(M,r)_{|x \ge a} = 0.
\edm
Noting that $\mu = r \sigma + a$, from Eq.~(\ref{sigma_M_r}), then  
%\end{flushleft}
\bdm
\lim_{r \to -\infty} \mu(M,r)_{|x \ge a} = -\infty, ~\lim_{r \to \infty} \mu(M,r)_{|x \ge a} = M,
\edm
\beq
\text{and}~\lim_{r \to 0} \mu(M,r)_{|x \ge a} = a.
\label{lim_mu_r_to_0}
\eeq
These limits hold for any arbitrary Gaussian distribution with a finite mean.
\item
From Eq.~(\ref{sigma_M_r}), 1st-order Laurent series expansion shows that for large, positive $r$, 
\beq
\sigma \sim \frac{M-a}{r}\left(1-\frac{e^{-r^2/2}}{\sqrt{2\pi} r}\right) \sim \frac{M-a}{r},
\label{sigma_sim_M_r}
\eeq
and for a similar expansion of $\mu = r \sigma + a$ for large, positive $r$,\\
$\mu \sim M - (M-a) e^{-r^2/2}/\sqrt{2\pi} r \sim M$.
Strictly, $\mu < M$ in all but the limiting case of $r \rightarrow \infty$,
as was suggested by the results of the numerical calculations shown in
 Figure~\ref{fig:sigma_var_vs_mu_grid}
and Table \ref{tab:mu_sig_from_r}.

\item In the case of $a=0$, Equations (\ref{lim_var_r_to_0}), (\ref{lim_sigma_r_to_0}), 
and (\ref{lim_mu_r_to_0}) 
correspond to the half-normal distribution (\cite{HalfNormalWiki}), $f(x;\mu = 0,\sigma)_{|x \ge 0}$,
and substituting in the well-known result of $M(x)_{|x \ge 0} = \sigma \sqrt{2/\pi}$, 
we recover $\sigma$ from Eq.~(\ref{lim_sigma_r_to_0}), as well 
as the standard result of $\mathrm{Var}(x)_{|x \ge 0} = (\pi-2)\sigma^2/\pi$ 
from Eq.~(\ref{lim_var_r_to_0}).

\item As illustrated in Figure~\ref{fig:3Gaussians_and_family}, Panel (b),
Gaussian probability density functions over $x \in [0,\infty)$ 
with mean $M=1$ and $r \le 0$ pass through a point close to $(\frac{1}{2},\frac{1}{\sqrt{e}})$. 
In general, for 
Gaussian pdfs over this interval, there is a cluster of points around $(\frac{M}{2},\frac{1}{M \sqrt{e}})$
as a function of $r\le 0$,
because the curves are rather flat in $r$ in this region and 
$\lim_{r \to -\infty} f(x;M,r)_{|x \ge 0} = e^{-x/M}/M.$

\item Since $\sigma = (\mu -a)/r$, from the relation in Lemma~\ref{lemma:var_M_r_sig}, Eq.~(\ref{var_M_r_sig}), 
\beq
\mathrm{Var}(x;M,\mu,\sigma)_{|x \ge a} = 
\sigma^2 + (M-a)(\mu-a) - (M-a)^2,
\label{var_M_sig}
\eeq
provided that $\sigma,\mu$ and $M$ are consistent, or congruent. This equation is used below 
(Section~\ref{sec:approx_sigma_of_mu}) in developing the $\sigma(\mu)$ approximations. 
It can be seen from this relation that as $\mu \rightarrow M$ (i.e., for large, positive $r$), 
$\mathrm{Var}(x)_{|x \ge a}$ approaches $\sigma^2$. From Eq.~(\ref{var_M_r}), $\mathrm{Var}(x)_{|x \ge a}$ 
approaches $(M-a)^2/r^2$ for large, positive $r$ (see also Lemma~\ref{lemma:limit_var_pos_inf} above),
and together, these relations are consistent with Eq.~(\ref{sigma_sim_M_r}).
\end{itemize}

\subsection{Right-sided truncation}
\label{subsec:right_hand_trunc}
For the case of truncation of the right side of the Gaussian--i.e., with $x \in (-\infty,a]$ and 
$a > M$, the relations are similar to those detailed above for left-sided truncation.
The expressions for the mean, $M(x;\sigma,r)_{|x \le a}$, the second moment, 
$M_2(x;\sigma,r)_{|x \le a}$ and $\sigma(M,r)_{|x \le a}$
are identical to their left-sided truncation ($x \ge a$) counterparts
shown in Eqs.~(\ref{mean_sig_r}), (\ref{sigma_M_r}), and (\ref{M_2_sig_r}),
 except that $\xi(r)$ is replaced by $\xi^\dag(r) = \erf\!\left(r/\sqrt{2}\right)- 1$. 
For the variance, $\mathrm{Var}(x;M,r)_{|x \le a} = \mathrm{Var}(x;M,-r)_{|x \ge a}$, from 
Eq.~(\ref{var_M_r}), so it is monotonically increasing in $r$. 
The limits given in Section~\ref{subsec:related_obs} are likewise reversed with respect to $r$. For example, 
\bdm
 \lim_{r \to +\infty} \mathrm{Var}(x;M,r)_{|x \le a} = 
\lim_{r \to -\infty} \mathrm{Var}(x;M,r)_{|x \ge a} = (M-a)^2,
\edm
\bdm
\mathrm{and}~~~\lim_{r \to -\infty} \mu(M,r)_{|x \le a} =
\lim_{r \to +\infty} \mu(M,r)_{|x \ge a} = M.
\edm
Eq.~(\ref{var_M_sig}) holds regardless of the side of the truncation.
\section{Dependence of variance on truncation threshold}
\label{sec:max_var_threshold}
From Figure~\ref{fig:3Gaussians_and_family}, Panel (b),
it can be seen that what is typically referred to as
a Gaussian distribution of positive-valued data--i.e., without a large truncation of
the left tail of the Gaussian--has an $r$ value of about 2 or more. From Eq.~(\ref{var_M_r})
or Table \ref{tab:mu_sig_from_r},  
this means that in practice
the maximal variance of positive-valued Gaussian datasets is often $\approx 0.2M^2$ (maximal
standard deviation\footnote{More precisely, for a crossing value threshold of
$5\%$ of the curve's maximum ($f(0) = 0.05 f_{\!_{M}}$), the $r$ value is $\approx 2.448$, implying a maximal
variance of $\approx 0.152 M$ or a maximal s.d. of $\approx 0.39M$. See below.} of $\approx 0.45M$).
For a fixed mean and truncation boundary, 
the smaller the truncation of the left tail, the higher the $r$ value and the lower 
the variance. 
Thus, for a given mean over the interval $x \in [a,\infty)$, the truncation threshold specifies
a minimum $r$ value, which in turn specifies a maximal possible variance for the Gaussian distribution.
This is demonstrated as follows. 

\begin{figure}[h]
\vspace{-1.25in}
\hspace{-1.0cm}
\includegraphics[width=7in]{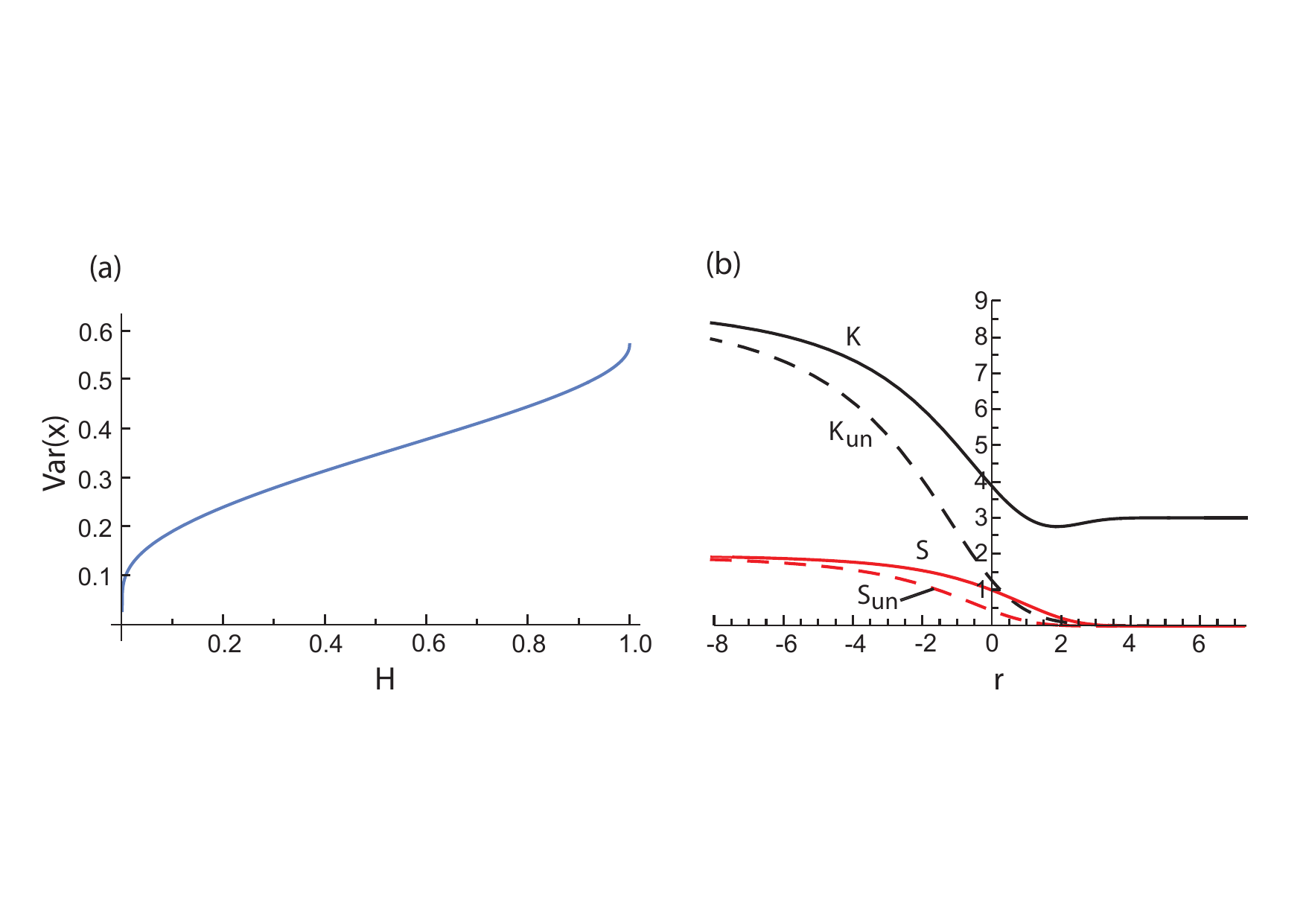}
\vspace{-1.5in}
\caption{\small Plots of the variance vs. boundary height and higher central moments vs. the $r$ value. 
{\bf{Panel (a)}}: the variance, $\mathrm{Var}(x)_{|x \ge 0}$, plotted as a function of the relative
height, $H$, at the truncation boundary for a Gaussian with fixed mean, $M(x)_{|x \ge 0}=1$,
from Eq.~(\ref{var_M_H}). {\bf{Panel (b)}}: higher central moments (CMs) of UTGDs.  The 3rd (red) 
and 4th (black) CMs are plotted as a function of $r$. The solid lines indicate the standardized
CMs (conventional skewness and kurtosis), and the dotted lines the unstandardized
or unnormalized CMs assuming $M=1, a=0$. 
Note K has a minimum below the canonical value of 3 at $r \approx 1.87$.
 K--kurtosis; K\textsubscript{un}--unnormalized kurtosis;
S--skewness; S\textsubscript{un}--unnormalized skewness.
}
\label{fig:var_M_H_and_CM}
\end{figure}
An arbitrary Gaussian function crosses the truncation boundary $x=a$ at a value of 
$f(a;\mu,\sigma) = f_{\!_{M}} e^{-((a-\mu)/\sigma)^2/2}$,
and the maximum value of the distribution is $f(\mu) = f_{\!_{M}}$.
The ratio of the crossing value to the maximal value, which can be called 
$H = f(a)/f(\mu) = e^{-((a-\mu)/\sigma)^2/2} = e^{-r^2/2}$, since $u = r \sigma + a$. Then,
\beq
r = \pm \sqrt{2 \ln(1/H)},
\label{eq_r_H}
\eeq
which are the two possible $r$ values corresponding to $H$.
Because truncating only the left tail is of interest in this section, 
and because the truncation occurs on the right tail when $\mu<a$, 
only the positive branch of the $r$ solutions is considered here (since $r = (\mu-a)/\sigma$).
Substituting $r$ from Eq.~(\ref{eq_r_H}) into the expression for the variance (Eq. \ref{var_M_r}), then
\beq
\mathrm{Var}(x; M,H)_{|x \ge a} = 
\frac{(M-a)^2\left(\pi {\xi_{_H}}^2 - 2 H \xi_{_H} \sqrt{\pi \ln(1/H)} - 2H^2\right) }
{2\left(\xi_{_H}\sqrt{\pi \ln(1/H)} +H\right)^2}
\label{var_M_H},
\eeq
where $\xi_{_H} = \erf(\sqrt{\ln(1/H)}) + 1$.  Hence, Eq.~(\ref{var_M_H}) gives the
maximal variance over $x \in [a,\infty)$ that can be achieved with any Gaussian that
crosses the truncation boundary ($x=a$) at or below $f(a)/f_{\!_{M}} = H$. Since, by definition,
$0< H \le 1$, the relation is always well-defined.
The plot is shown in Figure~\ref{fig:var_M_H_and_CM}, Panel (a),
for the case of $M=1,a=0$. The maximal variance ($(\pi-2)/2$) occurs at $H=1$, 
which corresponds to $r=0, \mu=0$.
\section{Approximating $\sigma(M,\mu)$ and $\mathrm{Var}(x;M,\mu)$ in UTGDs}
\label{sec:approx_sigma_of_mu}
\subsection{General method}
\label{subsec:approx_sig_method}
From the expression for $\sigma(M,r)_{|x \ge a}$ in Eq.~(\ref{sigma_M_r}) and
the fact that $\mu = r \sigma + a$, it is clear that, at least over particular intervals of $\mu$, 
the parameter $\sigma$ as well as $\mathrm{Var}(x)$ 
can be expressed approximately as strict functions of $\mu$, $M$ and $a$. 
This can be useful, for example, in modeling an unknown truncated Gaussian data
 distribution with a known mean and variance, because it reduces the problem 
from two dimensions ($\{\mu,\sigma\}$ space) to one ($\mu$ space), and
$\mu$ can usually be determined easily by recursion. 

The general approach is the following.
Dividing Eq.~(\ref{var_M_sig}) through by  $(M-a)^2$,
\beq
\frac{\mathrm{Var}(x;M,\mu,\sigma)_{|x \ge a}}{(M-a)^2} = 
\left(\frac{\sigma(\mu)}{M-a}\right)^2 + (\mu-a)/(M-a) - 1,
\label{var_norm_sig_M_a}
\eeq 
where $\sigma(\mu)$ is a shorthand for $\sigma(M,\mu)_{|x \ge a}$. 
Since a closed-form analytic expression for $\sigma(\mu)$ cannot be found,
we instead find a function $\tilde{\sigma}(\mu)$ approximating $\sigma(\mu)$
such that 
\beq
\sigma(\mu) \approx \tilde{\sigma}(\mu) = (M-a) \cdot \tilde{\sigma}\!\left(\frac{\mu-a}{M-a}\right),
\label{sigma_mu_U}
\eeq
so that the argument of the $\tilde{\sigma}()$ function on the right is the shifted and scaled $\mu$ parameter.
Letting $U = \frac{\mu-a}{M-a}$, then, from Eqs.~(\ref{var_norm_sig_M_a}) and (\ref{sigma_mu_U}), 
\beq
\frac{\mathrm{Var}(x;M,U)_{|x \ge a}}{(M-a)^2} \approx \tilde{\sigma}\!\left(U\right)^2 + U -1.
\label{var_sig_U}
\eeq
This schema ensures that the parameterizations of the $\tilde{\sigma}(\mu)$ functions are scale-invariant. 
In addition, by effectively transforming the approximations to the $M=1$, $a=0$ case, 
they increase numerical stability by reducing precision-related errors when the parameter
values are very small or large.
From Eq.~(\ref{var_sig_U}), $U$ can usually be determined by recursion (or 1-D search), 
and then $\sigma \approx  (M-a) \cdot \tilde{\sigma}(U)$ and $\mu \approx (M-a)\cdot U +a$.

\begin{table}[h]
  \centering
  \begin{minipage}[t]{0.48\textwidth}
    \centering
\begin{tabular}{@{\hspace{1em}}cr@{\hspace{1em}}}
\hline
\hline
\multicolumn{1}{c}{parameter} & \multicolumn{1}{c}{value} \\
\hline
I) & \\
$c_1$ &  0.8388698504360610\\
$c_2$ & -0.0079952671525833\\
$c_3$ & -0.0000323315551536\\
\hline
II) &\\
$c_1$ &  0.8458237100024218\\
$c_2$ & -0.0076717015064936\\
$c_3$ & -0.0000294382245497\\
\hline
$d_1$ &  0.3625552742358368\\
$d_2$ &  0.0015595446175739\\
$d_3$ &  0.0000206350887888\\
\hline
\end{tabular}
\caption{\small Parameters for Eq.~(\ref{sigma_exp_approx}) and (\ref{var_approx}).
Two alternative parameters sets for $\{c_1,c_2,c_3\}$ are listed.
In I) the fit was done on all 3 parameters by least squares.
In II) $c_1$ was fixed at $\ln(2/(4-\pi))$,
which gives the exact variance and $\sigma$ values at $\mu=0$, and $c_2,c_3$
were fit by least squares.}
\label{tab:c_and_d_param}
  \end{minipage}
  \hfill
  \begin{minipage}[t]{0.48\textwidth}
    \centering
\begin{tabular}{@{\hspace{1em}}cr@{\hspace{1em}}}
\hline
\hline
\multicolumn{1}{c}{parameter} & \multicolumn{1}{c}{value} \\
\hline
\multicolumn{2}{c}{$\mathrm{V^\ddagger_{max}}_{,n}(r)$} \\
\hline
$d_1$  &  0.005395899517140 \\
$d_2$  &  0.044337051307607 \\
$d_3$  &  1.360279573341640 \\
\hline
\multicolumn{2}{c}{$n_{\mathrm{vmx}}(r)$} \\
\hline
$c_1$  &  0.355590614404546 \\
$c_2$  &  2.616552453455175 \\
$c_3$  &  0.087938290974657 \\
\hline
\end{tabular}
\caption{\small Parameter values for estimation of the maximal variance, $\mathrm{V^\ddagger_{max}}_{,n}(r)$
(top panel, Eq. \ref{Vmax_approx})
and the corresponding dimensionality $n_{\mathrm{vmx}}(r)$ (bottom panel, Eq. \ref{nvmx_approx}),
as a function of the $r$ parameter,
for scaled chi distributions over the interval $R \in [a,\infty)$. See Section~\ref{subsec:inner_truncation}
and Figure~\ref{fig:Vmax_approx_comb}, Panel (a) in Appendix (Section~\ref{subsec:approx_max_var_inner}).}
\label{tab:par_nmax_eq}
  \end{minipage}
\end{table}

\subsection{Approximating function 1}
\label{subsec:approx_sigma_fx_1}
Over the interval of approximately $U \in [-100,0.9]$:
\beq
\frac{\sigma(\mu)_{|x \ge a}}{M-a} \approx \tilde{\sigma}(U)_{|x \ge a} = \sqrt{2-e^{-\alpha (1-U)^{\beta}} - U}, 
\label{sigma_exp_approx}
\eeq
where $\alpha= c_1 + c_2 U + c_3 U^2$ and 
$\beta= d_1 + d_2 U + d_3 U^2$.
Two sets of $c$ and $d$ parameters are listed in Table~\ref{tab:c_and_d_param}. They were determined 
by performing separate least-squares fits of $\alpha$ and $\beta$ values, generated for pairs
 of $(\mu,\sigma)$ ordered pairs, to the quadratics in $\mu$.
Substituting Eq.~(\ref{sigma_exp_approx}) into  Eq.~(\ref{var_sig_U}), the variance is found to be
\beq
\mathrm{Var}(x;M,U)_{|x \ge a} \approx (M-a)^2\left(1 - e^{-\alpha (1-U)^{\beta}}\right).
\label{var_approx}
\eeq
The resulting approximations (using parameter set II) 
are compared to the exact results in Figure~\ref{fig:approx_figure}, Panel (a),
over the interval $\mu \in [-10,0.9]$, for $M=1$, $a=0$ ($U = \mu$).
Over this  interval the maximal fractional errors (in absolute value) in $\sigma$ and
$\mathrm{Var}(x)_{|x \ge 0}$ are less than 0.5\% and 2.4\%, respectively.
The absolute and fractional errors over the interval 
$U \in [-10,0.9]$ are shown in Figure~\ref{fig:approx_figure}, Panel (b),
The fractional errors are scale-invariant to within precision-related error. 
Rearranging Eq.~(\ref{var_approx}) gives the following expression for 
determining $U$ by recursion:
\beq
 U = 1- \left(-\frac{\ln \left(1 -\hat{\mbox{V}} \right)}{\alpha}\right)^{1/\beta},
\label{mu_exp_approx}
\eeq
where $\hat{\mbox{V}} = \mathrm{Var}(x;M,U)_{|x \ge a}/(M-a)^2 = 1 - e^{-\alpha (1-U)^{\beta}}$ 
is the normalized variance.

\begin{figure}[H]
\hspace{-1.5cm}
\includegraphics[width=7in]{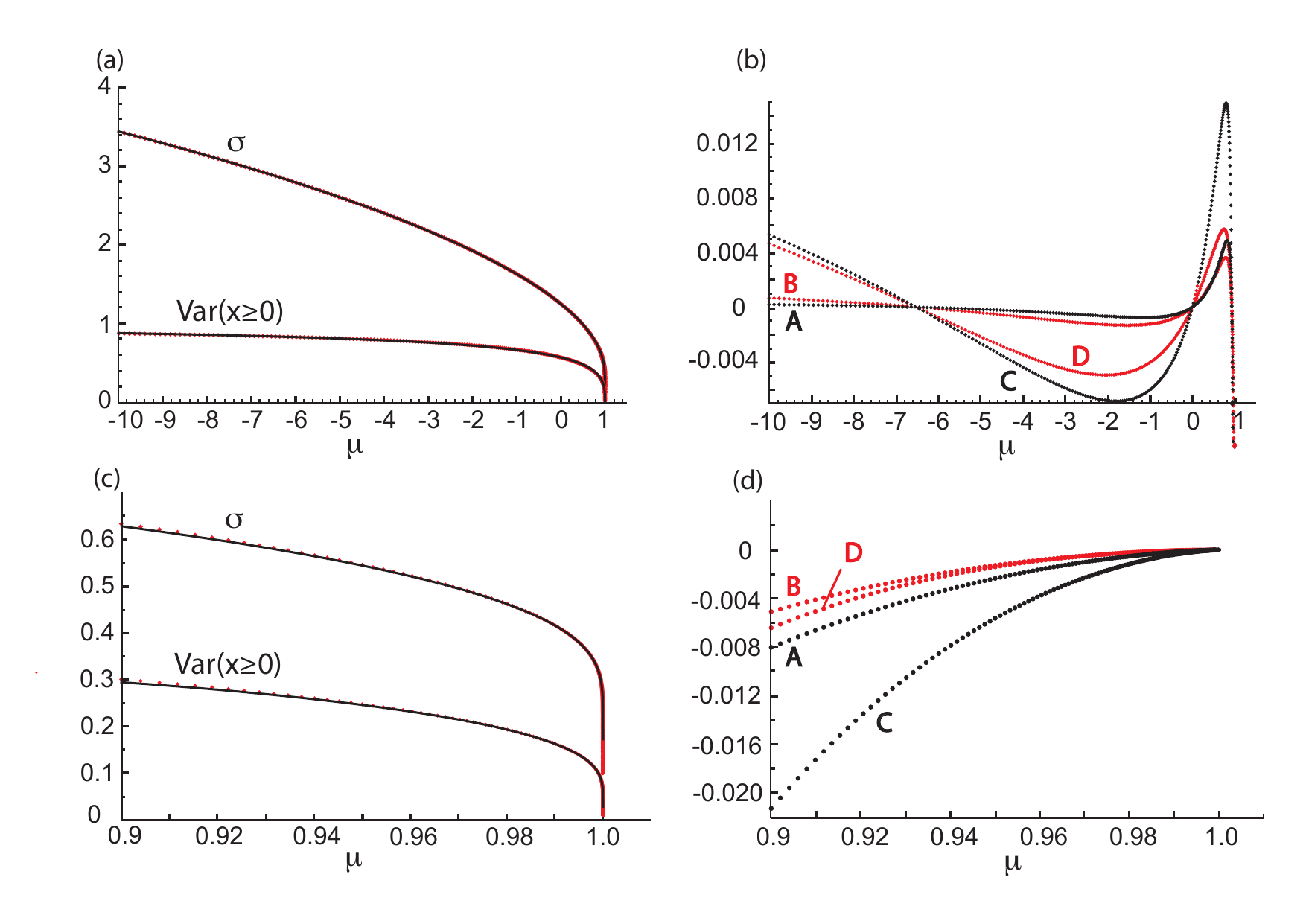}
\caption{\small Plots of parameterization results for UTGDs obtained with approximating functions.
{\bf{Panel (a)}}:  Plots of the variance, $\mathrm{Var}(x)_{|x \ge 0}$, and $\sigma$ as a function of the $\mu$
parameter for Gaussian distributions having a fixed mean (of 1) over the interval $x \in [0,\infty)$,
using approximating function 1 (given by Eq.~(\ref{sigma_exp_approx})).
The red dots represent the exact values (calculated with Eqs~(\ref{sigma_M_r}) and~(\ref{var_M_r})),
and the black curves the functional approximation.
{\bf{Panel (b)}}: Plot of the errors in the data shown in Panel (a).
The black plots (A and C) represent errors in $\mathrm{Var}(x)_{|x \ge 0}$,
while the red plots (B and D) represent errors in $\sigma$. Curves A and B are the differences,
calculated as (approximate - exact). Curves C and D are the fractional differences, calculated
as ((approximate-exact)/exact). Note the change in scale relative to the prior figure. 
{\bf{Panel (c)}}: Results for approximating function 2, given by Eq.~(\ref{approx_Lambert}), for Gaussian
distributions with a mean of 1 ($M=1$) over $x \in [0,\infty)$. 
{\bf{Panel (d)}}: Errors in the results shown in Panel (c)--i.e.,  
approximations obtained with
Eq.~(\ref{approx_Lambert}). The plots in Panels (c) and (d) are otherwise as described above.
}
\label{fig:approx_figure}
\end{figure}

\subsection{Approximating function 2}
\label{subsec:approx_sigma_fx_2}
Approximating function 1 becomes less accurate for $U > 0.9$ .
For $U \in [0.9,1]$,
a different approximation for $\sigma(U)$ can be derived by considering large, positive values of $r$.
Second-order series expansions of $\sigma(M,r)_{|x \ge a}$ from Eq.~(\ref{sigma_M_r})
around $r \rightarrow +\infty$, result in
\bdm
\sigma(M,r)_{|x \ge a} \approx \frac{(M-a)\left(\sqrt{2/\pi}e^{-r^2/2}-2r\right)} {2r^2}.
\edm
Substituting $r = (\mu -a)/\sigma$ and $\mu = (M-a) \cdot U + a$ here and solving for $\sigma$, one obtains
\beq
\frac{\sigma(\mu)_{|x \ge a}}{M-a} \approx 
\tilde{\sigma}(U)_{|x \ge a} = 
\frac{U}{\sqrt{W\!\!\left(\frac{1}{2\pi(1-U)^2}\right)}},  ~~\mbox{where}
\label{approx_Lambert}
\eeq
$W()$ is the Lambert W function. Since $U<1$, this is always well-defined for finite $U$. 

The results are shown in Figure~\ref{fig:approx_figure}, Panel (c), for the case of $M=1$ and $a=0$. 
The errors over the interval $\mu \in [0.9,1]$ 
are all within 0.9\% of the exact values 
for $\sigma$ and within 2.4\% for $\mathrm{Var}(x)$. Figure~\ref{fig:approx_figure}, Panel (d),
shows the errors in the approximation. As expected, the maximal errors 
occur at $\mu=0.9$ and diminish in absolute magnitude for higher values of $\mu$. 
This is the case until approximately $\mu > 1-10^{-10}$, after which there 
are increasing precision-related errors (not shown). At $\mu = 1 - 10^{-9}$,
the fractional errors for $\sigma$ and $\mathrm{Var}(x)_{|x \ge a}$ are approximately 
$10^{-9}$ and $2 \times 10^{-9}$ in absolute value, respectively. 
 
As with other functions approximating $\sigma(M,\mu)$, the Lambert approximation
shown above can be used together with Eq.~(\ref{var_sig_U})
to determine $U$, and therefore $\mu$, by recursion, given the variance, 
$M$, and $a$, as
\beq
U  = \frac{\mathrm{Var}(x)_{|x \ge a}}{(M-a)^2} + 1 -\tilde{\sigma}(U)^2.
\label{general_recur_mu}
\eeq 
Both approximating functions are differentiable.

\section{Moment-intersecting approximations of $\sigma$ and $\mu$}
\label{sec:two_point_approx}
An alternative approach to parameterizing these distributions can be called {\it{moment-intersecting approximation}}
and is related to the more general Method of Moments (\cite{Hazelton2011,MethodofMomentsWiki}).
The current method follows from the
observation that the $\sigma$ values derived from the Form I and Form II variances, under fixed mean,
cutoff value and variance, generally differ. They only converge at a single point on the
$(\mu,\sigma)$ plane, which can be called $(\mu_{_0},\sigma_{_0})$. This occurs because the Form I variance
does not explicitly depend on the mean and cutoff value, while Form II does, and hence by fixing those parameters,
we change the dependence of Form II on $\sigma$ and $\mu$ relative to Form I. 
It is shown in the Appendix (Section~\ref{subsec:dsigma_dmu})
that the slopes of $\sigma(\mu)$ under fixed variance for Forms I and II, which we can call $d\sigma\!_{_1}/d\mu$ and 
$d\sigma\!_{_2}/d\mu$, 
generally differ.
In addition, $d\sigma\!_{_{1}}/d\mu$ under constant variance is always constant (see Appendix, Section
 \ref{subsec:dsigma_dmu}), and $d\sigma\!_{_{2}}/d\mu$ is often close to constant. Thus, in general, the 
coordinates of the point of intersection, $(\mu_{_0},\sigma_{_0})$ can be well-approximated
by solving a pair of linear equations. One approach uses two points for each form and the other, one point and the slope. 
\subsection{Two-point method}
In the two-point approach, $\mu_{_0}$ may be found as:
\beq
\mu_{_0} = \frac{\mu_{_1}(\sigma_{_{2,2}}-\sigma_{_{1,2}}) - \mu_{_2}(\sigma_{_{2,1}}-\sigma_{_{1,1}})}
{\sigma_{_{2,2}} - \sigma_{_{2,1}} - (\sigma_{_{1,2}}-\sigma_{_{1,1}})}, 
\label{two-point-MI-formula}
\eeq
where $\sigma_{_{1,1}}$ and $\sigma_{_{1,2}}$ are determined recursively from Form I of the variance
at points $\mu_{_1}$ and $\mu_{_2}$. Likewise, $\sigma_{_{2,1}}$ and $\sigma_{_{2,2}}$ are determined
from Form II at the same $\mu$ values. 
The spread parameter value can then be found as 
$\sigma_{_0} = \sigma_{_{1,1}} +(\mu_{_0}-\mu_{_1})(\sigma_{_{1,2}}-\sigma_{_{1,1}})/(\mu_{_2}-\mu_{_1})$. 

\subsection{Point-slope method}
In a similar method, the slopes, $d\sigma_{_1}/d\mu$ and $d\sigma_{_2}/d\mu$, corresponding to Forms I and II of the variance,
can be calculated explicitly at a single point $\mu_{_1}$, as given in the Appendix (Section~\ref{subsec:dsigma_dmu}) and then
\beq
\mu_{_0} = \frac{ \sigma_{_{2,1}} - \sigma_{_{1,1}} + \mu_{_1} \left( \frac{d\sigma_{_1}}{d\mu}- \frac{d\sigma_{_2}}{d\mu} \right) 
} 
{\left(\frac{d\sigma_{_1}}{d\mu}- \frac{d\sigma_{_2}}{d\mu}\right)}.
\label{mu_zero_by_slopes}
\eeq
The spread parameter can then be determined as $\sigma_{_0} = \sigma_{_{1,1}} + (\mu_{_0}-\mu_{_1})\cdot d\sigma_{_1}/d\mu$.

Both of these linear approximations are more accurate the closer the trial $\mu$ 
values ($\mu_{_1}$ and $\mu_{_2}$) are to $\mu_{_0}$. They are also more accurate the lower the $r$ value,
because $\sigma(\mu)$ tends to be closer to linear there (see, e.g., 
Figure~\ref{fig:sigma_var_vs_mu_grid} and Figure~\ref{fig:approx_figure}, Panel a).
Successive approximation using these methods tends to rapidly converge to very accurate results.
Examples are given under Section~\ref{sec:example_apps} and in the Appendix (Section~\ref{log_normal_example}).
Although it is not explored here, the 
methods can be modified to use the mean instead of Form I of the variance, and possibly other moments as well.
\section{Skewness and kurtosis of UTGDs}
\label{sec:skewness_kurtosis}
The 3rd and 4th (raw) moments of $f(x;\mu,\sigma)_{|x > a}$ are
\bdm
M_3 = 3\sigma^2\mu + \mu^3 + 
\frac{\sqrt{2/\pi} \sigma e^{-\frac{(a-\mu)^2}{2\sigma^2}} (2\sigma^2 + \mu^2 +a \mu + a^2)}
{\xi(r)},
\edm
\bdm
\mathrm{and}~~~M_4 = \mu^4 + 3\sigma^4+ 6\sigma^2\mu^2 + \sqrt{2/\pi}\sigma e^{-\frac{(a-\mu)^2}{2\sigma^2}}
\left( \frac{5\sigma^2\mu + \mu^3 +a^2\mu + a^3 + a(3 \sigma^2 + \mu^2)}
{\xi(r)}\right).
\edm
The unnormalized skewness and kurtosis are the 3rd and 4th central moments:\\
$\mathrm{S_{un}} = \mathrm{E}[(x-M)^3] = M_3 - 3M M_2 + 2M^3$ ~and\\
$\mathrm{K_{un}} = \mathrm{E}[(x-M)^4] =  M_4 -4M M_3 + 6M^2 M_2 -3M^4$, respectively.

Substituting in the individual moments, then $u = r\sigma +a$ and finally the expression for $\sigma$ from Eq.~(\ref{sigma_M_r}),
the results are
\bdm
\mathrm{S_{un}} =  \left(\frac{M-a}{r B + 2}\right)^3  2 ((r^2-1) B^2 + 6 r B +8),
\edm
\bdm
\mathrm{and}~~~~\mathrm{K_{un}} =  \left(\frac{M-a}{ rB + 2}\right)^4 
\left(3 B^4 - 2r(r^2+3) B^3 - 8 (2r^2+1) B^2 -48 r B - 48\right),
\edm
where the shorthand of $B = \sqrt{2\pi}e^{r^2/2} \xi(r)$ is used. 
%(\erf\!\left(r/\sqrt{2}\right)+ 1)$ is used.
Since $rB = -2$ only as $r \rightarrow -\infty$, the expressions are well-defined for all $r \in \mathbb{R}$.
They can be shown to be equivalent to those given by Horrace (\cite{Horrace2015}) when $a=0$.
In their current forms, however, note that both $\mathrm{S_{un}}$ and $\mathrm{K_{un}}$, like the variance, can be
written as $\overline{M}{_k} = \left(\frac{M-a}{rB+2}\right)^k P(r,B)$,
where $\overline{M}{_k}$ is the $k$th central moment and $P(r,B)$ is a
polynomial in $r$ and $B$, and, although this is not proven here, it seems to hold for all (unnormalized) 
central moments of order 2 or greater.
The expressions for the 5th and 6th central moments are given in Supplementary Material 
(Section \ref{subsec:5th_6th_CM}).

Normalizing as $\mathrm{S} = \mathrm{S_{un}}/\mathrm{Var}(x)^{3/2}$ and $\mathrm{K} = 
\mathrm{K_{un}}/\mathrm{Var}(x)^2$, one obtains expressions for the conventional skewness and kurtosis:
\beq
\mathrm{S} = \frac{2 ((r^2-1)B^2 + 6rB + 8)}
{(B^2 -2rB -4)^{3/2}},
\eeq
\beq
\mathrm{and}~~~~~~~\mathrm{K}=3 - \frac{2 B^3 r (r^2-3) + 4(7 r^2-4)B^2 +96 (Br + 1)}{(B^2 - 2rB -4)^2}.
\label{conv_kurtosis}
\eeq

\noindent At $r = 0$, these expressions reduce to the known skewness and kurtosis of the half-normal distribution:
$\mathrm{S}_{|x \ge 0} = \sqrt{2}(4-\pi)/(\pi-2)^{3/2}$ and $\mathrm{K}_{|x \ge 0} = 3 + 8(\pi-3)/(\pi-2)^2.$

For large, negative $r$, the kurtosis is greater than the canonical value of 3, because the fractional term 
in Eq.~(\ref {conv_kurtosis}) is negative. 
Notably, there is a local minimum in the kurtosis of $\approx 2.75705603817495$ at $r\approx 
1.87412433954420$, which 
arises from the normalization of $\mathrm{K_{un}}$ by the variance.
K then approaches 3 from below as $r$ becomes large and positive and the truncation diminishes.
See Figure~\ref{fig:var_M_H_and_CM}, Panel (b).

In the limit of $r\rightarrow -\infty$, $\mathrm{K_{un}} \rightarrow 9 (M-a)^4$, $\mathrm{K} \rightarrow 9$, 
$\mathrm{S_{un}} \rightarrow 2 (M-a)^3$ and $\mathrm{S} \rightarrow 2$.

\section{Unilaterally truncated, scaled chi distributions}
\label{sec:RSnD}
Consider a distribution in $n$ independent (uncorrelated)
Gaussian random variables of the same form that is centered
at the origin (i.e., $\mu = 0$) and is symmetric with respect to all $n-1$ non-radial dimensions. 
The normalized probability distribution is
\beq
f(x_1,x_2,...x_n) d_{x_1} d_{x_2}...d_{x_n} = 
\left(\frac{1}{2\pi \sigma^2}\right)^{\!\!n/2} \!e^{-(x_1^2 + x_2^2 + ... + x_n^2)/2\sigma^2} d_{x_1} d_{x_2}...d_{x_n}.
\label{ndimens_func_in_x}
\eeq
Examples of two such distributions in two dimensions are shown in Figure~\ref{fig:2Dgaussian}.
These distributions are a subclass of conventional multivariate normal distributions (\cite{multivariateWiki})
whose components are mutually independent and have the same, variable spread parameter.

By analogy with the 1-D case, where the variate can be interpreted as the distance from
the distribution's center, here we use as a variate the quantity $R = \sqrt{x_1^2 + x_2^2 + ... + x_n^2}$,
which is the distance of the point $(x_1, x_2,...x_n)$ from the center of the $n$-D distribution (the origin).
The distribution in $R$ can also be interpreted as that of the magnitude of the sampling $n$-vector.
\begin{singlespace}
\begin{figure}[H]
\vspace{-2.0cm}
\centering
\includegraphics[width=4.5in]{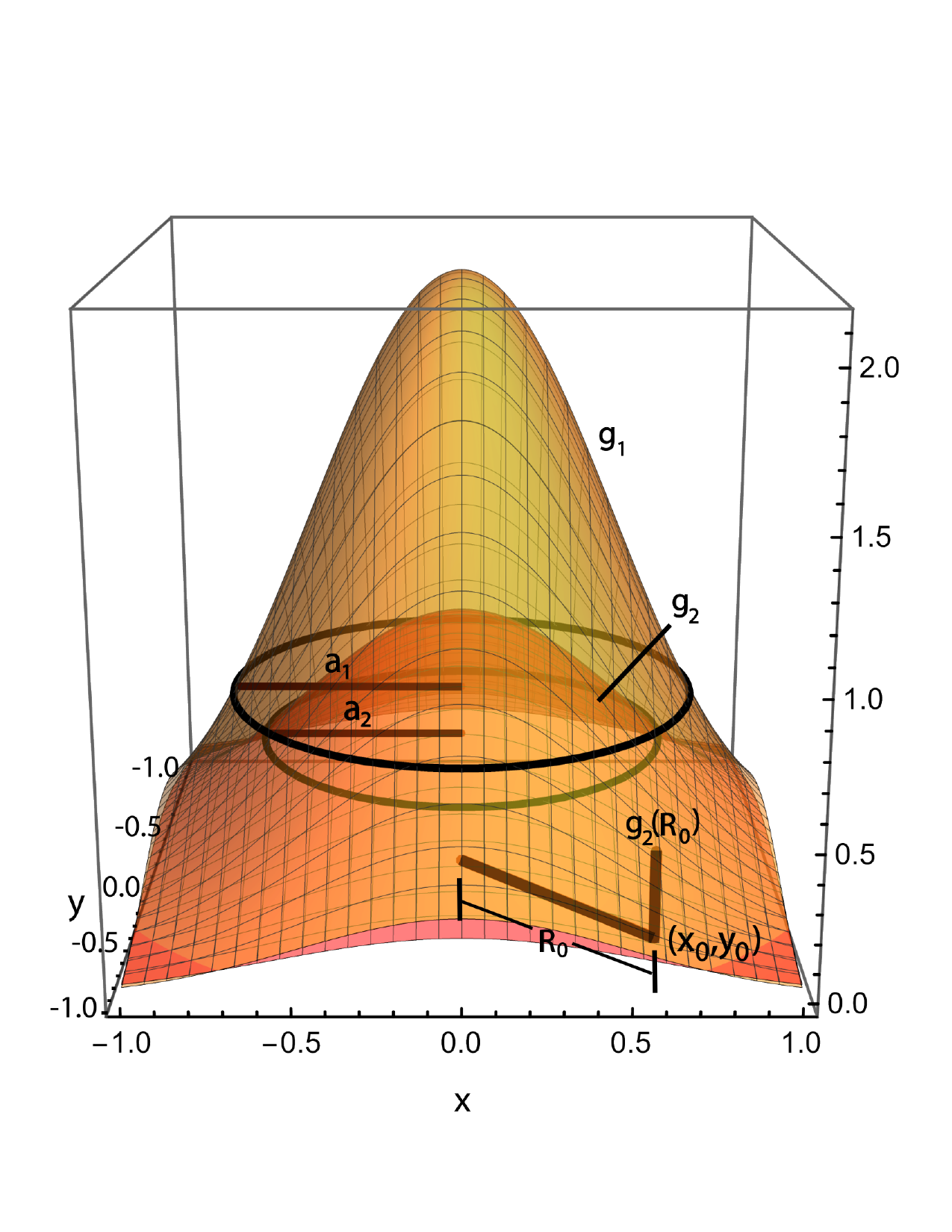}
\vspace{-1cm}
\caption{\small Two radially symmetric, 2-D Gaussian distributions. The plots are of
$f_i(x,y;\sigma_i) = g_i(R,\sigma_i)$, for $R = \sqrt{x^2 + y^2}$, as given
in Eqs.~(\ref{ndimens_func_in_x}) and (\ref{ndimens_g_of_R}),
where the variables $x$ and $y$ have the same univariate distribution.
For the surfaces labeled $g_{_1}$ and $g_{_2}$, the spread parameters are $\sigma_{_1} = 0.491$ and
$\sigma_{_2} = 0.572$.
The cutoffs in these 2-D cases are circles of radius $R=\mathrm{a}$ centered at $(0,0)$, and for the inner truncation
case, the surface under consideration is that below $g(\mathrm{a})$, or $g(R)_{|R > \mathrm{a}}$.
Here, the cutoffs are $a_{_1} =0.75 $ and $a_{_2}=0.65$.
Point $(x_{_0}, y_{_0})$ in the plot is a distance $R_{_0}>\mathrm{a}$ from the origin
and hence $g_{_2}(R_{_0})$ would be included in the calculation for the inner truncation case, but would
be excluded in that for the outer truncation ($R < \mathrm{a}$) case.
Because the mean and dimensionality are fixed (at 1 and 2, respectively), and there is only one remaining model degree of freedom,
$\sigma$ and $a$ have a fixed relationship.  The larger the $\sigma$ value
(i.e., the ``spread'' of the surface), the smaller the cutoff value ($a$) must be, and \it{vice versa}.
}
\label{fig:2Dgaussian}
\end{figure}
\end{singlespace}

Changing to (hyper)spherical coordinates and noting, again, the function's $(n-1)-$D symmetry
and that the area of the surface of an $n$-ball (an $n-1$-sphere) is $2\pi^{n/2}R^{n-1}/\Gamma(n/2)$,
where $\Gamma()$ is the Gamma function, the pdf is given by
\beq
g(R;\sigma,n) dR = 
\frac{2 e^{-R^2/2\sigma^2} R^{n-1}}{(\sqrt{2} \sigma)^n \Gamma(\frac{n}{2})}  dR.
\label{RSND_eq}
\eeq
Since the Gamma function has no zeroes, the expression is well-defined for all $n$.
These distributions are related to the chi distribution (\cite{chi-distributionWiki}),
 except that here, as in (\cite{Dyer1973chidist}), the spread parameter is not restricted to $\sigma=1$, so
we refer to them as \emph{scaled} chi distributions.
The $n=1$ case corresponds to the half-normal distribution,  the $n= \{2, 3,4\}$ cases to the Rayleigh (\cite{bidaoui2019wind}), 
Maxwell-Boltzmann (\cite{Peckham1992}), 
and 4-D Maxwell-Boltzmann (\cite{suman2020}) distributions, respectively, and so on for higher $n$. 

Though this formulation precludes covariances between the component random variables, it
permits a simplified analysis of the behavior of systems in higher, lower, fractional 
and even negative numbers of dimensions by effectively reducing it to a single dimension.
\subsection{Variance in the inner truncation case, $R \in [a,\infty)$}
\label{subsec:inner_truncation}
The integral of $g(R;\sigma,n) dR$ over $R \in [a,\infty)$ is 
$\Gamma\!\left(\frac{n}{2},\frac{r^2}{2}\right)/\Gamma\!(\frac{n}{2})$, where
$\Gamma(,)$ is the upper incomplete Gamma function. Hence, the pdf over this interval is:
\beq
g(R;\sigma,r,n)_{|R\ge a}\, dR = \frac{2 e^{-R^2/2\sigma^2} R^{n-1}}
{(\sqrt{2}\sigma)^{\rbox{2pt}{n}}\, \Gamma\!\left(\frac{n}{2},\frac{r^2}{2}\right)} \, dR,
\label{gaussian_pdf_of_R}
\eeq  
where $r = -a/\sigma$ and $a \ge 0$. This is a unilaterally truncated scaled chi distribution, or UTSCD.
Note that $\Gamma(,)>0$ when the function's second argument is positive, so this expression is well defined and
describes a positive probability distribution for all $\{n,r\} \in \mathbb{R}$.
Since $\{a, \sigma \} \ge 0$ here, $r$ is always negative (or zero).
In addition, note that, for a given dimensionality, $n$, there are only two parameters $\{\sigma,a\}$ here, 
instead of three,
because $\mu=0$. Hence, when $M$ is fixed as it is below, because there is only one remaining model degree 
of freedom, any of the remaining parameters, $\sigma, a,$ or $r$, entirely specify the system. 
Also, due to the nature of the upper incomplete Gamma function and its derivatives, 
some of the results in this section are inferred from numerical analysis rather than derived, and these 
conjectures are indicated as such. 
 
The distribution function $g(R;\sigma,r,n)$, which gives the probability at any particular point on the 
$n$-D surface as a function of its distance from the origin, is
\beq
g(R;\sigma,r,n)_{|R\ge a} = \left(\frac{1}{\sqrt{2\pi} \sigma}\right)^{\!\!n} \frac{ e^{-R^2/2\sigma^2} 
\Gamma\!\left(\frac{n}{2}\right)} {\Gamma\!\left(\frac{n}{2},\frac{r^2}{2}\right)}.
\label{ndimens_g_of_R}
\eeq
Since the Gamma function has poles at the negative integers and zero, the expression above 
is well-defined for $n \notin \{0,-2,-4,...\}$.  The $k$th raw moment over $R \in [a,\infty)$ is 
\beq
M_k(\sigma,r,n)_{|R\ge a} = \frac{\left(\sqrt{2} \sigma\right)^k \Gamma\!\left(\frac{n+k}{2},\frac{r^2}{2}\right)}
{\Gamma\!\left(\frac{n}{2},\frac{r^2}{2}\right)}.  
\label{kth_raw_moment}
\eeq
This gives Form I of the variance as
\beq
\mathrm{Var}(R;\sigma,r,n)_{|R \ge a} = \frac{2 \sigma^2\left(\Gamma\!\left(\frac{n}{2},\frac{r^2}{2}\right) 
\Gamma\!\left(\frac{n+2}{2},\frac{r^2}{2}\right) 
-\Gamma\!\left(\frac{n+1}{2},\frac{r^2}{2}\right)^2\right)} 
{\Gamma\!\left(\frac{n}{2},\frac{r^2}{2}\right)^2}.
\label{inner_unsub_var}
\eeq

\begin{table}[H]
\setstretch{1}
\centering
\tabcolsep=0pt
\begin{tabular*}{\columnwidth}{
  |@{\hspace{0.75em}\extracolsep{\fill}}lrrrr
  @{\hspace{0.2em}}||@{\hspace{1em}}lrrrr@{\hspace{0.75em}}|
}
\hline
\hline
\multicolumn{1}{|c}{$n$} & \multicolumn{1}{c}{$r$} & \multicolumn{1}{c}{$\sigma$} & \multicolumn{1}{c}{$a$}& 
\multicolumn{1}{c}{$\mathrm{Var}(R)_{|R \ge a}$} & \multicolumn{1}{c}{$n$} & \multicolumn{1}{c}{$r$} & 
\multicolumn{1}{c}{$\sigma$} & \multicolumn{1}{c}{$a$}&
\multicolumn{1}{c|}{$\mathrm{Var}(R)_{|R \ge a}$}  \\
\hline
 -6 & -4.0 & 0.23992 & 0.95968 & 0.00157 & 0.5 &  -4.0 &  0.23694 &  0.94778 &  0.00251 \\
 -6 & -2.0 & 0.45730 & 0.91461 & 0.00735 & 0.5 &  -2.0 &  0.42518 &  0.85036 &  0.01887 \\
 -6 & -1.0 & 0.87159 & 0.87159 & 0.01879 & 0.5 &  -1.0 &  0.68035 &  0.68035 &  0.07975 \\
 -6 & -0.5 & 1.69386 & 0.84693 & 0.03058 & 0.5 &  -0.5 &  0.96141 &  0.48071 &  0.20568 \\
 -6 & 0.0 & $\infty$ & 0 & 0.04167 &     0.5 &  0.0 &  2.09210 &  0. &  1.18843 \\
 -3 & -4.0 & 0.23871 & 0.95485 & 0.00194 & 1 & -4.0 & 0.23665 & 0.94661 & 0.00261  \\ 
 -3 & -2.0 & 0.44584 & 0.89169 & 0.01122 & 1 & -2.0 & 0.42137 & 0.84274 & 0.02029  \\
 -3 & -1.0 & 0.81167 & 0.81167 & 0.03789 & 1 & -1.0 & 0.65568 & 0.65568 & 0.08560  \\
 -3 & -0.5 & 1.48829 & 0.74414 & 0.08583 & 1 & -0.5 & 0.87636 & 0.43818 & 0.20620  \\
 -3 & 0.0 &  $\infty$ & 0. & 0.33333 &   1 & 0.0 & 1.25331 & 0. & 0.57080  \\
 -2 & -4.0 & 0.23825 & 0.95301 & 0.00208  & 2 & -4.0 & 0.23604 & 0.94414 & 0.00283  \\
 -2 & -2.0 & 0.44089 & 0.88179 & 0.01301  & 2 & -2.0 & 0.41299 & 0.82598 & 0.02336  \\
 -2 & -1.0 & 0.78204 & 0.78204 & 0.04808  & 2 & -1.0 & 0.60398 & 0.60398 & 0.09438  \\
 -2 & -0.5 & 1.37064 & 0.68532 & 0.12199  & 2 & -0.5 & 0.72655 & 0.36328 & 0.18772  \\
 -2 & 0.0 &   $\infty$  & 0. & $\infty$  & 2 & 0.0 & 0.79788 & 0. & 0.27324  \\
 -1 & -4.0 & 0.23776 & 0.95103 & 0.00224  & 3 & -4.0 & 0.23537 & 0.94148 & 0.00307  \\
 -1 & -2.0 & 0.43523 & 0.87046 & 0.01510  & 3 & -2.0 & 0.40356 & 0.80712 & 0.02667  \\
 -1 & -1.0 & 0.74616 & 0.74616 & 0.06022  & 3 & -1.0 & 0.55189 & 0.55189 & 0.09772  \\
 -1 & -0.5 & 1.22162 & 0.61081 & 0.16394  & 3 & -0.5 & 0.61172 & 0.30586 & 0.15658  \\
 -1 & 0.0 &  $\infty$ & 0. & $\infty$   &  3 & 0.0 & 0.62666 & 0. & 0.17810  \\
 0 & -4.0 & 0.23722 & 0.94890 & 0.00242  & 6 & -4.0 & 0.23303 & 0.93212 & 0.00393  \\
 0 & -2.0 & 0.42876 & 0.85751 & 0.01753  & 6 & -2.0 & 0.36927 & 0.73855 & 0.03636  \\
 0 & -1.0 & 0.70378 & 0.70378 & 0.07336  & 6 & -1.0 & 0.42160 & 0.42160 & 0.08013  \\
 0 & -0.5 & 1.04955 & 0.52478 & 0.19762  & 6 & -0.5 & 0.42544 & 0.21272 & 0.08628  \\
 0 & 0.0 &  $\infty$ & 0. & $\infty$   & 6 & 0.0 & 0.42553 & 0. & 0.08650  \\
\hline
\end{tabular*}
\caption{\small Variance of $R = (\sum_{i}^n {x_i}\smexp{^2})\smexp{^{1/2}}$,
as a function of dimensionality (distributional degrees of freedom) and parameter values for a scaled 
chi distribution.
For a given number of dimensions ($n$) and set of parameters $a$,
$\sigma$, and $r$  (cutoff, spread and their ratio, $r = -a/\sigma$, respectively),
the table indicates the resulting variance over the interval of $R \in [a,\infty)$, assuming
a mean of 1.}
\label{tab:ndim_variance}
\end{table}

Setting $k=1$ in Eq.~(\ref{kth_raw_moment}) gives $M$, the mean over the same interval, and
solving for $\sigma$
\beq
\sigma(M,r,n)_{|R\ge a} = \frac{M}{\sqrt{2}} \cdot 
\frac{\Gamma\!\left(\frac{n}{2},\frac{r^2}{2}\right)}
{\Gamma\!\left(\frac{n+1}{2},\frac{r^2}{2}\right)}, ~~~\mbox{and,~~hence,}
\label{sigma_multi}
\eeq
\bdm
M_k(M,r,n) =  \frac{M^k \Gamma\!\left(\frac{n}{2},\frac{r^2}{2}\right)^{k-1} \Gamma\!\left(\frac{n+k}{2},\frac{r^2}{2}\right)}
{\Gamma\!\left(\frac{n+1}{2},\frac{r^2}{2}\right)^k}.
\edm
Form II of the variance is then
\beq
\mathrm{Var}(R;M,r,n)_{|R\ge a} = M^2 \left(
\frac{\Gamma\!\left(\frac{n}{2},\frac{r^2}{2}\right) 
\Gamma\!\left(\frac{n+2}{2},\frac{r^2}{2}\right)}
{\Gamma\!\left(\frac{n+1}{2},\frac{r^2}{2}\right)^2} -1\right),
\label{multidim_var}
\eeq
where the fractional term is the second raw moment divided by $M^2$.

An alternative form of the variance is
\beq
\mathrm{Var}(R;\sigma,r,n) = \sigma^2 \left(\frac{2 e^{-r^2/2}|r|^n}{(\sqrt{2})^n
 \Gamma\!\left(\frac{n}{2},\frac{r^2}{2}\right)}+n\right)-M^2.
\label{alter_var}
\eeq

As is illustrated in Figure~\ref{fig:3D.and.2D_var.vs.r}, Panel (a),
and Table \ref{tab:ndim_variance} for the case of $M=1$, 
the variance appears to decrease monotonically with increasing $|r|=a/\sigma$ for any $n$, 
given a fixed mean, $M > a$. For fixed $n \in \mathbb{R}$ and $M>a \in \mathbb{R+}$, the maximal variance,
 $\mathrm{V_{max}}_{,r}(n)$ always appears to occur in the limit as $|r| \rightarrow 0$.  
Although this is not proven here, it was 
numerically verified for over 1000 values of $n \in \mathbb{R}$ in a range of $-100 \le n \le 512$.
%Since the corresponding spread, which can be called the maximal variance spread for fixed $n$, or $\sigma_{\mathrm{vmx}}_{,r}(n),$ 
Since the corresponding spread, which can be called the maximal variance spread for fixed $n$, or $\sigma_{\mathrm{vmx},r}(n)$,
is determined and finite for specified mean,
% the associated cutoff value, $a_{\mathrm{vmx}}_{,r} = -r \cdot \sigma_{\mathrm{vmx}}_{,r} = 0$, as well. Hence, including
 the associated cutoff value, $a_{\mathrm{vmx},r} = -r \cdot \sigma_{\mathrm{vmx},r} = 0$ , as well. Hence, including
all of the inner (and outer) regions of the curve--i.e., the maximal extent of the distribution--appears to
maximize the variance.

Thus, for fixed $M$ and real $n > 0$, $\mathrm{V_{max}}_{,r}(R;M,n)= \mathrm{Var}(R;M,r=0,n)$ and
\beq
\mathrm{V_{max}}_{,r}(R;M,n)= M^2\left(\frac{n~\Gamma\!\left(\frac{n}{2}\right)^2}
{2 \Gamma\!\left(\frac{n+1}{2}\right)^2} - 1\right) =
 M^2\left(\frac{2^{2n-3} n \Gamma(\frac{n}{2})^4}{\pi \Gamma(n)^2} - 1\right).
\label{var_ndim_r=0}
\eeq

At $n=1$, the expressions\footnote{The second expression arises from the Gamma duplication formula.} 
correspond analytically to the variance of the half-normal 
distribution, i.e., $M^2(\pi-2)/2$, which is the maximal variance over all cutoff (or $r$) values and all
positive integral dimensionalities. For small $n \in \mathbb{R^+}$, $\mathrm{V_{max,r}}(n) \approx 2M^2/n \pi$,
 and for large $n$, 
$\mathrm{V_{max,r}}(n) \sim M^2\left(\frac{1}{2n} + \frac{1}{8n^2}\right)$ for any finite mean,
as can be determined from series expansions of Eq.~(\ref{var_ndim_r=0}). The plot of an approximating
function for $\mathrm{V_{max,r}}(n)$, $n > 0$, based on these limiting expressions is presented in 
Figure~\ref{fig:Vmax_approx_comb}, Panel (b) in the Appendix (Section~\ref{subsec:approx_max_var_inner}).
The corresponding $\sigma$ value is

\begin{figure}[h]
\hspace{-1.5cm}
\includegraphics[width=7.5in]{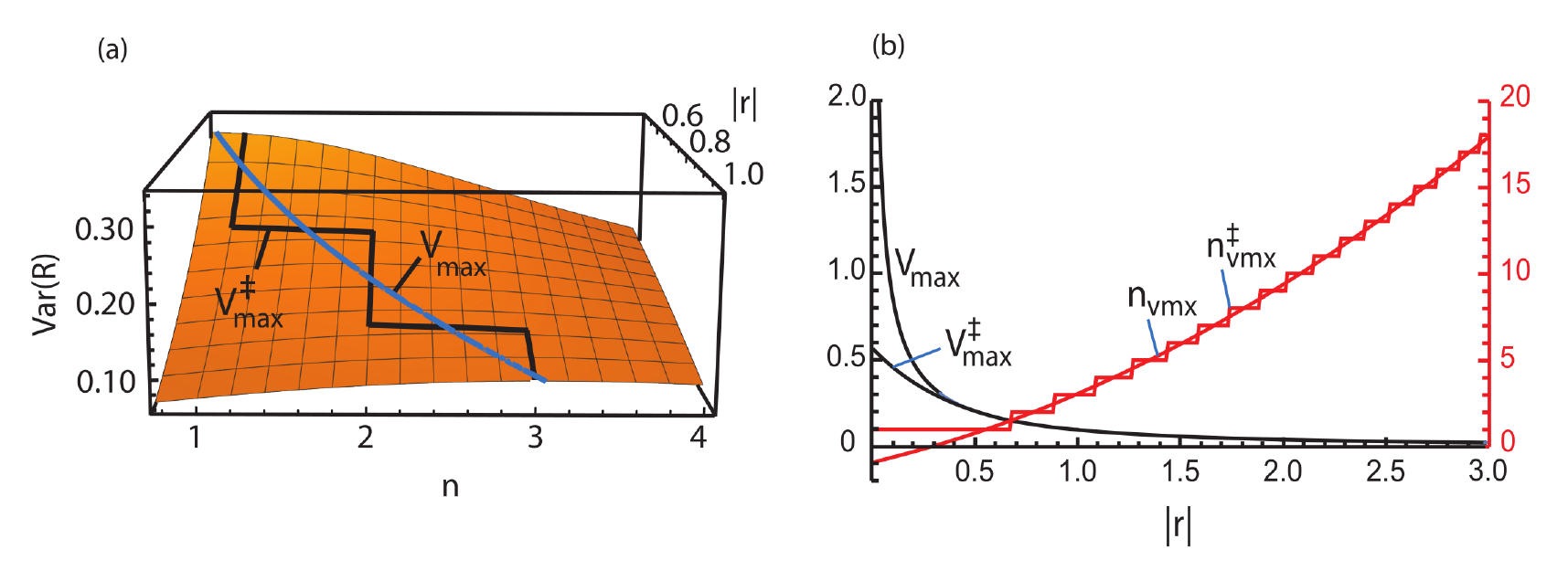}
%3D.var.vs.r.n.w.nmax.and.2D.var.v.n.r.5.comp.pdf}
%3D.var.vs.r.n.w.nmax.and.2D.var.v.n.r.3.pdf}
\caption{\small Plots of the variance of a UTSCD.   
{\bf{Panel (a)}}: 3-D plot of the variance, $\mathrm{Var}(R)$, of a radially symmetric, 
multidimensional Gaussian distribution, $g(R)$ (from Eq. \ref{gaussian_pdf_of_R}),
vs. $|r|=a/\sigma$ and the number of dimensions ($n$), given a fixed mean of 1, over the interval $R \in [a,\infty)$, for $a>0$.
The curve labeled $\mathrm{V_{max}}$ indicates the maximal variance if the number of dimensions is allowed to vary over the real numbers,
whereas the $\mathrm{V^\ddagger_{max}}$ indicates that which results if the number of dimensions is constrained to the positive integers.
{\bf{Panel (b)}}: Plot of the maximal variance and corresponding number of dimensions for UTSCDs with
support of $R \in [a,\infty)$ and a fixed mean of 1, as a function of the $|r|=a/\sigma$ parameter.
$\mathrm{V_{max}}$ and $\mathrm{V^\ddagger_{max}}$ (left-hand scale, black) are the maximal variance as a function of 
$|r|$ for $n \in \mathbb{R}$ and $n \in \mathbb{Z}+$, respectively. The quantities $n_{\mathrm{vmx}}$ and 
$n^\ddagger_{\mathrm{vmx}}$ (right-hand scale, red)
are the number of dimensions that result in those maximal variances.
The functional dependences of $n_{\mathrm{vmx}}$ and $n^\ddagger_{\mathrm{vmx}}$ on $|r|$ do not change with varying mean.
}
\label{fig:3D.and.2D_var.vs.r}
\end{figure}

\bdm
%\sigma_{\mathrm{vmx}}_{,r}(n)  
%= \frac{M \Gamma(n/2)}{\sqrt{2} \Gamma((n+1)/2)} = \sqrt{\frac{\mathrm{V_{max}}_{,r}(n)+M^2}{n}},
\sigma_{\mathrm{vmx},r}(n) = \frac{M \Gamma(n/2)}{\sqrt{2} \Gamma((n+1)/2)} = \sqrt{\frac{\mathrm{V_{max},r}(n)+M^2}{n}}
\edm
for $n > 0 \in \mathbb{R}$.
%For large (positive) $n$, $\sigma_{\mathrm{vmx}}_{,r}(n) \sim M/\sqrt{n}$. 
For large (positive) $n$,  $\sigma_{\mathrm{vmx},r}(n) \sim M/\sqrt{n}$.

This formalism allows us to explore results for fractional as well as negative dimensionalities--essentially, any
real-valued $n$. 
Under fixed mean, for all tested values of $n \in [-2,0] \subset \mathbb{R}$ (using Eq. \ref{multidim_var}), 
$\mathrm{V_{max}}_{,r}(n) \rightarrow \infty$ as $|r| \rightarrow 0$;
and for $n \in (-\infty,-2) \subset \mathbb{R}$, $\mathrm{V_{max}}_{,r}(n) \rightarrow M^2/n(n+2)$ as 
$|r| \rightarrow 0$ 
(see Tables \ref{tab:ndim_variance} and \ref{tab:in_out_comp_trans}). 
These $\mathrm{V_{max}}_{,r}(n)$ expressions also correspond to the limits of the function 
$\mathrm{Var}(R;r,n,M)$ (Eq. \ref{multidim_var}) as $|r| \rightarrow 0$ for those particular ranges of $n$,
which are derived in the Appendix, Section~\ref{subsec:lim_var_chi_dist}.

Although $\mathrm{V_{max}}_{,r}(n)$ always seems to occur as $|r| \rightarrow 0$, by contrast, for fixed $r=-a/\sigma$,
both the maximal variance $\mathrm{V_{max}}_{,n}(r)$ and the number of dimensions in which 
that maximal variance occurs vary with $r$. We will refer to the latter quantity as the 
{\textit{maximal variance dimensionality}}
or the $\mathrm{vmx}$ dimensionality, and denote it as $n_{\mathrm{vmx}}$ for $n \in \mathbb{R}$ and
 $n^\ddagger_{\mathrm{vmx}}$ for $n \in \mathbb{Z+}$.

As illustrated in Figure~\ref{fig:3D.and.2D_var.vs.r}, Panel (b),
the maximal variance $\mathrm{V_{max}}_{,n}(r)$ decreases with $|r|$ monotonically and 
asymptotically as an exponential.  When $n$ is restricted to the positive integers, 
then $\mathrm{V^\ddagger_{max}}_{,n}(r)$ varies approximately as 
\beq
\mathrm{V^\ddagger_{max}}_{,n}(r) \approx \frac{d_{_1}}{\left(\frac{2 d_1}{\pi-2} + 1\right) e^{d_2 |r| ^{d_3}} -1},
\label{Vmax_approx}
\eeq
where the coefficients of the fitted function are given in Table \ref{tab:par_nmax_eq}. The resulting curve is 
nearly superposable with that in the figure (maximum deviation is $\approx 0.009$; 
see also Figure~\ref{fig:Vmax_approx_comb}, Panel (a), in the Appendix).
%{fig:var_nmax_vs_a_sigma}, Panel (a), in the Appendix).

The $\mathrm{vmx}$ dimensionality increases with the cutoff, $a$, or $|r|$, as shown in Figure~\ref{fig:3D.and.2D_var.vs.r}, Panel (b)
and Figure~\ref{fig:var_nmax_vs_a_sigma}, Panel a.
If $n$ is allowed to vary across the real numbers, $n_{\mathrm{vmx}}$ increases monotonically, 
whereas if it is constrained to the positive integers, $n^\ddagger_{\mathrm{vmx}}$ 
rises in a step-wise fashion.
In either case, the larger the cutoff ($a$), or $|r|$ value, the larger the number of dimensions 
that is required to achieve the maximal variance. For large values of $|r|$, $n_{\mathrm{vmx}} \sim r^2$, 
by numerical examination.
As shown in Figure~\ref{fig:var_nmax_vs_a_sigma}, Panel (b),
the maximal variance increases approximately linearly with the spread parameter,
(even out to $\sigma > 100$, which is not shown), while $n_{\mathrm{vmx}}$ drops rapidly from $\approx 100$ at 
$\sigma = 0.1$ to $\approx 25$ at 
$\sigma = 0.2$. Note that $\sigma$ is given exactly (as a function of $M$) by 
substituting $r$ and $n$ into 
Eq.~(\ref{sigma_multi}), but it converges quickly to $M/\sqrt{n}$ for increasing $n$ (and, thus,  
$n_{\mathrm{vmx}} \approx M/\sigma_{\mathrm{vmx}{,n}}^2$).
\begin{singlespace}
\begin{figure}[H]
\vspace{-2.5cm}
\hspace{-1.5cm}
\includegraphics[width=7.5in]{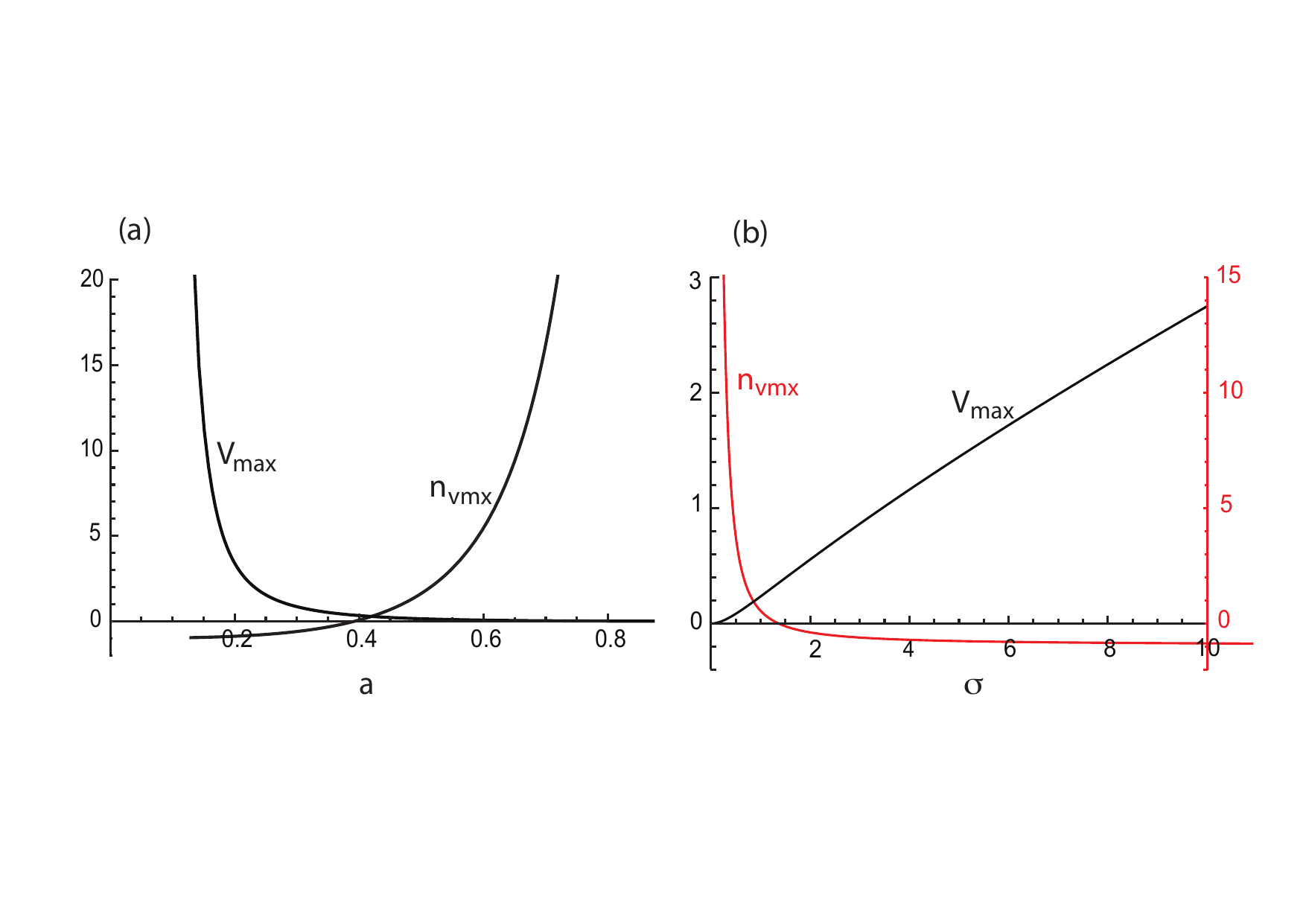}
\vspace{-3.5cm}
\caption{\small
Plots of the maximal variance and $\mathrm{vmx}$ dimensionality as a function of cutoff and spread for a UTSCD
with support over the interval $R \in [a,\infty)$ and a fixed mean of 1.
In {\bf{Panel (a)}}, the maximal variance, $\mathrm{V_{max}}$, and the number of dimensions in which it occurs, 
$n_{\mathrm{vmx}}$, are plotted as a function of the cutoff distance, $a$, with both plots using the same scale.
In {\bf{Panel (b)}}, the same quantities are plotted as a function
of the $\sigma$ parameter, (which is the optimal with $n_{\mathrm{vmx}}$ plotted in red and corresponding to the axis on the right. 
The number of dimensions, $n$, was allowed to vary over the real numbers.
As shown in a), as the cutoff value increases, there is less variance in the system, although increasing the
number of dimensions helps to increase it. Conversely, as illustrated in b), 
 as $\sigma$ increases, $\mathrm{V_{max}}$ also increases,
nearly linearly, while $n_{\mathrm{vmx}}$ drops rapidly, with $n_{\mathrm{vmx}}$ crossing 1 at 
$\sigma \approx 0.78837727124$
and approaching an asymptote at \mbox{-1} as $\sigma$ grows large. }
\label{fig:var_nmax_vs_a_sigma}
\end{figure}
\end{singlespace}

At the other extreme, for $|r| \lessapprox 0.6709572056$, if $n$ is restricted to the positive integers, 
then $n^\ddagger_{\mathrm{vmx}}=1$, and,
again, $\mathrm{V^\ddagger_{max}}_{,n} (r=0)$ corresponds to the variance of the half-normal distribution. 
If, on the other hand, $n \in \mathbb{R}$, then for $|r| \lessapprox 0.5541532357$,  
$n_{\mathrm{vmx}}$ is theoretically less than one, and for $|r| \lessapprox 0.2877193594$,
$n_{\mathrm{vmx}} < 0$. In fact, as shown in Figure~\ref{fig:3D.and.2D_var.vs.r}, Panel (b),
numerical calculations indicate that as $|r| \rightarrow 0$, $\mathrm{V_{max}}_{,n}(r)$ grows without bound, and
$n_{\mathrm{vmx}} \rightarrow \mbox{-1}$.
Hence, for UTSCDs with $n$ distributional dimensions or degrees of freedom, $n \in \mathbb{R}$,
the maximal variance appears to be theoretically unbounded as the number of dimensions
approaches -1. Although it appears true that $\mathrm{V_{max}} \rightarrow \infty$ as $|r| \rightarrow 0$ for 
all $n \in [-2,0] \subset \mathbb{R}$,
there seems to be a local maximum in $\mathrm{Var}(R)$ with respect to $n$ at $n=\mbox{-1}$ 
for any sufficiently small value of $r$.

While the interpretation of fractional and negative dimensions (\cite{MANDELBROT1990,Chhabra1991,Mikhailov1996,Lukaszyk2022}) 
in the current context is unclear,
it is notable that both $\mathrm{V_{max}}_{,n}(r)$ and $n_{\mathrm{vmx}}$ vary smoothly across 
all values of $|r|>0$, including 
the regions around $n_{\mathrm{vmx}} \approx 0$ and 1. See Appendix (Section~\ref{inner_outer_converge})
for further discussion of the variance of scaled chi distributions over inner- and outer-truncated 
intervals for negative dimensionalities.

Further, $n_{\mathrm{vmx}}(r)$ can be approximated as
\beq
n_{\mathrm{vmx}}(r) \approx r\smexp{^2} + c_{_1} |r|\smexp{^{3/2}} + c_{_2} |r| + c_{_3} |r|\smexp{^{1/2}} -1, 
\label{nvmx_approx}
\eeq
where the $c_i$ coefficients are given in Table \ref{tab:par_nmax_eq}.
This approximation is accurate to within $1.282\%$ for $n \in [1,10,000]$ and is exact (to within 
machine precision) for $n = \{1,2,10\}$, because those points were used as constraints in the 
parameterization. Since the relation gives a local (and global) maximum of the variance with respect 
to $n \in \mathbb{R}$ for a given $r$ value, the integral value of $n$ in which the maximal variance 
occurs (i.e., $n^\ddagger_{\mathrm{vmx}}$) can then be obtained by evaluating the variance at the 
two flanking integral $n$ values using Eq.~(\ref{multidim_var}).
\subsection{Variance in the outer truncation case, $R \in [0,a]$}
\label{subsec:outer_truncation}
Over the interval $R \in [0,a]$, the functional forms for the scaled chi distributions mirror those for inner truncation, 
except that the upper incomplete gamma function $\Gamma(,)$ is replaced by the lower incomplete 
gamma function, $\lgamma(,)$. 
The forms for the pdf $g(R;\sigma,n)_{|R \le a} dR$, the $k$th (raw) moment, 
Forms I and II of the variance and $\sigma(M,r,n)_{|R\le a}$ are given in the Appendix
 (Section~\ref{subsec:express_outer_trun}). 
Importantly, the $k$th moment seems to vary monotonically as a function of $\sigma$ between 
the bounds of $0$ and $\frac{n a^k}{n+k}$, which
are the limits of $M_k(\sigma,r,n)_{|R\le a}$ as $\sigma \rightarrow 0$ and $\sigma \rightarrow \infty$.
Hence the mean, $M < \frac{a n}{1+n}$, which implies the cutoff value ($a$) must always be greater than $M(1+n)/n$.
\begin{figure}[h]
\centering
\includegraphics[width=5in]{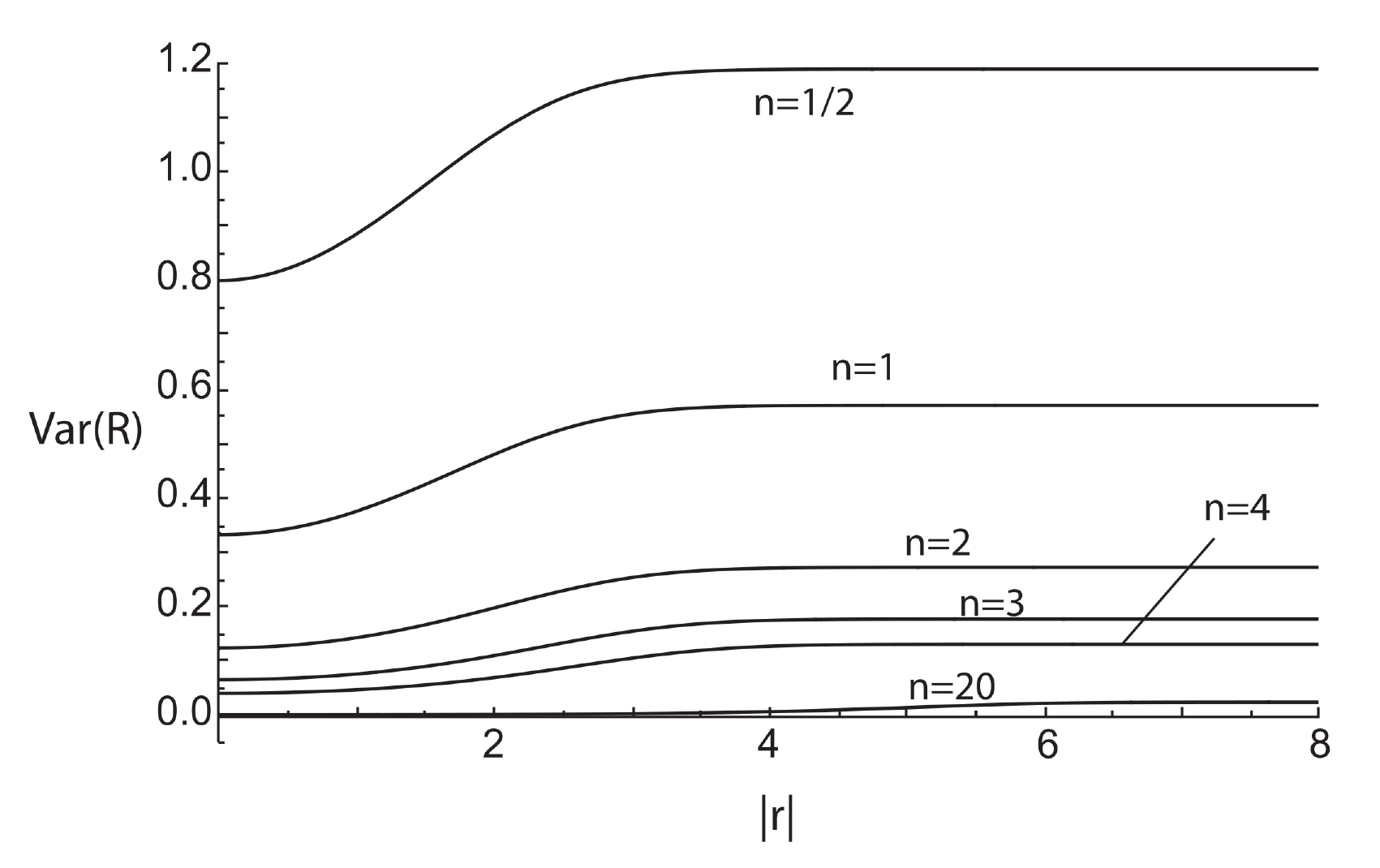}
%gaussian.inv.var.plots.proc.2.comp.pdf}
\caption{\small
Variance of scaled chi distributions with $n$ dimensions or distributional degrees of freedom,
with truncation of their outer regions. The plots are of $\mathrm{Var}(R;M=1,n,r)$
over the interval $R \in [0,a]$,
for several different numbers of dimensions, $n$, as a function of the $|r|$ parameter.}
\label{fig:variance_inner}
\end{figure}
\noindent As may be expected, and as illustrated in Figure~\ref{fig:variance_inner},
for fixed $n>0$ and fixed $M < \frac{a n}{1+n}$, the maximal variance occurs when the outer interval is
included to its maximal extent--i.e., as $|r| \rightarrow \infty$--where the variance approaches the 
limiting value $\mathrm{V_{max}}_{,r}(R;M,n)$ as given in equation (\ref{var_ndim_r=0}). 
Since the corresponding value for the spread parameter, 
$\sigma_{\mathrm{vmx},r}(n)$ is, again, fixed
here (because $r,M$ and $n$ are specified), and finite for $n>0$ (its value is given by Eq. \ref{sigma_multi_inv}, 
Section~\ref{subsec:express_outer_trun}),
%the cutoff, $a_{\mathrm{vmx}}_{,r}(n) = -r \sigma_{\mathrm{vmx}}_{,r}(n)$, also approaches $+\infty$ as $|r| \rightarrow \infty$.
the cutoff, $a_{\mathrm{vmx},r}(n) = -r \, \sigma_{\mathrm{vmx},r}(n)$, also approaches $+\infty$ as $|r| \rightarrow \infty$.

For a fixed cutoff (or $r$), if we restrict the dimensions to the positive integers, the $\mathrm{vmx}$ dimensionality,
$n^\ddagger_{\mathrm{vmx}}=1$ for any $|r|>0$.  If $n \in \mathbb{R+}$, then $n_{\mathrm{vmx}} = 0$, for any $|r|>0$, 
as verified numerically for a range of $r$ values. Hence, in contrast to the case for inner truncation, for outer truncation,
the dimensionality resulting in the maximal variance appears to be fixed at the lowest possible (positive) value, for any $r$.

For a fixed, finite mean, $M$, the {\textit{minimum}} variance under outer truncation is not 0, but
$M^2/n(n+1)$ for $n > 0$. 
This occurs as $|r| \rightarrow 0$ and is also illustrated in Figure~\ref{fig:variance_inner}
for several instances of $n >0$, in which cases the cutoff value is the minimum possible: $a = M(n+1)/n$.  
\begin{table}[H]
\setstretch{1}
\renewcommand{\arraystretch}{0.4} 
\tabcolsep=0pt
\begin{tabular*}{1.0\columnwidth}{@{\extracolsep{\fill}}ccc@{}}
\hline
\hline
\noalign{\vskip 4pt}
\multicolumn{1}{c}{$\lim_{|r| \to }$} & \multicolumn{1}{c}{outer truncation} 
& \multicolumn{1}{c}{inner truncation}\\[4pt]
\hline
\hline
\noalign{\vskip 4pt}
\multicolumn{3}{c}{$\mathrm{\mathbf{Var}}\boldsymbol{(R;M,r,n)}$ } \\[4pt]
\hline
\rule{0pt}{20pt} 
0  &   $\frac{M^2}{n(n+2)}$ , $n > 0 $ &
$M^2\left(\frac{n~\Gamma\!\left(\frac{n}{2}\right)^2}
{2 \Gamma\!\left(\frac{n+1}{2}\right)^2} - 1\right),$ $n>0$ \\
\\
& & $\infty$, $n \in [-2,0]$\\
\\
& & $\frac{M^2}{n(n+2)}$, $n<-2$\\
\\
\hdashline
\\
$\infty$ &   $M^2\left(\frac{n~\Gamma\!\left(\frac{n}{2}\right)^2}
{2 \Gamma\!\left(\frac{n+1}{2}\right)^2} - 1\right), n >0$ &
0 \\
\\
\hline
\hline
\noalign{\vskip 4pt}
\multicolumn{3}{c}{$\boldsymbol{\sigma(M,r,n)}$}\\[4pt]
\hline
\\
0   &$\infty, n>0$ & $\frac{M \Gamma(\frac{n}{2})}{\sqrt{2} \Gamma(\frac{n+1}{2})}$, $n>0$     \\
\\
& & $\infty$, $n \le 0$\\
\\
\hdashline
\\
$\infty$  & $\frac{M \Gamma(\frac{n}{2})}{\sqrt{2} \Gamma(\frac{n+1}{2})}$, $n >0$  &  0   \\
\\
\hline
\hline
\noalign{\vskip 4pt}
\multicolumn{3}{c}{$\boldsymbol{a}$ {\bf{(cutoff)}}}  \\[4pt]
\hline
\\
0  &  $\frac{M(n+1)}{n},n>0$  &  0, $n \ge -1$ \\
\\
&  &  $\frac{M(n+1)}{n}$, $n < -1$  \\
\\
\hdashline
\\
$\infty$  & $\infty, n> 0 $   &   $M$  \\[2pt]
\hline
\end{tabular*}
\caption{\small Comparison between the behavior of outer- ($R \le a$) and inner- ($R \ge a$) truncated 
scaled chi distributions with
$n$ distributional degrees of freedom at limiting values of the parameter $|r|$, for $n \in \mathbb{R}$.
Shown are the values or expressions for the variance (from Form II, $\mathrm{Var}(R;M,r,n)$, top panel),
the spread parameter ($\sigma(M,r,n)$, middle panel), and the cutoff value
 ($a$, bottom panel) for the cases of $|r| \rightarrow 0$ and $r \rightarrow \infty$. The expressions were
derived analytically and confirmed numerically.
The derivations are given in the Appendix, Section~\ref{subsec:lim_var_chi_dist}.}
\label{tab:in_out_comp_trans}
\end{table}
Interestingly, this is the same expression
for the cutoff value that occurs as $|r| \rightarrow 0$ in the case of inner truncation for dimensionalities 
less than \mbox{-1}.
The relationships between the variances, spread parameters and cutoff values for the two types of truncation at the limiting
$r$ values of $0$ and $\infty$ are summarized in Table \ref{tab:in_out_comp_trans}. 
Note from the table that for $n>0$,
the results for outer-truncated distributions as $|r| \rightarrow \infty$ and those for inner-truncated distributions
 as $|r| \rightarrow 0$ converge, as expected. However, as discussed in the Appendix 
(Section~\ref{inner_outer_converge}),
for negative dimensionalities this is not the case. Instead, the two types of distributions unexpectedly converge 
as $|r| \rightarrow 0$ for both, 
if the domain of the moments for outer truncation is extended to $n<0$.

The forms of the raw moments and variances of the doubly truncated case (scaled chi distributions 
with both inner and outer truncation)
are analogous to those of the inner and outer truncation cases and are given in the 
Appendix (Section~\ref{doubly_truncated_chi}). 
\section{Example Applications}
\label{sec:example_apps}
\subsection{Modeling two-body collisions}
\label{subsec:examp_two_body_collisions}
The results in Section~\ref{sec:max_var_UTGD} regarding maximal variances can be useful
in determining theoretical bounds in models of interacting two-body or multi-body systems, for example.
The system-wide probability of hash value collisions with two input files
in cryptography or antibody cross-reactions with two epitopes in immunology, can be shown to be
$p_c \propto (\mathrm{Var}(R_j)+1)/N$ (\cite{Petrella2025}), 
where the $R_j$ are the normalized degeneracies of the antibodies or hash values, and $N$ is 
their total number. Since the mean in these cases is, by construction, 1, 
if the distributions are shown to conform to UTGDs,
then the theoretical maximum for the variance is $ \sup \mathrm{Var}(R_j)=1$, 
and the limiting probability of a collision is therefore
$\max(p_c)\propto 2/N$.  If they are shown to conform to ``conventional'' Gaussian
curves, i.e., UTGDs with $\mu \ge 0$, then the maximal
variance is proportional to $(\pi -2)/2$ and $\max(p_c) \propto \pi/2N$. The latter bounds also hold for
scaled chi distributions with a mean of 1. 
\subsection{Estimating parameters of a UTGD}
\label{subsec:examp_estimating_para}
 In this section, UTGDs are parameterized first using the the approximating functions and then 
using moment-intersecting methods, for two example datasets. Parameterization results for a log-normal distribution
are presented in the Appendix (Section~\ref{log_normal_example}).
The calculations here and in the following sections, as well as in the Appendix, are all done in double-precision.
\subsubsection{Using the approximating functions}
\hfill\\
\indent {\bf{Dataset A.}} 
A data distribution is thought to be a UTGD, with a left-sided 
truncation boundary of \mbox{-1.0}, a mean of 1.3 and variance of 3.0,
 but unknown $\mu$ and $\sigma$ parameters. The distribution is sketched in Figure~\ref{fig:Datasets_A_B_and_noisy}, Panel (a).

Here, approximating function 1 is used, with parameter set II (Table \ref{tab:c_and_d_param}).
From Eq.~(\ref{mu_exp_approx}), letting $M=1.3$, $a=-1$, and $\mathrm{Var}(x,M,U)_{|x \ge -1} = 3$, 
the value for $U$ can be determined by recursion
to be approximately $0.56710775$, so that $\mu = U(M-a)+a \approx -0.93790632$.
Then from Eq.~(\ref{sigma_exp_approx}), it is found that $\sigma \approx 2.85432733.$ 
Substituting these values for $\mu$ and $\sigma$ into 
 standard formulas for the mean and variance of a truncated Gaussian verifies that 
$M(x)_{|x \ge -1}= 1.30013513$ and $\mathrm{Var}(x)_{|x \ge -1} = 2.99938674$. 

The problem is now changed by  
scaling the distribution by a factor of 1000, with $a = -1000.0, M = 1300.0$ and 
$\mathrm{Var}(x)_{|x \ge -1000} = 3.0 \times 10^6$. Repeating the calculations, the 
$U$ value is unchanged and $\mu$ is a thousand-fold greater than the prior value, both 
to within machine precision, as expected. The $\sigma$ parameter value is within
$\approx 1 \times 10^{-7}\%$ of the expected 1000-fold scaled value and the resulting variance is 
$2.99938667 \times 10^6$. This illustrates the scale invariance of the formulation's fractional errors.

\begin{figure}[H]
\hspace{-1.0cm}
\includegraphics[width=7in]{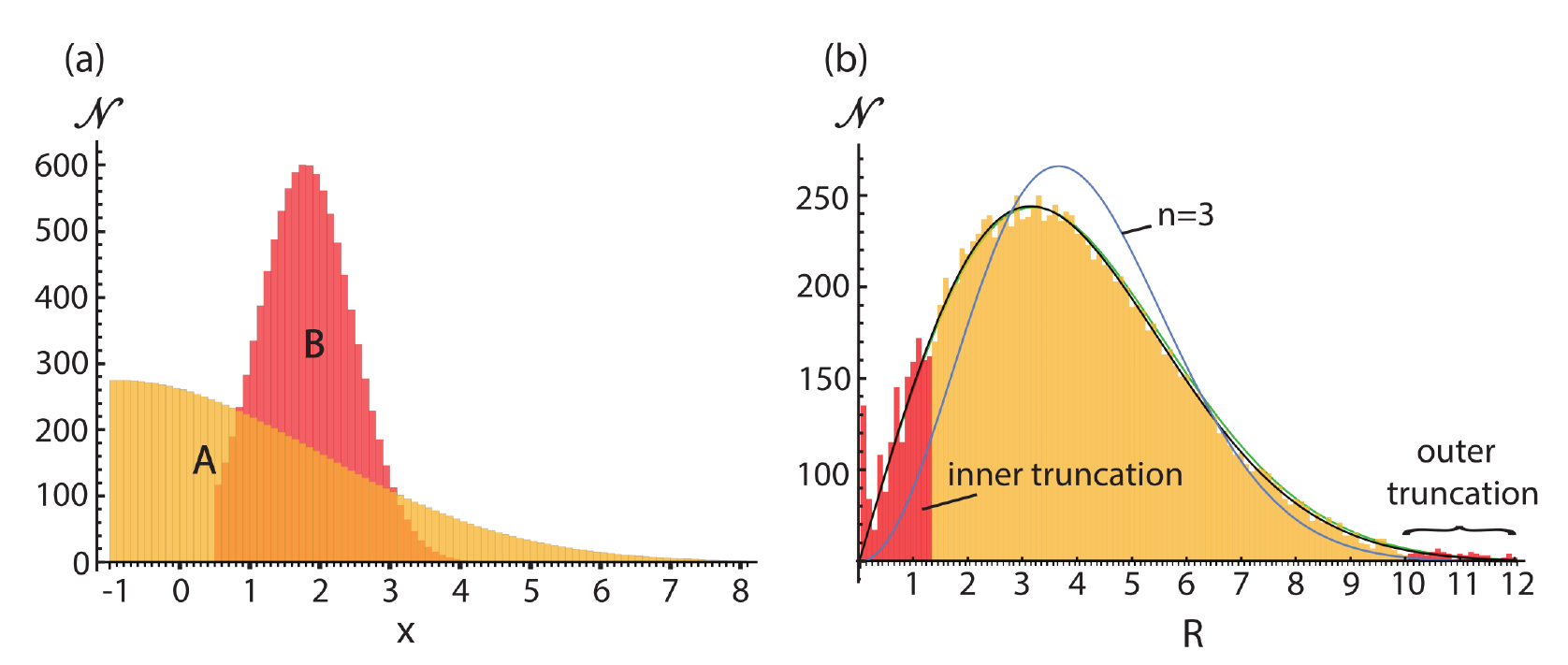}
\caption{\small
Datasets used for the examples.
{\bf{Panel (a)}}: Plots of two (unnormalized) Gaussian dataset distributions with left-sided truncation. Dataset A (yellow) is
truncated at $a= -1.0$ and dataset B (red) at $a = 0.5$.
$\mathscr{N}$ is the number of datapoints in each bin of the distribution.
{\bf{Panel (b)}}: Fit of three scaled chi distribution models to noisy Rayleigh distribution data.
$\mathscr{N}$-- number of data points
per bin, $R$--distance of the sampling vector from the origin.
The black curve is the model using inner truncation, $n=2$, and the data mean for parameterization.
The green curve is that using outer truncation, $n=2$, and Form I of the variance. The black
and green curves are nearly superposable. The blue 
curve is the model using $n=3$, inner truncation, and the data mean. The regions of inner
and outer truncation are indicated in red at the extremes of the plotted values of $R$.}
\label{fig:Datasets_A_B_and_noisy}
\end{figure}

{\bf{Dataset B.}}
A different Gaussian data set has a left-sided truncation boundary of $a=0.5$, a mean of 1.8,
a variance of 0.4, and unknown $\sigma$ and $\mu$ parameters (see Figure~\ref{fig:Datasets_A_B_and_noisy}, Panel (a)).
In this case, approximating function 2 (from Eq. \ref{approx_Lambert}) 
is used for $\tilde{\sigma}(U)$ in Eq.~(\ref{general_recur_mu}) to obtain by recursion that $U \approx
0.95724570$, which gives $\mu \approx 1.74441942$. 
From Eq.~(\ref{approx_Lambert}), then, $\sigma =  0.68720795$.  From the standard formulae, 
the resulting variance and mean are verified to be $\mathrm{Var}(x)_{|x \ge 0.5} = 0.40059879.$ and
$M(x)_{|x \ge 0.5}=1.79955814.$
Scaling the Gaussian by a factor of $1.0 \times 10^6$ and repeating the calculations, 
the $U$, $\mu$, and $\sigma$ parameters are all within machine precision of the values that 
are expected, given the unscaled results above, and the resulting mean and variance 
are $1.79955814 \times 10^6$ and $4.00598788 \times 10^{11}$.

\subsubsection{Using moment-intersecting approximations}
\hfill\\
\indent {\bf{Dataset A.}}
By inspection of the Dataset A distribution (Figure~\ref{fig:Datasets_A_B_and_noisy}, Panel a),
it appears the centering parameter should be somewhere 
in the interval $\mu_{_0} \in [-0.995,-0.7]$, taking a generous interval and noting that $\mu \ne a$. 
Using those endpoints as $\mu_{_1}$ and $\mu_{_2}$ and applying the
Newton-Raphson recursion method to determine $\sigma$ numerically from either Form I or 
II of the variance 
(Eqs. \ref{var_sig_r} and \ref{var_M_r}), given the known mean, variance and cutoff value, we obtain
$(\sigma_{_1,_1}, \sigma_{_1,_2}) = (2.87179324,2.78224205)$ from Form I and  $(\sigma_{_2,_1}, 
\sigma_{_2,_2}) =(0.24118430,14.47105787)$ 
from Form II. Substituting into the two-point formula (Eq. \ref{two-point-MI-formula}), we obtain  $\mu_{_0} = -0.94080581$
and the resulting variance is within about $\approx 0.01 \%$ of the actual value. The results obtained for both
datasets using the various parameterization methods are summarized in Table \ref{tab:param_method_results}. 
For illustrative purposes, the $\sigma(\mu)$ curves for the two forms of the variance around $\mu_{_0}$ are 
shown in Figure~\ref{fig:sigma_vs_mu_lines}.

\begin{figure}[H]
\hspace{-1.0cm}
\includegraphics[width=7in]{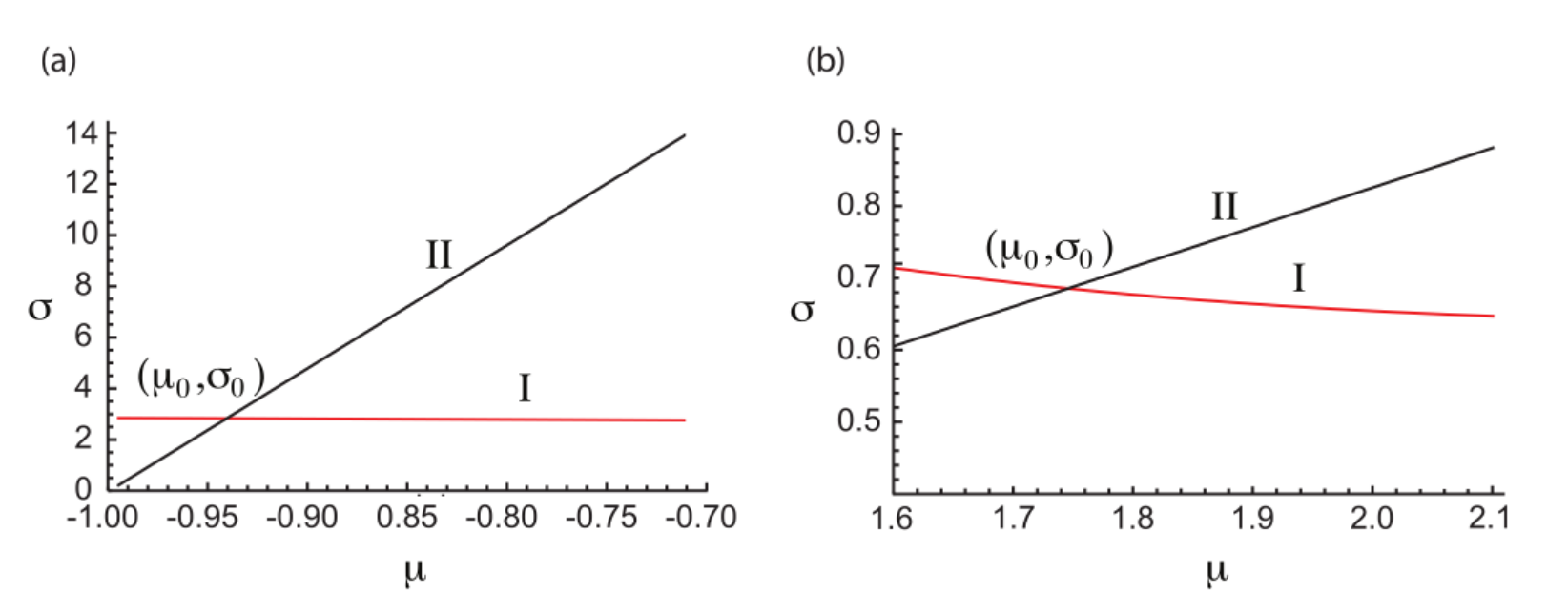}
\caption{\small
Plots of $\sigma(\mu)$ under constant mean and variance, derived from either the unsubstituted form (Form I)
 or substituted form (Form II) of the variance. {\bf{Panel (a)}} shows the results for Dataset A 
and {\bf{Panel (b)}} for Dataset B. The \symquote{I} and \symquote{II} labels
indicate the curves derived from Forms I and II of the variance, respectively, using the Newton-Raphson
method of successive approximation over the range of $\mu$ values.  The curves intersect at the point
labeled $(\mu_{_0},\sigma_{_0})$.
}
\label{fig:sigma_vs_mu_lines}
\end{figure}

Using the point-slope method, instead, we calculate the slopes $d\sigma_{_1}/d\mu = -0.30009822$ and 
$d\sigma_{_2}/d\mu = 48.23685957 $ explicitly using Eq.~(\ref{dsdu_form1}), (Appendix, 
Section~\ref{subsec:dsigma_dmu}) for Form I and $d\sigma_{_2}/d\mu = \sigma/(\mu-a)$ for Form II,
 at $\mu_1=-0.995$. Substituting then into Eq.~(\ref{mu_zero_by_slopes}), 
we have $\mu_{_0} = -0.94080194$ and the resulting variance is accurate to within $0.003 \%$.  If this method is
then repeated, taking this $\mu_{_0}$ value as the starting value ($\mu_{_1}$), the resulting variance is 
within $\approx 10^{-14}$ of actual (see table).

\indent {\bf{Dataset B.}}
The above calculations were repeated for Dataset B, using the broad range of $\mu_{_0} \in [1.6,2.2]$.
The results are given in Table \ref{tab:param_method_results}.  In general, they give about half as many
decimal places of accuracy as the results obtained for Dataset A, because at the higher $r$ value
 ($\approx 1.8$, compared to 0.02 for Dataset A) 
$\sigma(\mu)$ derived from Form I of the variance deviates further from linearity, as illustrated in
Figure~\ref{fig:sigma_vs_mu_lines}.
Nonetheless, if the point-slope method is applied recursively three times, 
the results are within machine precision of actual.
\subsection{Truncation threshold of a UTGD}
\label{subsec:examp_truncation_thresh}
A data distribution has a mean and variance of $50.2$ and
342.5 and is left-truncated at $a=20$.  Is it possible to model this 
data with a UTGD function such that the value at the 
\begin{table}[H]
\setstretch{1}
\tabcolsep=0pt
\begin{tabular*}{1.0\columnwidth}{@{\extracolsep{\fill}}lrrrr@{}}
\hline
\hline
\multicolumn{1}{c}{Method }  & \multicolumn{1}{c}{$\mu_{_0}$ } & \multicolumn{1}{c}{ $\sigma_{_0}$ }  
& \multicolumn{1}{c}{$\mathrm{Var}(x)$ }  & \multicolumn{1}{c}{mean }  \\
\hline
\multicolumn{5}{c}{Dataset A} \\
\hline
actual & & &                                  3.00000000  &  1.30000000 \\
approx f\tss{x}1  &    -0.93790632  &  2.85432733 & 2.99938674  &  1.30013513 \\
two-point     &   -0.94080581  &  2.85534187 & 2.99968031  &  1.29987745 \\
pt-slope  &        -0.94080194 &   2.85552850 & 3.00007245  &  1.30002777 \\
pt-slope $\times 2$   &   -0.94080265  &  2.85549402 & 3.00000000 &   1.30000000   \\
\hline
\multicolumn{5}{c}{Dataset B} \\
\hline
actual & & &                                  0.40000000  &  1.80000000 \\
appr f\tss{x}2 &    1.74441942  &  0.68720795 & 0.40059879 &   1.79955814 \\
two-point    &        1.74141742 &   0.68426408 & 0.39752228&  1.79596745 \\
pt-slope   &       1.74003554  &   0.68350240 & 0.39663777 & 1.79452484 \\
pt-slope $\times 2$ &     1.74527287  &   0.68638919 & 0.39999527  &1.79999231 \\
pt-slope $\times 3$ &     1.74528023  &   0.68639325 & 0.40000000  & 1.80000000 \\
\hline
\end{tabular*}
\caption{\small Parameterizations of truncated Gaussian distributions using different approximation methods.
$\boldsymbol{\{\mu_0, \sigma_0\}}$--the calculated location and spread parameters, 
{\bf{approx f\tss{x}1}}--approximating function 1, {\bf{appr f\tss{x}2}}--approximating function 2, {\bf{two-point}}--
two-point moment-intersecting method,
{\bf{pt-slope}}--point-slope moment-intersecting method, {\bf{pt-slope $\bm{\times 2}$}}--point-slope method
applied twice successively, taking the $\mu_{_0}$ value from the first trial as starting point in the second,
{\bf{pt-slope $\bm{\times 3}$}}-the point-slope method applied three times consecutively, {\bf{actual}}--the actual
values for the variance and mean of the distribution.
}
\label{tab:param_method_results}
\end{table}
\noindent truncation boundary ($f(20)$) is no more than $20\%$ of the maximum?
Letting $H=0.2$ in Eq.~(\ref{var_M_H}), the variance at the threshold is found to be $\approx 218.657$. 
Hence, a UTGD with these mean and threshold parameters cannot adequately model the variance 
of the data.
Using the same equation, it is found by recursion that a threshold value of $H \approx 0.59308951$ would be 
required. This corresponds (by Eq. \ref{eq_r_H}) to an $r$ value of $\approx 1.02216431 $, which means the 
relationship of the parameters in the model would be 
$\mu = 1.02216431 \sigma + 20$. From Eq.~(\ref{var_sig_r}), 
it is found that for this $r$ value and variance, $\sigma \approx 23.20169269$, and hence $\mu \approx 
43.71594225$. Substitution of these values into Eq.~(\ref{gaussian_mu_sig}) verifies that $f(x=a;\mu,\sigma) = 
0.59308951 f_{\!_{M}}$.
\subsection{Estimating parameters of scaled chi distributions} 
\subsubsection{Exact specification}
A dataset with mean 2.3 and variance 0.95 is thought to conform to an inner-truncated Rayleigh distribution, but
the spread ($\sigma$) parameter is unknown.  Letting $n=2$, $M=2.3$ in the Form II expression for the variance 
(Eq. \ref{multidim_var})
and setting $\mathrm{Var}(R) = 0.95$, the Newton-Raphson method can be used to find that $r=-0.53589710$.  
Substituting this value into Eq.~(\ref{sigma_multi}), $\sigma$ is found to be $\approx 1.65173960$. The
cutoff value is then $a = -r \sigma \approx 0.88516246$. Substituting these resulting $\sigma$ and $r$ values into the
 Form I expression for the variance (Eq. \ref{inner_unsub_var}) verifies it to be 0.95. Since we have imposed 3 
specifications ($n$, $M$, and $\mathrm{Var}(R)$),
and the formulation has 3 model degrees of freedom (including the dimensionality), this specification is exact and the 
parameter values result in mutually consistent moments.
\subsubsection{Over-specification}
We now illustrate an example where the model is over-specified and the parameter values cannot result in mutually
consistent moments.
Suppose a dataset contains a distribution thought to conform to either a Rayleigh ($n = 2$) or Maxwell-Boltzmann ($n=3$)
distribution (e.g., wind speed), but it is not clear which.  In addition, there is significant noise in the data.
To construct such a test, a noisy dataset was generated with the function  $\mathscr{N}(R) = |100 g(R)+ 10 \tau(R)|$, 
rounded to the nearest integer. Its distribution is shown in Figure~\ref{fig:Datasets_A_B_and_noisy}, Panel (b).
The $g(R)$ term is an exact Rayleigh ($n=2$) distribution with $\sigma = 3.14$ and support over $R \in [0,\infty]$, 
and $\tau$ is a noise term, $\tau(R)=\mathrm{rand}(-0.02,0.02)/R^{3/4}$, where $\mathrm{rand}(,)$ selects
a uniformly distributed random number within the range. Hence, the noise in the test data diminishes in magnitude 
with increasing $R$, and because $g(R) \rightarrow 0$ for large $R$, the fractional deviation of the resulting 
test data from the exact Rayleigh distribution is greatest at its 
highest and lowest $R$ values. 

The spread parameter ($\sigma$) for the models was determined using 
the variance in either Form I or II, or else the mean. In all three cases 
(Eqs. \ref{inner_unsub_var}, \ref{multidim_var}, and \ref{kth_raw_moment}, $k=1$), the substitution $r = -a/\sigma$ 
allows $\sigma$ to be found by recursion.  Here, the Newton-Raphson method was used. 
In the main set of calculations, $n$ was fixed at two, and the data was truncated either at low values 
of $R$ (``inner truncation''), high values (``outer truncation''), both (``double truncation''),
or neither (``none'').  A set of calculations with $n=3$ was also carried out for the inner-truncated data. 
\begin{table}[h]
\setstretch{1}
\tabcolsep=0pt
\begin{tabular*}{1.0\columnwidth}{@{\extracolsep{\fill}}lcrrr@{}}
\hline
\hline
\multicolumn{1}{l}{truncation} & \multicolumn{1}{c}{$\sigma$ source} &
\multicolumn{1}{c}{$\sigma$} & \multicolumn{1}{c}{ $\mathrm{RMSE}_{_1} \times 1000$} &
 \multicolumn{1}{c}{ $\mathrm{RMSE}_{_2} \times 1000$} \\
\hline
 $n$ = 2 & & & & \\
\hline
 inner & $\mathrm{Var}_{_I}$ &      3.14669418 &  4.077 & 10.665 \\
inner &  $\mathrm{Var}_{_{I\!I}}$ &    3.17732811  & 4.565 & 10.084 \\
 inner & mean &                    3.13829970   &  4.044 & 10.859 \\
\hline
 outer & $\mathrm{Var}_{_I}$ &       3.18696618 &  9.954 & 2.150 \\
 outer & $\mathrm{Var}_{_{I\!I}}$ &     5.53296118   &  640.660 & 648.935 \\
 outer & mean                &    3.09455395  & 9.845 & 2.595\\
\hline
 double&  $\mathrm{Var}_{_I}$ &     3.13040972 & 4.429 & 13.127 \\
 double&  $\mathrm{Var}_{_{I\!I}}$ &     3.10457826 & 4.767 & 13.856 \\
 double&  mean                &    3.13200235 & 4.424 & 13.087 \\
\hline
 none  & $\mathrm{Var}_{_I}$  &  3.18419313 & 9.021 & 1.998 \\
none & mean                   &  3.10374045 & 8.910  & 1.671 \\
\hline
 $n$ = 3 & & & & \\
\hline
 inner & $\mathrm{Var}_{_I}$   &   5.11754942  &   98.883 & 71.550 \\
 inner &  $\mathrm{Var}_{_{I\!I}}$  & 238.81739807  & 122.548 & 112.005 \\
 inner & mean                  &    2.57848811 &   21.349 & 39.797 \\
\hline
\end{tabular*}
\caption{\small Correspondence of over-specified scaled chi distribution models to noisy data.
The test data was generated over the interval $R \in [0.1,12.0]$ as described in the text.
{\bf{truncation}}--inner: the calculation was done over the interval $R \in [1.35,\infty]$,
outer: $[0.0,10.05]$, double: $[1.35,10.05]$,
none: $[0,\infty]$;
$\bm{\sigma}$ {\bf{source}}--the expression used to derive the spread parameter: $\mathrm{Var}_{_I}$ Form I of 
the variances of the test
data, (Eqs. \ref{inner_unsub_var}, \ref{outer_unsub_var}, and \ref{gen_unsub_var}),
$\mathrm{Var}_{_{I\!I}}$--Form II (Eqs. \ref{multidim_var}, \ref{multidim_var_inv},
and \ref{multidim_var_gen}), mean--mean of the test data (Eqs. \ref{kth_raw_moment}, \ref{kth_raw_moment_outer},
and \ref{kth_raw_moment_gen}, with $k=1$).
$\bm{\sigma}$--spread parameter, {\bf{RMSE}}${\bm{_{_1} \times 1000}}$--
the root-mean-squared error in the pdf of the model curve relative to the test data,
 normalized to a total area of 1, $\times 1000$, {\bf{RMSE}}${\bm{_{_2} \times 1000}}$--
the RMSE of the model curve pdf relative to the original Rayleigh distribution pdf (without noise term) $\times 1000.$
For the top portion of the table ($n$=2), the models used two distributional degrees of freedom;
for the bottom portion ($n=3$), they used three.
}
\label{tab:chi_accuracy}
\end{table}

Note that here we are imposing four conditions ($n$, $M$, $\mathrm{Var}(R)$ and $a$), 
so the system is over-specified. Thus, for a given $n$, unless the cutoff value, mean and 
variance all happen to be consistent within the context of the model--which is unlikely--
the results will depend on how the parameterization is done. 
The divergence of the parameterization results under the use of different moment functionals is a measure of the mutual 
consistency, or congruence, of the parameter values under the model.

As shown in Table \ref{tab:chi_accuracy} and illustrated in Figure~\ref{fig:Datasets_A_B_and_noisy}, Panel (b),
the results of the modeling differ not only under different truncation schemes, but also within the same scheme,
 depending on which moment functional was used to determine the $\sigma$ parameter 
(labeled ``$\sigma$ source''). 
The schemes resulting in less divergence among the $\sigma$ parameters determined using different
moment functionals also tend to result in a lower RMS error of the model distribution from the test data 
($\mathrm{RMSE}_1$ in the table).

It is clear from all the results that the $n=2$ models generally produce
a much better fit to the test data than the $n=3$ models. The exception is the $n=2$, Form II-variance outer-truncation model, 
because when the higher-valued portion of the data is removed, the variance of the test data
($\approx 4.04164$) exceeds the maximal variance defined by \\
Eq.~(\ref{multidim_var_inv})\footnote{Eqs. \ref{kth_raw_moment_outer} -- \ref{multidim_var_inv} are found in the 
Appendix, Section~\ref{subsec:express_outer_trun}.}($\approx 4.03353$), 
given the mean, and this leads to anomalous results for $\sigma$ under Form II of the variance.
A similar effect is observed with the inner-truncated $n=3$ case (maximal variance $\approx 3.18385$).
By contrast, the spread parameter as determined with Form I of the variance 
(Eqs. \ref{inner_unsub_var} and \ref{outer_unsub_var}) 
is less sensitive to this qualitative inconsistency between the mean and variance.

The closest fit to the data (RMSE 4.044 $\times 10^{-3}$) is obtained with $n=2$,  inner truncation and use of the mean 
(Eq. \ref{kth_raw_moment}, $k$=1) for determining $\sigma$. 
Notably, in all truncation schemes, the best fit to this dataset is obtained using the mean rather than the variance to 
determine $\sigma$, 
because the noise at the periphery of the data distribution affects the variance to a greater extent than the mean. 
The model with the best fit to the underlying Rayleigh distribution (i.e., without the noise term) is that using no truncation 
and either Form I of the variance or the mean to determine $\sigma$ (RMSE 1.998 $\times 10^{-3}$ and 1.671 $\times 10^{-3}$, 
respectively).

An example of scaled chi-distribution modeling that involves higher numbers of distributional
degrees of freedom, or dimensions, is given in the Appendix (Section~\ref{n-dim_velocities}).

\renewcommand{\thetable}{A\arabic{table}} % Prefix tables with "A"
\setcounter{table}{0} % 
\renewcommand{\thefigure}{A\arabic{figure}}
\setcounter{figure}{0}
\clearpage
\section{Appendix}
\label{sec:appendix}

\subsection{Proof of supporting Lemmas for Theorem \ref{theorem:main}}
\label{subsec:lemma_proofs}
\hfill\\
\indent {\bf{Lemma} \ref{lemma:limit_var}}.
\bdm
\lim_{r \to -\infty} \mathrm{Var}(x;M,r)_{|x \ge a} = (M-a)^2.
\edm
\begin{proof}
For the variance as defined in Eq.~(\ref{var_M_r}), taking the limits of the numerator and denominator directly
as $r\rightarrow -\infty$ or using L'H\^{o}pital's rule
result in the indeterminate form 0/0. However, the numerator and denominator terms can be expanded in
Laurent Series as follows.
From Eq.~(\ref{var_M_r}),
\beq
\mathrm{Var}(x;M,r)_{|x \ge a} = \frac{(M-a)^2\left(T_1 - T_2 -2\right)}{T_3},
\label{var_reduced}
\eeq
%where  $T_1 = \pi e^{r^2}(1+\erf(r/\sqrt{2}))^2, ~~~ T_2 = \sqrt{2\pi} r e^{r^2/2}(1+\erf(r/\sqrt{2}))$, and \\
where  $T_1 = \pi e^{r^2}\xi(r)^2, ~~~ T_2 = \sqrt{2\pi} r e^{r^2/2}\xi(r)$, and 
$T_3=\pi \left(\sqrt{2/\pi} + r e^{r^2/2}\xi(r)\right)^2.$

To find the overall limit of $\mathrm{Var}(x;M,r)_{|x \ge a}$ by expanding each term around
$r \rightarrow -\infty$, the expansions have to
be done to at least 4th order, because the 1st- through 3rd-order terms in the
expansion of $T_3$ are zero.
To fourth order, \\
\indent ~~~~~~~~~~~~$T_1 \sim -\frac{4}{r^4} + \frac{2}{r^2},
 ~~T_2 \sim - \frac{6}{r^4} + \frac{2}{r^2} -2,~ \mbox{and}~~ T_3 \sim \frac{2}{r^4}.$

In the limit of $r \rightarrow \infty$, higher-order terms (5th order and above) do not contribute.
(Longer expansions are provided in Supplementary Material, Section~\ref{subsec:Laurent_ser_expan}.)
Substituting into Eq.~(\ref{var_reduced}), then,
\bdm
\mathrm{Var}(x;M,r)_{|x \ge a} \sim
\frac{(M-a)^2\left(-\frac{4}{r^4} + \frac{2}{r^2} -(- \frac{6}{r^4} + \frac{2}{r^2}-2) -2\right)}
{\frac{2} {r^4}}
= \frac{(M-a)^2\left(\frac{2}{r^4}\right)}{\frac{2}{r^4}}=(M-a)^2.
\edm
The result follows.
\end{proof}
{\bf{Lemma} \ref{lemma:limit_var_pos_inf}}.
\bdm
\lim_{r \to +\infty} \mathrm{Var}(x;M,r)_{|x \ge a} = 0.
\edm
\begin{proof}
This can be determined by inspection. Since $\lim_{r \to +\infty}\xi(r) = \\
\lim_{r \to +\infty}(\erf(r/\sqrt{2}+1) = 2$, $T_1$ from Lemma~\ref{lemma:limit_var}
approaches $4 \pi e^{r^2}$ and since
it is the dominant term at large $r$, the numerator in Eq.~(\ref{var_reduced}) approaches $4\pi e^{r^2} (M-a)^2$.
Similarly, the denominator approaches $\pi(2r e^{r^2/2})^2 = 4 \pi r^2 e^{r^2}$. Hence,
\bdm
\mathrm{Var}(x;M,r)_{|x \ge a} \sim
\frac{ 4\pi e^{r^2} (M-a)^2}{4 \pi r^2 e^{r^2}} = \left(\frac{M-a}{r}\right)^2.
\edm
The result follows.

Alternatively, a fourth-order Laurent Series expansion of $T_1$, $T_2$ and $T_3$ for $r \rightarrow +\infty$,
(see Supplementary Material, Section~\ref{subsec:Laurent_ser_expan})
using Eq.~(\ref{var_reduced}) from Lemma~\ref{lemma:limit_var} results in
\bdm
\mathrm{Var}(x;M,r)_{|x \ge a}  \sim
(M-a)^2 \cdot \frac{2\pi r^5 e^{r^2} - \sqrt{2\pi} (r^6 + 2r^4 -2r^2 + 6) e^{r^2/2} -2 r^3 + 5r}
{2\pi r^7 e^{r^2} + \sqrt{8\pi}r^2 (r^2-3) e^{r^2/2} +r}
\edm
\bdm
\sim (M-a)^2 \cdot \frac{ 2\pi r^5 e^{r^2}}
{2\pi r^7 e^{r^2}} = \left(\frac{M-a}{r}\right)^2.
\edm
\end{proof}
{\bf{Lemma} \ref{lemma:var_different}}.
The variance $\mathrm{Var}(x;M,r)_{|x \ge a}$ is differentiable with respect to $r$ at least
at all finite values of $r$.
\begin{proof}
From Eq.~(\ref{var_M_r}), differentiating the variance with respect to $r$,
\beq
\frac{1}{(M-a)^2}\cdot \frac{\partial (\mathrm{Var}(r))}{\partial r} =
\frac{\sqrt{8/\pi} r + e^{r^2/2}\xi(r)\left(\sqrt{2\pi} r (r^2+3) e^{r^2/2}\xi(r)
- 2\pi e^{r^2}\xi(r)^2 +4 r^2 +6\right)}
{ \pi\left(re^{r^2/2}\xi(r) + \sqrt{2/\pi}\right)^3},
\label{eq_dvar}
\eeq
where the shorthand $\mathrm{Var}(r) = \mathrm{Var}(x;M,r)_{|x \ge a}$ is being used.

As the numerator and denominator are both polynomials in $r$, $e^{r^2}, e^{r^2/2}$
and $\xi(r)$, and each of those functions is finite for $r \in \mathbb{R}$, the numerator and denominator
are also finite.
The derivative, $\partial (\mathrm{Var}(r))/\partial r$, therefore exists at least for all
finite $r$ such that $\pi\left(re^{r^2/2}\xi(r) + \sqrt{2/\pi}\right)^3 \ne 0$. The term
$re^{r^2/2}\xi(r)$ in this expression approaches $-\sqrt{2/\pi}$ only in the limit as $r \rightarrow -\infty$. Elsewhere,
the denominator $\ne 0$ . Thus, the derivative of the variance exists at least for all finite values of $r$.
\end{proof}
{\bf{Lemma} \ref{lemma:dsigdr}}.~~
$\partial \sigma(M,r)/\partial r = -\mathrm{Var}(x;M,r)_{|x \ge a}/(M-a)$.
\begin{proof}
Let $G(r) = re^{r^2/2} \xi(r)$.  From Eq.~(\ref{sigma_M_r})
\bdm
\frac{1}{M-a} \frac{\partial \sigma(M,r)_{|x \ge a}}{\partial r}  =
\edm
\beq
\frac{d}{dr} \left(\frac{G(r)}{r \left(G(r) + \sqrt{2/\pi} )\right)}\right) =
 \frac{ \sqrt{2/\pi} \left(r G^{\prime}(r) - G(r)\right) - G(r)^2}
{r^2 (G(r) +  \sqrt{2/\pi} )^2},
\label{sigma_G_r_part}
\eeq
where $G^{\prime}(r) = d (G(r))/dr$.
\\

Note that ~ $G^{\prime}(r) = \sqrt{2/\pi} r + (1+r^2)G(r)/r$,
~so that
\bdm
\frac{1}{M-a} \frac{\partial \sigma(M,r)_{|x \ge a}}{\partial r}  =
 \frac{-\pi G(r)^2/r^2 + \sqrt{2\pi} G(r) +2}
{\pi \left( \sqrt{2/\pi} + G(r)\right)^2},
\edm
which, from Eq.~(\ref{var_M_r}), equals $-\mathrm{Var}(x;M,r)_{|x \ge a}/(M-a)^2$.
\end{proof}
{\bf{Lemma} \ref{lemma:var_M_r_sig}}. ~~~
$\mathrm{Var}(x;M,r,\sigma)_{|x \ge a} = \sigma^2 + (M-a) r\sigma - (M-a)^2.$
\begin{proof}

Let
\bdm
T(r) = \frac{\sqrt{2/\pi}}  {e^{r^2/2} \xi(r)}.
\edm
From Eq.~(\ref{mean_sig_r}), the mean is $M(x;\sigma,r)_{|x \ge a} = \sigma(r + T(r)) + a$,
and solving for $T$,
\beq
T(r) = \frac{M(x;\sigma,r)_{|x \ge a} -a}{\sigma}  -r.
\label{H_eq}
\eeq
From Eq.~(\ref{var_sig_r}) the unsubstituted variance can be expressed as
\beq
\mathrm{Var}(x;T,\sigma)_{|x \ge a} = \sigma^2\left(1 - T(r)r - T(r)^2 \right).
\eeq
From Eq.~(\ref{H_eq}) above, then
\beq
\mathrm{Var}(x;r,\sigma)_{|x \ge a} = \sigma^2\left(1 - \frac{(M-a -r \sigma)r}{\sigma} -
\frac{(M-a -r \sigma)^2}{\sigma^2}\right),
\eeq
where $M$ is shorthand for the mean.
Rearranging, the result follows.
\end{proof}

\subsection{Additional modeling examples}
\label{additional_model_examp}
\subsubsection{Truncated Gaussian models of log-normal data}
\label{log_normal_example}
Figure~\ref{fig:US_census_data} shows the distribution of $\log_{e}$-transformed U.S. income data,
which we call dataset $X$, from a 2022 Census Bureau survey of over 1M individuals consisting of 766678 non-zero, 
non-blank responses (\cite{USCensusIncome2022}). The distribution contains a quasi-regular train of 
impulsive spikes, which arises from respondents' proclivity for round numbers\footnote{For example, 
the bin containing $\$100,000$ ($y=11.525$) has a frequency that is more than $3\times$ that of the 
two adjacent bins.}, but overall it corresponds approximately to a Gaussian, particularly above income levels of $\approx \$3000$ (log-income $\approx 8$). Hence, the original data, which we call $Y = e^X$ is approximately 
log-normally distributed over support $y > \$3,000$. 

We would like to model the log-transformed yearly incomes above $\ln (\$15000/\$1) \approx 9.62$
as\footnote{Effectively a cutoff of $x= \$14951$ or $y \approx 9.6125$ due to binning effects.}
a truncated normal distribution (UTGD) with the same
mean and variance as the data ($X$). This can be done using the methods described
previously (See Sections \ref{sec:approx_sigma_of_mu} and \ref{subsec:examp_estimating_para}).
The results are shown in Table \ref{tab:us_census}, Panel (a), columns 1-5.
As in prior examples, the mean and variance of the modeled distributions closely match those of the target data.

Further, we can back-calculate an estimate for the mean and variance of the original data ($Y$)
using the $\sigma$ and $\mu$ parameters we have just generated.
Note that, for a UTGD, $f_X(x)$, with support over $x > a$ and parameters $\sigma$ and $\mu$
that describes the distribution of log$_e$-transformed data $X$,
the mean and variance of the distribution, $f_Y(y)$, of the original (untransformed) data, $Y$, over support $y > e^a$,
can be expressed in terms of those same $\mu$ and $\sigma$ parameters as\footnote{Since $\xi(r)$ is zero only as $r \rightarrow -\infty$,
the expressions are well-defined for all real $\{r,\sigma\}$.}:

\beq
M(y)_{|y>e^a} = \frac{e^{\frac{\sigma^2}{2} + \mu} ~\xi(r+\sigma)}
{\xi(r)} ~~~~~~~~  \mathrm{and}~~
\eeq
\beq
\mathrm{Var}(y)_{|y>e^a} = \frac{e^{\sigma^2+2\mu} \left(e^{\sigma^2}\xi(r)\cdot \xi(r+2\sigma) -\xi(r+\sigma)^2\right)} 
{\xi(r)^2} ~~~\mathrm{(Form~I)}~=
\label{var_orig_formI}
\eeq
\begin{figure}[H]
\includegraphics[width=6in]{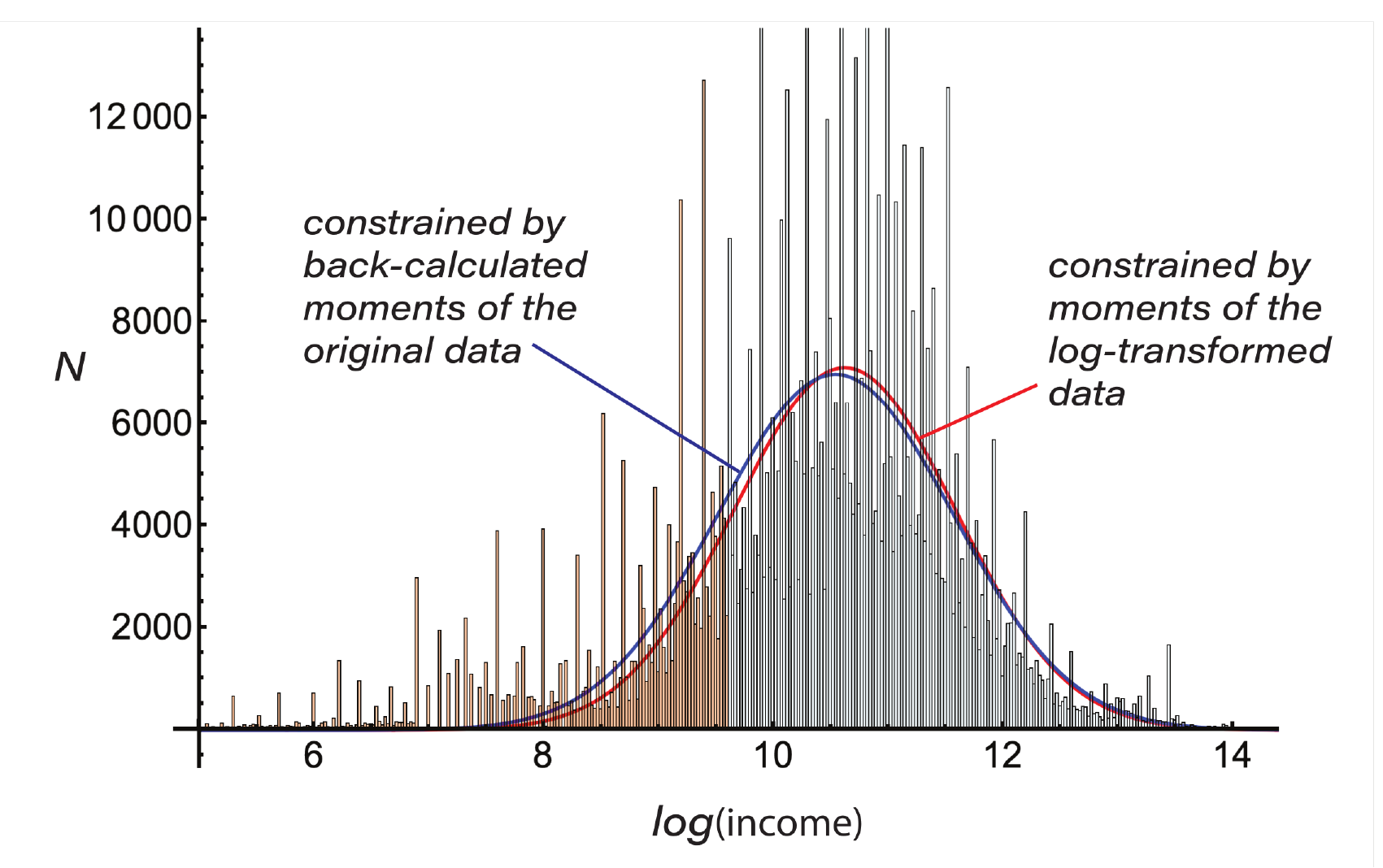}
\caption{\small Left-truncated distribution of log$_e$-transformed income data from the 2022 U.S. Census (see text).
The brown-shaded region, representing incomes less than $\$15000$, is removed in the analysis.
The red curve represents the Gaussian for which the 1st and 2nd moments best match that of the
log-transformed data--i.e., the truncated distribution shown. The blue curve represents the Gaussian 
that results from a parameterization by which the back-calculated estimates of the moments 
of the original (non-transformed) data,
computed using Eqs.~(\ref{var_orig_formI}) and (\ref{var_orig_form2}), best match the actual 
moments of that original data.}
\label{fig:US_census_data}
\end{figure}

\beq
=M^2\left(\frac{e^{-2\mu} M^2 \xi(r)^3 \cdot \xi(r+2\sigma)}
{\xi(r+\sigma)^4}-1\right) ~~~\mathrm{(Form~II)},
\label{var_orig_form2}
\eeq
where $r = (\mu -a)/\sigma$ and $\xi(r) = \erf(r/\sqrt{2} + 1$).

The results are shown in the the 6th and 7th columns of Table \ref{tab:us_census}, 
Panel (a). As indicated, the $\mu$ and $\sigma$ values obtained by reproducing the moments of the 
log-transformed data ($M(x)$ and $\mathrm{Var}(x)$) underestimate the actual variance of the original 
dataset by $\approx 8.9\%$ to $17.4\%$  and either under- or overestimate the mean by $0.87\%$ to $1.5\%$.
These discrepancies arise from the deviations of the dataset from an exact log-normal distribution. 

However, we can repeat the calibration using a formalism based on the expressions above so as to match the mean and variance
of the original data, $M(y)$ and $\mathrm{Var}(y)$, rather than those of the log-transformed data.
In our parameterization methods, we substitute Forms I and II of the variance given by Eqs.~(\ref{var_orig_formI}) and (\ref{var_orig_form2})  
above--or more precisely, their natural logs-- 
for Forms I and II of the variance as defined previously (Eqs. \ref{var_sig_r} and \ref{var_M_r}) and use the central moments
of the original data ($\ln(\mathrm{Var}(y))$ and $M(y)$) as the constraints, rather than those of the log-transformed data ($\mathrm{Var}(x)$ and $M(x)$). 
In this approach, the expressions for $d \sigma_1(\mu)/d\mu$ and 
$d \sigma_2(\mu)/d\mu$ used in the point-slope method, for example, are derived from the log of 
Eqs.~(\ref{var_orig_formI}) and (\ref{var_orig_form2})
(and are provided in Supplementary Material, Section~\ref{subsec:calib_log-normal}).
The log of the variance is used rather than the variance, itself, because 
Eqs.~(\ref{var_orig_formI}) and (\ref{var_orig_form2}) involve large numbers and have steep dependences on $\mu$ and $\sigma$, 
making convergence of the results much less efficient.
Even using the log of the expressions, 
the starting value for $\mu$ should be taken in a region as close as
possible to what might be expected as the optimal value to improve convergence.  
For these calculations, we chose the value obtained from the original trials (Panel (a) 
in the table) using approximating function 1. 

\begin{table}[H]
\small
\setstretch{1}
\tabcolsep=0pt
\begin{tabular*}{1.0\columnwidth}{@{\extracolsep{\fill}}lrrrrrr@{}}
\hline
\hline
\multicolumn{7}{c}{US Census income data} \\
\hline
\multicolumn{1}{c}{ }  & \multicolumn{1}{c}{}  & \multicolumn{1}{c}{ }  
& \multicolumn{2}{c}{\underline{~log-transformed data~}}  & \multicolumn{2}{c}{\underline{~~~~~original data~~~~~} }\\
\multicolumn{1}{c}{Method }  & \multicolumn{1}{c}{$\mu_{_0}$ } & \multicolumn{1}{c}{ $\sigma_{_0}$ }  
& \multicolumn{1}{c}{$\mathrm{Var}(x)$ }  & \multicolumn{1}{c}{mean } & 
\multicolumn{1}{c}{$\mathrm{Var}(y)$ }  & \multicolumn{1}{c}{mean } \\ [0.5ex]
\hline
\multicolumn{1}{c}{(a)} &\multicolumn{6}{c}{Moments matched to log-transformed data}\\
\hline
actual  &    &    &  0.59624965  &  10.87794700  &  8.30314328E09  &  75588.26676\\
approx1  &  10.62072268  &  0.95980918  &  0.59381611  &  10.87920851  &  6.86251643E09  &  74429.39513\\
two-point$^{**}$  &  10.62597505  &  0.97573110  &  0.61037287  &  10.89284606  &  7.56086257E09   &  76243.14095\\
pt-slope\textsuperscript{$\dagger$}  &  10.61705449  &  0.96351573  &  0.59625846  &  10.87923477  &  6.95525687E09  & 74563.17154 \\ 
pt-slope\! $\times 2$   &  10.61417290  &  0.96437852  &  0.59624910  &  10.87794276  &  6.95077578E09  &  74477.64809 \\
pt-slope\! $\times 3$  &  10.61418159  &  0.96437647  &  0.59624965  &  10.87794700  &  6.95081031E09  &  74477.95700 \\
\hline
\multicolumn{1}{c}{(b)} &\multicolumn{6}{c}{Moments matched to original data}\\
\hline
actual  &         &    &  0.59624965  &  10.87794700  &  8.30314328E09  &  75588.26676  \\
pt-slope\textsuperscript{$\ddagger$}      &     10.54083623&  1.02134843 & 0.62878841 & 10.87028469  &  8.31968113E09 & 75794.98588    \\       
pt-slope\! $\times 2$     &  10.53369913 & 1.02332445 & 0.62853040 & 10.86734490  &  8.30327221E09 & 75589.21062  \\
pt-slope\! $\times 3$     &  10.53367109 & 1.02333081 & 0.62852804 & 10.86733243 &   8.30314328E09 & 75588.26678   \\  
\hline
\end{tabular*}
\caption{\small Modeling results for log-transformed income data. In {\bf{Panel (a)}} (top),
log$_e$-transformed 2022 income data for U.S. citizens (see text) is modeled using the expressions 
for the variance of UTGDs (Eqs. \ref{var_sig_r} and \ref{var_M_r}), 
with the variance and mean of the log-transformed data used as constraints, similarly to prior examples.
In {\bf{Panel (b)}} (bottom), the same data are modeled using the derived expression for the variance and mean
of the original (untransformed) data (Eqs.~\ref{var_orig_formI} and \ref{var_orig_form2}; more precisely, the log of these expressions),
with the mean and the log of the variance of the original data used as constraints.  
$\boldsymbol{\{\mu_0, \sigma_0\}}$--the calculated location and spread parameters, 
{\bf{approx1}}--approximating function 1, 
{\bf{two-point}}--two-point moment-intersecting method,
{\bf{pt-slope}}--point-slope moment-intersecting method, {\bf{pt-slope$\bm{\times2}$}}--point-slope method
applied twice successively, taking the $\mu_{_0}$ value from the first trial as starting point in the second,
{\bf{pt-slope$\bm{\times3}$}}-the point-slope method applied three times consecutively, {\bf{actual}}--the actual
values for the variance and mean of the distributions.
$**$ using $\mu_1 = 9.7$ and $\mu_2=11.0$.
$\dagger$ the starting $\mu$ value was the result from the two-point calculation.
$\ddagger$ the starting $\mu$ value was the result from approx1 in the top panel.
}
\label{tab:us_census}
\end{table}

As shown in Panel (b) of the table, after one application of the point-slope method, the estimate for 
$\mathrm{Var}(y)$ is correct to within $0.2\%$, or one decimal place; after 3 applications, to within 
$3 \times 10^{-8}\%$ or 8 decimal places. The $\%$ errors for the mean, $M(y)$, are similar. 

As expected with this approach, 
the mean and variance estimates for the log-transformed data ($M(x)$ and $\mathrm{Var}(x)$) 
are less accurate than in the prior set of calculations (Panel a).

\noindent Because there are only two model degrees of freedom, 
both sets of moments cannot be matched simultaneously. 
The ``best fit'' curves for the original-data-moment matching and the transformed-data-moment matching approaches, according to
Table \ref{tab:us_census}, are shown in Figure~\ref{fig:US_census_data}.  

Generally, this approach to reproducing the first and second central moments, $M(y)$ and $\mathrm{Var}(y)$, of an original dataset, $Y$, 
based on the UTGD parameters $\sigma$ and $\mu$ obtained from the modeling of a transformation of that dataset, $X = T(Y)$,
 would seem to be applicable\footnote{assuming $T$ satisfies the usual conditions for a valid change-of-variables transformation.},
 when the Form I and Form II variances  ($\mathrm{Var}(y;\sigma,\mu$))
 of the original dataset 1) are derivable or expressible in terms of those same parameters, and 2) give rise to
corresponding, implicit $\sigma(\mu)$ curves that are sufficiently smooth at their intersection.

\subsubsection{Truncated $n$-dimensional velocity distributions}
\label{n-dim_velocities}
In the spirit of (\cite{suman2020}), consider a model of the velocities of argon-like particles in 
$n$-D space that uses an extension of the canonical Maxwell-Boltzmann formulation. Assuming a straightforward
 extension of the known 1- through 4-D versions of the model, the general form of such a distribution of 
particle velocities, from Eq.~(\ref{gaussian_pdf_of_R}), is
\beq
g(v) = \frac{2 e^{-\frac{v^2}{2\sigma^2}} v^{n-1}}
{(\sqrt{2}\sigma)^{\rbox{2pt}{n}}~ \Gamma\!\left(\frac{n}{2},\frac{r^2}{2}\right)} dv, 
\eeq
where $v = \sqrt{\sum_{i=1}^n {v_i}^2}$, each $v_i$ is the $i$th velocity component of the particles, 
 $\sigma = \sqrt{RT/m}$, $m$ is the particle mass, and $R$ is the Boltzmann constant in SI units, 
$8.31446262$ J K$^{-1}$ mol$^{-1}$. The mass is constant, but the temperature varies, as from the
 definition of $\sigma$, $T = m \sigma^2/R$. 
\begin{singlespace}
\begin{figure}[H]
\vspace{-1em}
\centering
\includegraphics[width=4in]{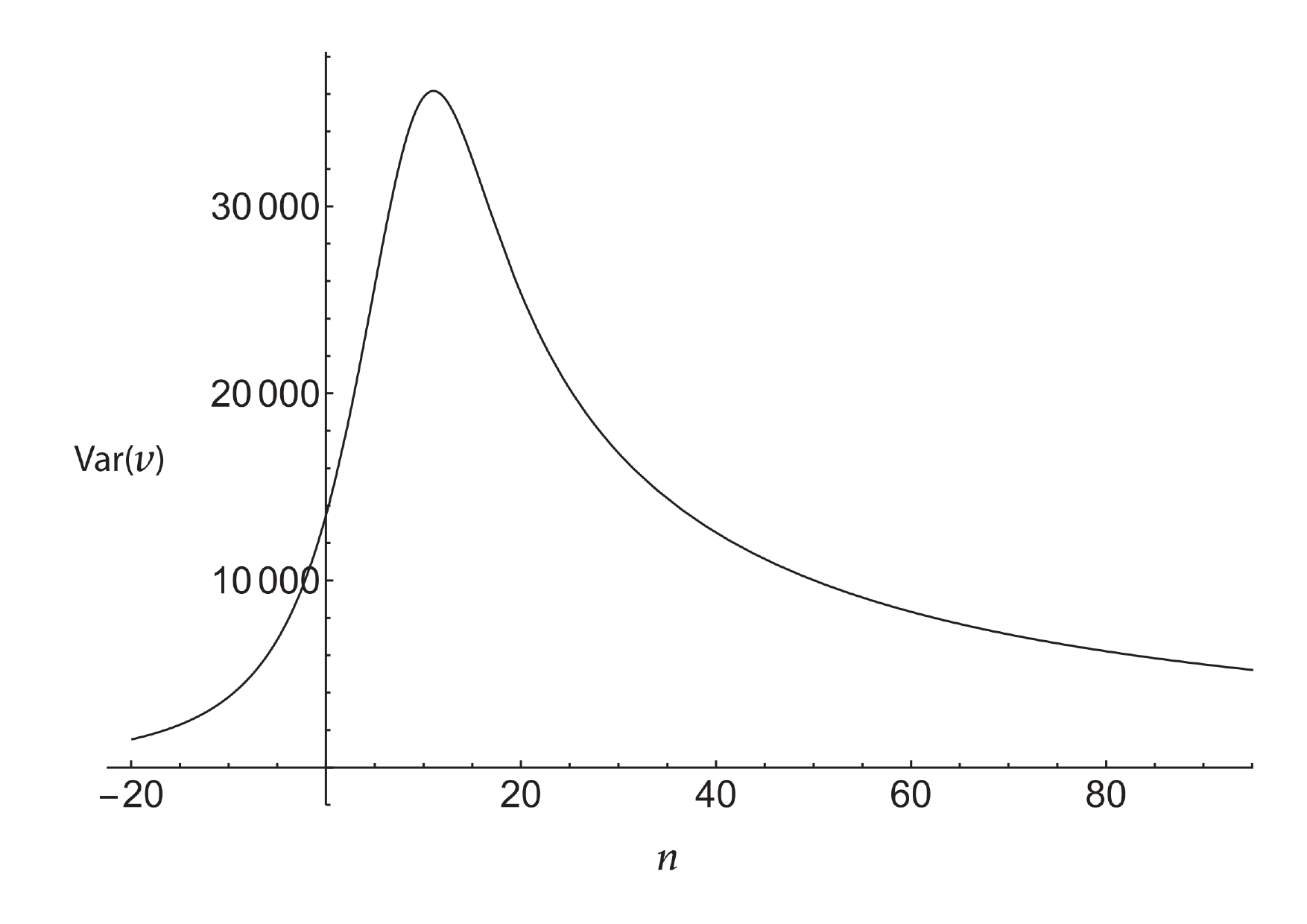}
\vspace{-1em}
\caption{\small  
Plot of the variance of the particle velocities, $\mathrm{Var}(v)$, in $m{\smexp{^2}}/s{\smexp{^2}}$
vs. the number of spatial dimensions, $n \in \mathbb{R}$, for an argon-like gas conforming to $n$-D 
Maxwell-Boltzmann statistics with lower-velocity truncation and a fixed mean velocity of 1000 m/s. 
From Eq.~(\ref{multidim_var}).}
\label{fig:Max-Boltz}
\end{figure}
\end{singlespace}

Suppose the constraints on the system are such that the normalized velocities, $a/\sigma = -r$ must all
be above $2.2$ and the mean velocity of the system, $M$, is fixed at $M=1000$ m/s. We would like to 
determine the dimensionality ($n$) required to maximize the variance in the velocities under these conditions, 
as well as the associated temperature, $\sigma$ value and velocity cutoff ($a$) value.

Letting $|r| = 2.2$ in Eq.~(\ref{nvmx_approx}), the estimate for $n_{\mathrm{vmx}} \approx 10.887$.
Then, substituting the fixed value for the mean velocity (1000 m/s) into Form II of the variance 
(Eq. \ref{multidim_var}), we find that $\mathrm{Var}(v,M,n=10) = 35974.945$ m$\smexp{^2}$/s$\smexp{^2}$  
and $\mathrm{Var}(v,M,n=11) = 36227.769$ m$\smexp{^2}$/s$\smexp{^2}$. 

Hence, the maximal variance dimensionality, 
$n^{\ddagger}_{\mathrm{vmx}} = 11$. For illustrative purposes, a plot of the variance in the velocities as 
a function of $n$, for $n \in [-20,100] \subset \mathbb{R}$, is shown in Figure~\ref{fig:Max-Boltz}.

From Eq.~(\ref{sigma_multi}), $\sigma = (1000 ~\mathrm{m/s}) ~\Gamma\!\left(\frac{11}{2},\frac{(2.2)^2}{2}\right)/
\sqrt{2} ~\Gamma\!\left(\frac{11+1}{2},\frac{(2.2)^2}{2}\right) = $  300.605 m/s and from the expression above 
for the temperature, $T = 434.163$ K. The required cutoff value is, thus, $a = -r \sigma = 661.330$ m/s.
Form I of the variance (Eq. \ref{inner_unsub_var}) evaluated at the above $\sigma$ value and $n=11$ verifies that 
$\mathrm{Var}(v)$ = 36227.769 m$\smexp{^2}/$s$\smexp{^2}$.

\subsection{Form of a UTGD in terms of the mean and $r$ value}
\label{subsec:form_of_f}
Assuming that the parameters $\mu, \sigma $ and $a$ are consistent with the mean, $M$, over
the interval of $x \in [a,\infty)$, the general form of a Gaussian distribution over that
interval is $f(x;M,r)_{|x\ge a} = f_{\!_{M}} e^{\omega}$.
where
\bdm
\omega = - \frac{\left(r e^{r^2/2} \xi(r)(x-M) + \sqrt{2/\pi}(x-a)\right)^2}
{2 e^{r^2} \xi(r)^2 (M-a)^2},
\edm
$\xi(r)= \erf\!\left(r/\sqrt{2}\right)+ 1$, and $r = (\mu -a)/\sigma$.
\subsection{Slopes of $\sigma(\mu)$ under constant variance for a UTGD}
\label{subsec:dsigma_dmu}
The general scheme is to derive $d\sigma/d\mu$ as
\beq
\frac{d\sigma}{d\mu} = - \frac{d\mathrm{Var}(x;M,\sigma,\mu,r)/d\mu}
{d\mathrm{Var}(x;M,\sigma,\mu,r)/d\sigma},
\label{dsdu}
\eeq
under the condition that $\mathrm{Var}(x;M,\sigma,\mu,r)$ is constant
and noting  
that $\frac{d\sigma}{d\mu}$ is ``tied'' to the specific form of the variance.
This relation arises from the fact that, around a point on the level curve of a function, $f(x,y) = c$,
$df = 0 = \frac{\partial f}{\partial x} dx + \frac{\partial f}{\partial y}dy$, where $c$ is a constant. However,
 in the current case, if one uses functions of $r$, such as $\mathrm{Var}(x;\sigma,r)$ (Form I),
there are implicit dependencies and thus the total derivatives must be used.

The total derivatives of Form I of the variance with respect to $\mu$ and $\sigma$ are
\bdm
\frac{d \mathrm{Var}(x;\sigma,r)}{d\mu} =\frac{\partial \mathrm{Var}(x;\sigma,r)} {\partial r}
\frac{\partial r} {\partial \mu}
~~~\mathrm{and}~~~\frac{d \mathrm{Var}(x;\sigma,r)}{d\sigma} =\frac{\partial \mathrm{Var}(x;\sigma,r)} {\partial r}
\frac{\partial r} {\partial \sigma} + \frac{\partial \mathrm{Var}(x;\sigma,r)} {\partial \sigma}.
\edm
Noting that $\partial r/\partial \sigma = -r/\sigma$ and $\partial r/\partial \mu = 1/\sigma$, from Eq.~(\ref{dsdu}), we have
\bdm
\frac{d\sigma_1}{d\mu} = -\frac{\frac{d \mathrm{Var}(x;r)}{d\mu}}
{\frac{d \mathrm{Var}(x;r)}{d\sigma}} =
\frac{\partial \mathrm{Var}(x;\sigma,r)/\partial r}
{r\cdot \partial \mathrm{Var}(x;\sigma,r)/\partial r - \sigma\cdot \partial \mathrm{Var}(x;\sigma,r)/\partial \sigma}.
\edm
where the ``1'' subscript indicates that Form I of the variance is being held constant.
Carrying out the differentiation, the result is
\beq
\frac{d\sigma_1}{d\mu} = \frac{ \sqrt{2}\pi (r^2-1) e^{r^2} \xi(r)^2+ 6\sqrt{\pi} r  e^{r^2/2}\xi(r) + 4\sqrt{2}         }
{\mathscr{H}(r)},
\label{dsdu_form1}
\eeq
where $\xi(r) = \erf\!\left(r/\sqrt{2}\right)+ 1$ and
\bdm
 \mathscr{H}(r)= -\sqrt{2}\pi r(r^2+1) e^{r^2}\xi(r)^2 - 4\sqrt{2}r ~+
2\pi^{3/2}  e^{3r^2/2} \xi(r)^3 - 2\sqrt{\pi} (3r^2+2) e^{r^2/2} \xi(r).
\edm
Since $\mathscr{H}(r) \rightarrow 0$ only as $r\rightarrow -\infty$, the slope exists for all
real $r$.
The minimum of the slope, $\min_r \! \left(d \sigma_1/d\mu \right) \approx -0.3247082711469855$,  
occurs at $r \approx  0.5987678543728484$, while the supremum is 0 and occurs at $r\rightarrow \pm \infty$.
See Figure~\ref{fig:ds1_du_plot} in Supplementary Material (Section~\ref{subsec:plot_ds1_du}). 

By contrast, although Form II of the variance (Eq. \ref{var_M_r}) depends explicitly on $M$, $r$ and $a$,
the $M$ and $a$ parameters are taken as given--i.e., constant--in these applications. Thus,
\bdm
\frac{d\sigma_2}{d\mu} =-\frac{\frac{d \mathrm{Var}(x;r)}{d\mu}}
{\frac{d \mathrm{Var}(x;r)}{d\sigma}} =
-\frac{\frac{\partial \mathrm{Var}(x;r)}{\partial r} \frac{\partial r}{\partial \mu} }
{\frac{\partial \mathrm{Var}(x;r)}{\partial r} \frac{\partial r}{\partial \sigma} }
= -\frac{1/\sigma}{-r/\sigma} = \frac{1}{r} = \frac{\sigma}{\mu-a},
\edm
provided $\mu \ne a$,
as $\frac{\partial \mathrm{Var}(x;r)}{\partial r}$ cancels.
Hence, the slopes of $\sigma(\mu)$ under constant variance that arise from the
two forms of the variance differ in general.
In addition, for Form II, since $d\sigma/\sigma = d\mu/(\mu-a)$, integrating both sides gives 
$\sigma(\mu) = e^{c_0}(\mu-a)$, where $c_0$ is a constant of integration.
(Since Form II depends only on $M$, $a$ and $r$, if the
first two variables are constant and the variance is fixed, $r$ must be constant.)
Thus, for a UTGD, $\sigma(\mu)$ derived from Form II under a constant variance, mean and cutoff value
is always exactly linear.
\subsection{Convergence of inner and outer truncation for UTSCDs}
\label{inner_outer_converge}
 As mentioned in Section~\ref{subsec:outer_truncation}, it is expected that the variances
for inner- and outer-truncated scaled chi distributions converge in the limiting cases where the
maximal extent of the distributions is included--namely, $|r| \rightarrow 0$ for inner truncation
and $|r| \rightarrow \infty$ for outer truncation.
This does turn out to be the case for positive dimensionalities, as shown in the prior results.

For negative dimensionalities, outer-truncated scaled chi distributions
\bdm
g(R;\sigma,n)_{|R \le a} dR =
\frac{2 e^{-R^2/2\sigma^2} R^{n-1}}{(\sqrt{2} \sigma)^n {\lgamma}\left(\frac{n}{2},\frac{r^2}{2}\right)}  dR
\edm
can take on negative values, because unlike the upper incomplete gamma function, $\Gamma(s,x)$, the lower incomplete gamma function, $\lgamma(s,x)$, has real zeroes for $x>0$,
specifically over the intervals $s \in (-2,-1)\cup (-4,-3) \cup (-6,-5)...$  Over the interval $s \in (-1,0)$,
for example, $\lgamma(s,x) < 0$, for $x>0$, and hence $g(R;\sigma,n)_{|R \le a} dR  < 0$ for $n \in (-2,0) \subset \mathbb{R}$.
In addition, for negative dimensionalities,
the expressions for the raw moments of the outer-truncated scaled chi distributions (Eq. \ref{kth_raw_moment_outer} in Appendix)
are not always well-defined,
because the integrals only converge if $n > -k$, where $k$ is the order of the moment.

\begin{singlespace}
\begin{table}[H]
\setstretch{1}
\renewcommand{\arraystretch}{0.4}
\tabcolsep=0pt
\begin{tabular*}{1.0\columnwidth}{@{\extracolsep{\fill}}ccc@{}}
\hline
\hline
\noalign{\vskip 4pt}
\multicolumn{1}{c}{$\lim_{|r| \to }$} & \multicolumn{1}{c}{outer truncation} & \multicolumn{1}{c}{inner truncation}\\
\hline
\noalign{\vskip 4pt}
\hline
\noalign{\vskip 4pt}
\multicolumn{3}{c}{$\mathrm{\mathbf{Var}}\boldsymbol{(R;M,r,n)}$ } \\
\noalign{\vskip 4pt}
\hline
\\
0  &   $\frac{M^2}{n(n+2)}$ , $n \notin \{0,-2,-4...\}$\textsuperscript{**}  &
$M^2\left(\frac{n~\Gamma\!\left(\frac{n}{2}\right)^2}
{2 \Gamma\!\left(\frac{n+1}{2}\right)^2} - 1\right),$ $n>0$ \\
\\
& & $\infty$, $n \in [-2,0]$\\
\\
& & $\frac{M^2}{n(n+2)}$, $n<-2$\\
\\
\hdashline
\\
$\infty$ &   $M^2\left(\frac{n~\Gamma\!\left(\frac{n}{2}\right)^2}
{2 \Gamma\!\left(\frac{n+1}{2}\right)^2} - 1\right), n \notin \{0,-2,-4...\}$\textsuperscript{\S} &
0 \\
\\
\hline
\noalign{\vskip 4pt}
\multicolumn{3}{c}{$\boldsymbol{\sigma(M,r,n)}$}\\
\noalign{\vskip 4pt}
\hline
\hline
\\
0   &$\infty,~n<-1$ or $n>0$\textsuperscript{\P} & $\frac{M \Gamma(\frac{n}{2})}{\sqrt{2} \Gamma(\frac{n+1}{2})}$, $n>0$     \\
\\
& & $\infty$, $n \le 0$\\
\\
\hdashline
\\
$\infty$  & $\frac{M \Gamma(\frac{n}{2})}{\sqrt{2} \Gamma(\frac{n+1}{2})}$, $n \notin \{0,-2,-4...\}$  &  0   \\
\\
\hline
\hline
\noalign{\vskip 4pt}
\multicolumn{3}{c}{$\boldsymbol{a}$ {\bf{(cutoff)}}}  \\
\noalign{\vskip 4pt}
\hline
\\
0  &  $\frac{M(n+1)}{n},~ n<-1$ or $n>0^\ddagger$  &  0, $n \ge -1$ \\
\\
&  &  $\frac{M(n+1)}{n}$, $n < -1$  \\
\\
\hdashline
\\
$\infty$  & $\infty,~n \notin \{0,-1,-2,-3,...\} $   &   $M$  \\
\noalign{\vskip 4pt}
\hline
\noalign{\vskip 4pt}
\end{tabular*}
\caption{Comparison between the behavior of outer- ($R \le a$) and
inner- ($R \ge a$) truncated \Tstrut scaled
chi-distributions with $n$ distributional degrees of freedom at limiting values of the parameter
$|r|$, for $n \in \mathbb{R}$.
This version of the table includes results for outer truncation with $n<0$, which are obtained by extending
the domains of the moments to these values of $n$.  The table headings are the same as for
Table \ref{tab:in_out_comp_trans}. $\boldsymbol{**}$ For $n \in (-2,0)$, variances are negative. 
$\boldsymbol{\S}$ For $n<0$, variances are negative. $\boldsymbol{\P}$ For $n \in (-1,0)$, $\sigma$ would be negative.
$\boldsymbol{\ddagger}$ For $n \in (-1,0)$, cutoff would be negative. }
\label{tab:in_out_comp_trans_full}
\end{table}
\end{singlespace}

However, if we are prepared to accept negative probabilities(\cite{Wigner1932,blass2018negative,Sorin-Nicolae2022}), and if we extend the form
of the moments for outer truncation to negative $n$, we can examine these variances as well.  Notably,
the maximal variances for UTSCDs having negative dimensionalities show complex behavior, converging for inner and outer truncation only
over particular intervals of $n$, as shown in Figure~\ref{fig:non-converging_var_comb}, Panel (b).  Also, as shown in Figure 
\ref{fig:rvmx}, unlike the case for positive $n$, the cutoff values resulting in the maximal variance are not always
zero; they are non-zero when the maximal variance differs for inner and outer truncation.  

In addition, we can show both numerically and analytically that
the variances for inner and outer truncation unexpectedly diverge in the limiting cases where the distributions are included
to their maximal extent (when $|r| \rightarrow 0$ for inner truncation and $|r| \rightarrow \infty$ for outer truncation).
This is illustrated in Panel (a) of Figure~\ref{fig:non-converging_var_comb}. It occurs because the 
forms of the variances differ only by the substitution of $\lgamma(,)$ for $\Gamma(,)$, and
because the relation 
$\lim_{x \to \infty} \lgamma(s,x) = \lim_{x \to 0^+} \Gamma(s,x) = \Gamma(s)$ holds only for $s>0$.
For $s<0 \subset \mathbb{R}$, 
\\
$\lim_{x \to \infty} \lgamma(s,x) = \Gamma(s)$, provided $s$ is not 0 or a negative
integer, but $\lim_{x \to 0^+} \Gamma(s,x) = +\infty$, so the limits do not correspond.

Moreover, as shown in Table \ref{tab:in_out_comp_trans_full} and as can also be demonstrated numerically and analytically 
(see Appendix, Section~\ref{subsec:lim_var_chi_dist}), under this extended-domain definition of the moments, 
we find, somewhat counter-intuitively, that for $n<-2, n \notin \{-4,-6...\}$, the results for the variance 
under outer truncation as $|r| \rightarrow 0$ (i.e., for collapsing distributions) correspond to those under 
inner truncation as $|r| \rightarrow 0$
(i.e., for maximally expanding distributions). The results for the variance and maximal variance under 
inner truncation, as $|r| \rightarrow 0$, are illustrated by the
red curve in Figure~\ref{fig:non-converging_var_comb} Panel (a) and the red portions of the curve in Panel (b).

\begin{figure}[H]
\vspace{-0.5cm}
\hspace{-1.0cm}
\includegraphics[width=7.25in]{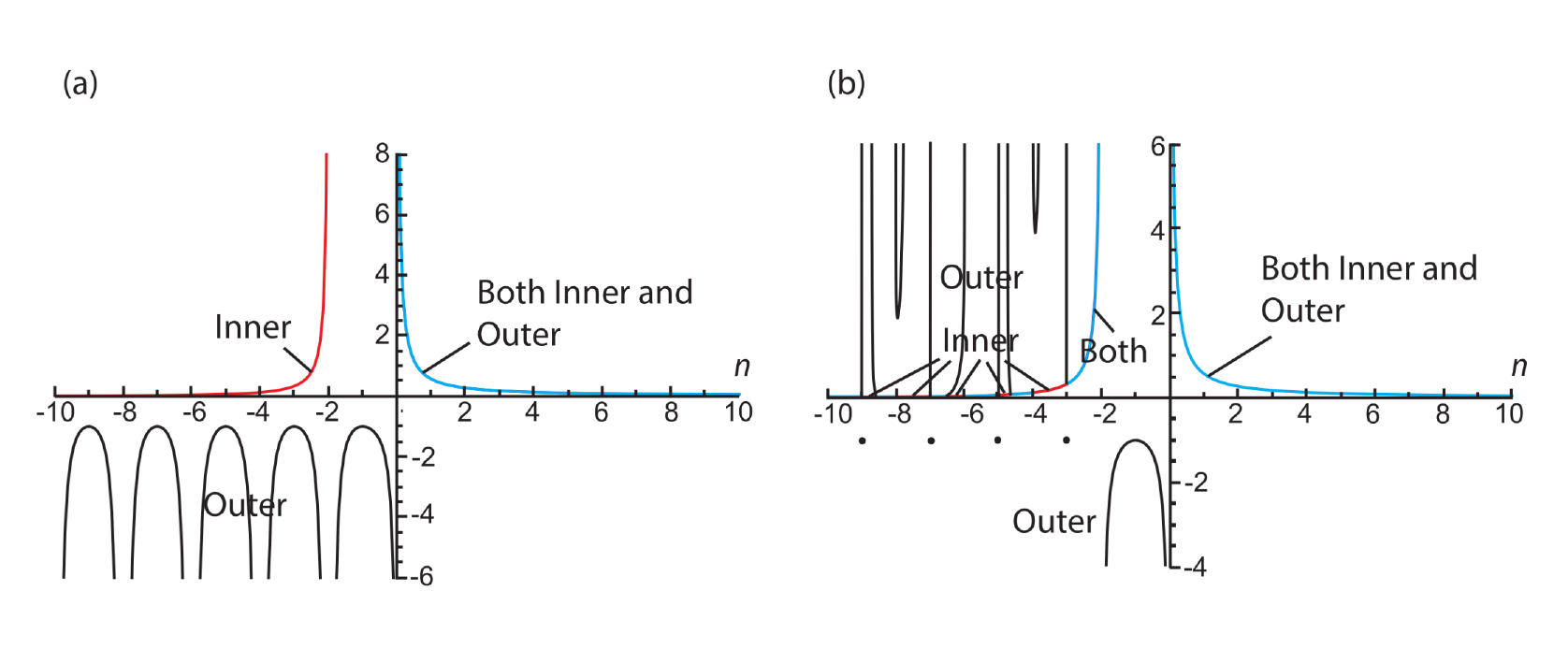}
\vspace{-1.0cm}
\caption{\small
{\bf{Panel (a)}} Partial convergence of the limiting variances for scaled chi distributions under inner and outer truncation,
as a function of the number of distributional degrees of freedom (or dimensionality), $n$.
The curves indicate the  variance, given a fixed mean of 1
and fixed dimensionality, $n \in \mathbb{R}$, for different truncation conditions.
The red curve (labeled ``Inner'') gives the limit of the variance obtained with
inner truncation (Eq. \ref{multidim_var_inv}) as $|r| \rightarrow 0$.
The black curves (labeled ``Outer'') give that obtained with outer truncation
as $|r| \rightarrow \infty$, and they indicate that the results for $n<0$ diverge.
For $n>0$, the results converge, as indicated by the blue curve (labeled ``Both Inner and Outer'').
{\bf{Panel (b)}}
Comparison of the maximal variance $\mathrm{V_{max}}_{,r}(n)$ in UTSCDs, with a mean of 1,
using either inner or outer truncation, as a function of dimensionality ($n$).
The labeling and color schemes are as in Panel (a).  For $n>0$, the results for both truncation
schemes coincide and correspond to Eq.~(\ref{var_ndim_r=0}).  However, for $n<0$, they coincide only in particular
intervals:  \( n \in (-3,-2] \cup (\approx -4.65148785,-4) \cup (-6,-5) \cup (-7,\approx -6.61423671) \cup 
(\approx -8.57469649,-8) \cup (-10,-9) \), etc.
In the intervening regions, the maximal variances obtained under the two truncation
schemes diverge. For outer truncation, there are singular points in the variance
at $(-2k-1,-1)$, for $k \in \mathbb{Z+}$, four of which are shown in the plot.
}
\label{fig:non-converging_var_comb}
\end{figure}
Further, as indicated in Table \ref{tab:in_out_comp_trans_full},
for $n<-2$, not only the variances, but also the $\sigma$ parameters and
cutoff values under inner and outer truncation correspond in the limit of $|r| \rightarrow 0$.
 In addition, it can be shown by first-order series expansion of $\lgamma(n/2,r^2/2)$
and $\Gamma(n/2,r^2/2)$ that for small $|r|$ and $n<0$, the inner and outer truncated scaled chi distributions are
additive inverses of each other: $g(R;\sigma,r,n)_{|R\ge a} dR =
-g(R;\sigma,r,n)_{|R\le a} dR = - n e^{-R^2/2\sigma^2} R^{n-1}/(|r|\sigma)^n dR$
(from Eqs. \ref{gaussian_pdf_of_R} and \ref{gaussian_pdf_of_R_inv}).
Since the moments of these limiting distributions are equal,
it would appear that for negative dimensionalities
($n < -2$) and in the limit of small $|r|$, collapsing UTSCDs under outer truncation
and maximally expanding UTSCDs under inner truncation are, in this sense, mathematically equivalent.
\begin{figure}[H]
\centering
\includegraphics[width=4.5in]{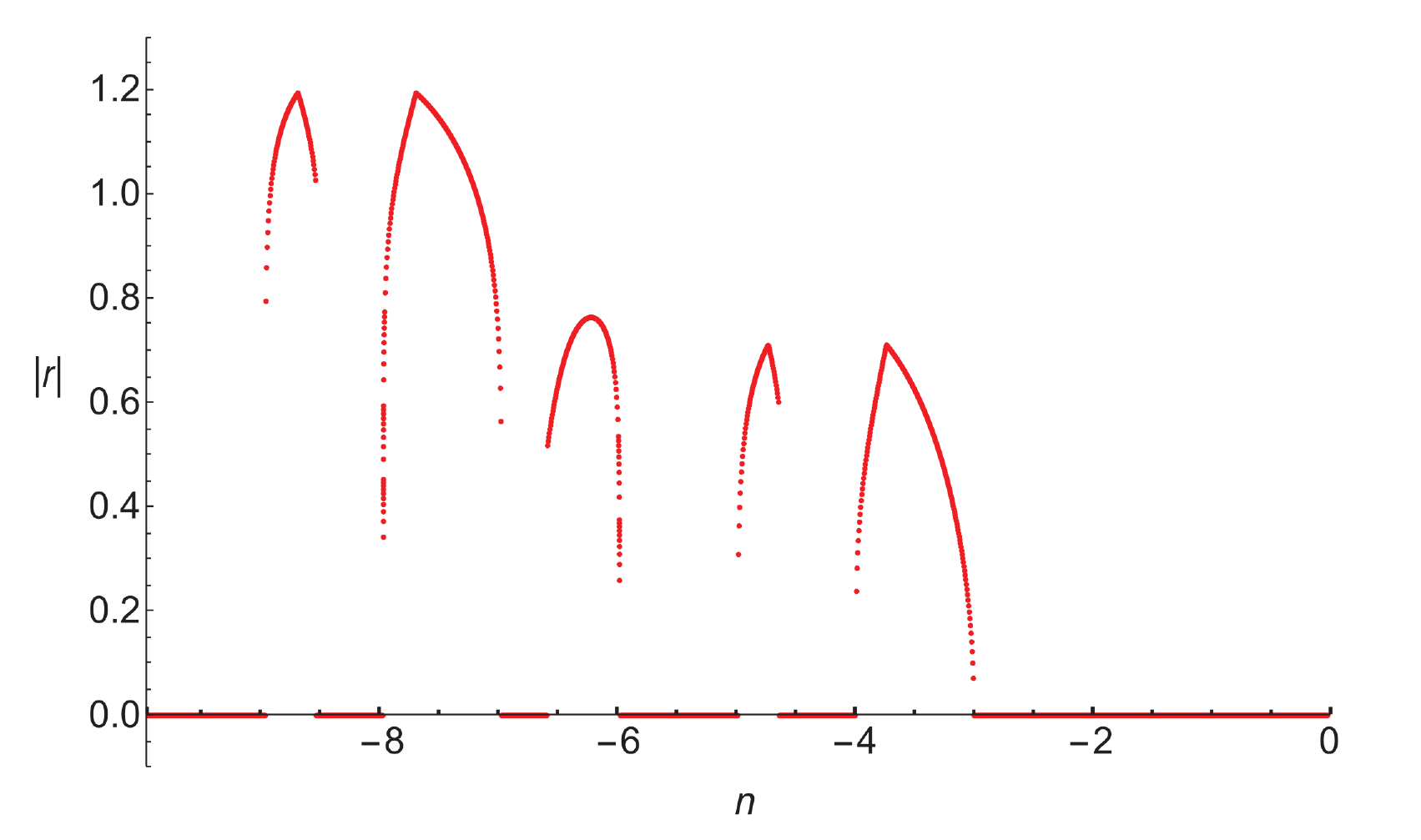}
%../Figures_Gaussian/rvmx.vs.n.outer.trunc.proc.comp.pdf}
\vspace{-0.5cm}
\caption{\small
Plot of the $|r|$ value that results in $\mathrm{V_{max,r}}(n)$ for $n < 0 \subset \mathbb{R}$ in scaled chi distributions,
using outer truncation with a fixed mean of 1.}
\label{fig:rvmx}
\end{figure}

\subsection{Functional approximations of the maximal variance of UTSCDs}
\label{subsec:approx_max_var_inner}
%Plots of the maximal variance of chi distributions.
\begin{figure}[H]
\vspace{-0.5cm}
\hspace{-1.5cm}
\includegraphics[width=7.5in]{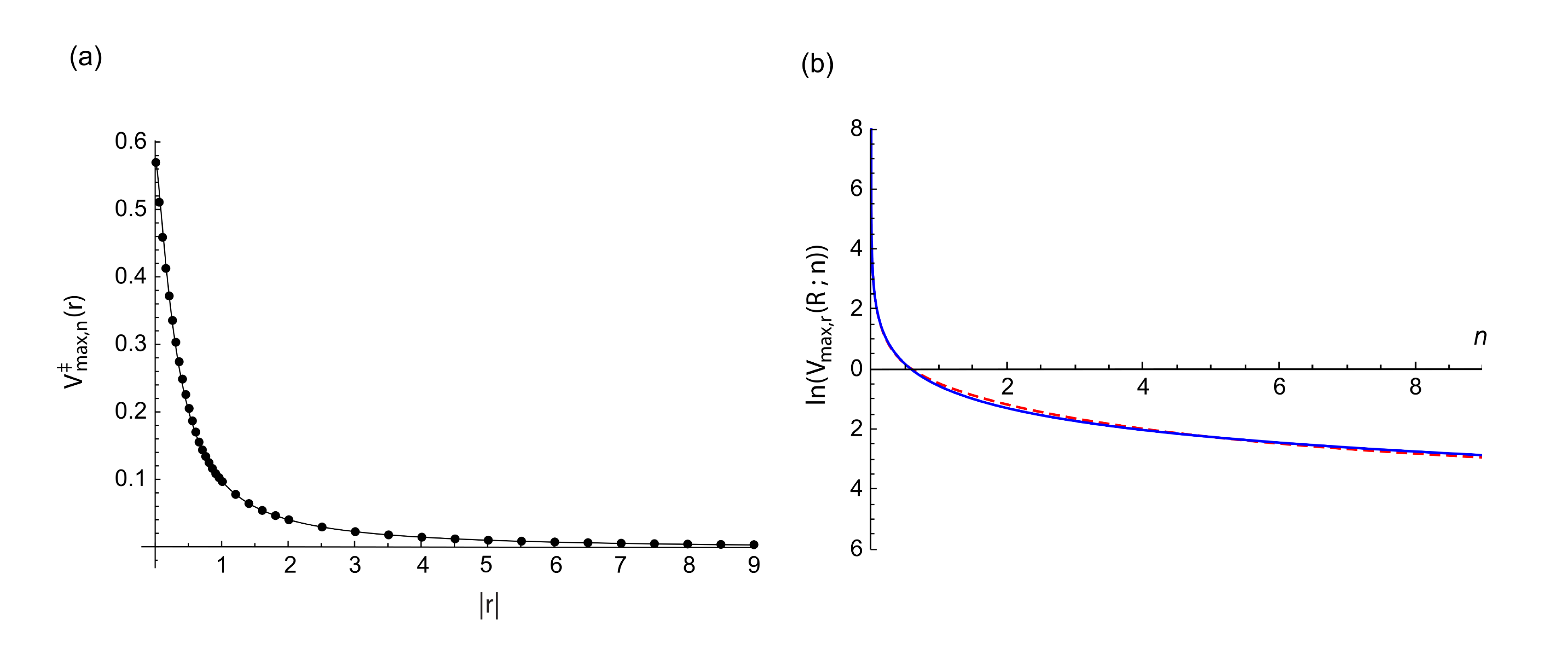}
%chi.approximations.fig.4.crop.comp.pdf}
\vspace{-1.5cm}
\caption{\small Plots of the maximal variance of UTSCDs under inner truncation ($g(R;\sigma,r,n)_{|R\ge a} dR$.
{\bf{Panel (a)}} Comparison of the maximal variance, $\mathrm{V^\ddagger_{max}}_{,n}(r)$ (filled circles) for
UTSCDs over the interval $R \in [a,\infty)$,
and the approximation from Eq. \ref{Vmax_approx} (curve), as a function of the $r$ parameter.
{\bf{Panel(b)}} Plot of the log of the maximal variance $\mathrm{V_{max}}_{,r}(n)$ of UTSCDs over the
interval $R \in [a,\infty)$ as a function of the number of distributional
degrees of freedom, $n>0 \subset \mathbb{R}$, for a fixed mean of 1.
The solid blue curve was generated from the log of the variance as given in Eq.~(\ref{var_ndim_r=0})
and the red, dashed curve from the log of the approximation $\frac{2 e^{-a n}}{n \pi} + \frac{e^{-b/n}}{2n}$,
where $a \approx 0.33118$ and $b \approx 1.10590$.
}
\label{fig:Vmax_approx_comb}
\end{figure}
\subsection{Limits of the variance of truncated, scaled chi distributions}
\label{subsec:lim_var_chi_dist}
\hfill\\
{\bf{A) As $\bm{|r| \rightarrow 0}$}}

First order series expansions as $|r| \rightarrow 0$ give
\bdm
\lgamma\!\left(\frac{n}{2},\frac{r^2}{2}\right) \approx \frac{2|r|^n}{(\sqrt{2})^{\rbox{2pt}{n}} n},
\edm

\bdm
\lgamma\!\left(\frac{n+2}{2},\frac{r^2}{2}\right) \approx \frac{|r|^{n+2}}{(\sqrt{2})^{\rbox{2pt}{n}} (n+2)},
\edm
and
\bdm
\lgamma\!\left(\frac{n+1}{2},\frac{r^2}{2}\right) \approx \frac{|r|^{n+1}}{(\sqrt{2})^{\rbox{2pt}{n-1}} (n+1)}.
\edm

Hence, from Eq.~(\ref{multidim_var}), the variance of an outer-truncated, scaled chi distribution in the limit of
$|r|\rightarrow 0$ is
\beq 
\lim_{|r| \to 0} \mathrm{Var}(R;M,r,n)_{|R\le a}  =
%!M^2 \left(
%!\frac{
%!\left( \frac{2|r|^n}{(\sqrt{2})^{\rboxx{0.8pt}{n}}\, n} \right)
%!\left( \frac{|r|^{n+2}}{(\sqrt{2})^{\rboxx{0.8pt}{n}}\, (n+2)} \right)
%!}
%!%\left( \frac{|r|^{n+1}}{(\sqrt{2})^{\rboxx{0.8pt}{(n-1)}}\, (n+1)} \right)^2} - 1 \right)
%!{\left(\frac{|r|^{n+1}}{(\sqrt{2})^{\rboxx{0.8pt}{(n-1)}}\, (n+1)} \right)^2} -1 \right) 
M^2 \left(
\frac{
\left( \frac{2|r|^n}{(\sqrt{2})^{\rboxx{0.8pt}{n}}\, n} \right)
\left( \frac{|r|^{n+2}}{(\sqrt{2})^{\rboxx{0.8pt}{n}}\, (n+2)} \right)
}
{
\left( \frac{|r|^{n+1}}{(\sqrt{2})^{\rboxx{0.8pt}{n-1}}\, (n+1)} \right)^2
} - 1 
\right)
= \frac{M^2}{n(n+2)},
\label{series_LIG_r=0} 
\eeq

%
%\beq
%\lim_{|r| \to 0} \mathrm{Var}(R;M,r,n)_{|R\le a}  =
%M^2 \left(\frac{  \left(\frac{2|r|^n}{(\sqrt{2})^{\rbox{2pt}{n}} n} \right)
%\left(\frac{|r|^{n+2}}{(\sqrt{2})^{\rbox{2pt}{n}} (n+2)}\right)}
%{\left(\frac{|r|^{n+1}}{(\sqrt{2})^{\rbox{2pt}{(n-1)}} (n+1)}\right)^2} -1\right) = \frac{ M^2}{n(n+2)},
%\label{series_LIG_r=0}
%\eeq
where $n \ne \{0,-2\}$.
Since, $\Gamma(s,x) = \Gamma(s) - \lgamma(s,z)$, the corresponding limit of the variance for inner truncation is
%\bdm
%\lim_{|r| \to 0} \mathrm{Var}(R;M,r,n)_{|R\ge a}  =   M^2 \left(\frac{ \left(\Gamma\left(\frac{n}{2}\right) - 
%\frac{2|r|^n}{(\sqrt{2})^{\rbox{2pt}{n}} n} \right)
%\left(\Gamma\left(\frac{n+2}{2}\right) - \frac{|r|^{n+2}}{(\sqrt{2})^{\rbox{2pt}{n}} (n+2)}\right)}
%{\left(\Gamma\left(\frac{n+1}{2}\right) - \frac{|r|^{n+1}}{(\sqrt{2})^{\rbox{2pt}{(n-1)}} (n+1)}\right)^2} -1\right).
%\edm
\bdm
\lim_{|r| \to 0} \mathrm{Var}(R;M,r,n)_{|R \ge a} =
M^2 \left(
\frac{
\left( \Gamma\!\left(\frac{n}{2}\right) -
\frac{2|r|^n}{(\sqrt{2})^{\rboxx{0.8pt}{n}}\, n} \right)
\left( \Gamma\!\left(\frac{n+2}{2}\right) -
\frac{|r|^{n+2}}{(\sqrt{2})^{\rboxx{0.8pt}{n}}\, (n+2)} \right)
}{
\left( \Gamma\!\left(\frac{n+1}{2}\right) -
\frac{|r|^{n+1}}{(\sqrt{2})^{\rboxx{0.8pt}{n-1}}\, (n+1)} \right)^2
} - 1 \right).
\edm

%\bdm
%\lim_{|r| \to 0} \mathrm{Var}(R;M,r,n)_{|R \ge a} =
%M^2 \left(
%\frac{
%\left( \Gamma\left(\frac{n}{2}\right) -
%\frac{2|r|^n}{(\sqrt{2})^{\rboxx{0.8pt}{n}}\, n} \right)
%\left( \Gamma\left(\frac{n+2}{2}\right) -
%\frac{|r|^{n+2}}{(\sqrt{2})^{\rboxx{0.8pt}{n}}\, (n+2)} \right)
%}{
%\left( \Gamma\left(\frac{n+1}{2}\right) -
%\frac{|r|^{n+1}}{(\sqrt{2})^{\rboxx{0.8pt}{(n-1)}}\, (n+1)} \right)^2
%} - 1 \right).
%\edm

For $n<-2$, the $\Gamma()$ terms are negligible, so the expression reduces to Eq. \ref{series_LIG_r=0} above.
For $n>0$, the $\Gamma()$ terms dominate, and the expression reduces to Eq. \ref{var_ndim_r=0}.
For $-2<n<-1$, the limiting variance is
\bdm
\lim_{|r| \to 0} \mathrm{Var}(R;M,r,n)_{|R\ge a} = \frac{-(\sqrt{2})^n (n+1)^2 \,\Gamma\!\left(\frac{n+2}{2}\right)}
{|r|^{n+2} n} -1  = +\infty.
\edm
For $-1<n<0$, the expression becomes
\bdm
\lim_{|r| \to 0} \mathrm{Var}(R;M,r,n)_{|R\ge a} = \frac{-2|r|^n\, \Gamma\!\left(\frac{n+2}{2}\right)}
{(\sqrt{2})^{\rbox{2pt}{n}} n \Gamma\!\left(\frac{n+1}{2}\right)  } -1   = +\infty.
\edm
\noindent\rule{5cm}{0.4pt} 

\noindent{\bf{B) As $\bm{|r| \rightarrow \infty}$}}\Tstrut  %\Tstrut adds space above text

First-order (Laurent) series expansions at $|r| \rightarrow \infty$ give
\bdm
\lgamma\!\left(\frac{n}{2},\frac{r^2}{2}\right) \approx -\frac{2 e^{-r^2/2}\, |r|^{n-2}}{(\sqrt{2})^{\rbox{2pt}{n}}}
+ \Gamma\!\left(\frac{n}{2}\right),
\edm
\bdm
\lgamma\!\left(\frac{n+2}{2},\frac{r^2}{2}\right) \approx -\frac{ e^{-r^2/2}\, |r|^{n}}{(\sqrt{2})^{\rbox{2pt}{n}}}
+ \Gamma\!\left(\frac{n+2}{2}\right),
\edm
and
\bdm
\lgamma\!\left(\frac{n+1}{2},\frac{r^2}{2}\right) \approx -\frac{ e^{-r^2/2}\, |r|^{n-1}}{(\sqrt{2})^{\rbox{2pt}{n-1}}}
+ \Gamma\!\left(\frac{n+1}{2}\right).
\edm

Hence, from Eq.~(\ref{multidim_var}), in the limit as $|r| \rightarrow \infty$, the variance for outer-truncated 
chi distributions is
\bdm
\lim_{|r| \to \infty} \mathrm{Var}(R;M,r,n)_{|R \le a} =
M^2 \left(
\frac{
\left(
\frac{2 e^{-r^2/2}\, |r|^{n-2}}{(\sqrt{2})^{\rboxx{0.8pt}{n}}} + \Gamma\!\left(\frac{n}{2}\right)
\right)
\left(
\frac{e^{-r^2/2}\, |r|^{n}}{(\sqrt{2})^{\rboxx{0.8pt}{n}}} + \Gamma\!\left(\frac{n+2}{2}\right)
\right)
}{
\left(
\frac{e^{-r^2/2}\, |r|^{n-1}}{(\sqrt{2})^{\rboxx{0.8pt}{n-1}}} + \Gamma\!\left(\frac{n+1}{2}\right)
\right)^2
} - 1 \right),
\label{series_LIG_r=inf}
\edm

%\bdm
%\lim_{|r| \to \infty} \mathrm{Var}(R;M,r,n)_{|R\le a}
%=  M^2 \left(\frac{
%\left(\frac{2 e^{-r^2/2} |r|^{n-2}}{(\sqrt{2})^{\rbox{2pt}{n}}} + \Gamma\left(\frac{n}{2} \right)\right)
%\left(\frac{e^{-r^2/2} |r|^{n}}{(\sqrt{2})^{\rbox{2pt}{n}}} + \Gamma\left(\frac{n+2}{2}\right)\right)}
%{\left(\frac{ e^{-r^2/2} |r|^{n-1}}{(\sqrt{2})^{\rbox{2pt}{n-1}}} + \Gamma\left(\frac{n+1}{2}\right) \right)^2}-1 \right),
%\label{series_LIG_r=inf}
%\edm
For large $r$, the non-$\Gamma()$ terms vanish, and the expression again reduces to that in Eq.~(\ref{var_ndim_r=0}).

For inner truncation, again using the relationship between $\Gamma(,)$ and $\lgamma(,)$, along with the above, we have
\bdm
\lim_{|r| \to \infty} \mathrm{Var}(R;M,r,n)_{|R \ge a} =
M^2 \left(
\frac{
\left(
\frac{2 e^{-r^2/2}\, |r|^{n-2}}{(\sqrt{2})^{\rboxx{0.8pt}{n}}}
\right)
\left(
\frac{e^{-r^2/2}\, |r|^{n}}{(\sqrt{2})^{\rboxx{0.8pt}{n}}}
\right)
}{
\left(
\frac{e^{-r^2/2}\, |r|^{n-1}}{(\sqrt{2})^{\rboxx{0.8pt}{n-1}}}
\right)^2
} - 1 \right),
\label{series_UIG_r=inf}
\edm

%\bdm
%\lim_{|r| \to \infty} \mathrm{Var}(R;M,r,n)_{|R\ge a}
%=  M^2 \left(\frac{
%\left(\frac{2 e^{-r^2/2} |r|^{n-2}}{(\sqrt{2})^{\rbox{2pt}{n}}})\right)
%\left(\frac{e^{-r^2/2} |r|^{n}}{(\sqrt{2})^{\rbox{2pt}{n}}} \right)}
%{\left(\frac{ e^{-r^2/2} |r|^{n-1}}{(\sqrt{2})^{\rbox{2pt}{n-1}}}\right)^2} - 1\right) ,
%\label{series_UIG_r=inf}
%\edm
The large fractional term  is 1, so the limit is 0.
\subsection{Expressions related to outer-truncated UTSCDs}
\label{subsec:express_outer_trun}
The pdf for a scaled chi distribution, $g(R;\sigma, n) dR$, with support over the interval $R \in [0,a]$ 
is similar to the expression in Eq.~(\ref{gaussian_pdf_of_R}):
\beq
g(R;\sigma,n)_{|R \le a} dR =
\frac{2 e^{-R^2/2\sigma^2} R^{n-1}}{(\sqrt{2} \sigma)^{\rbox{2pt}{n}}\, {\lgamma}\!\left(\frac{n}{2},\frac{r^2}{2}\right)}  dR.
\label{gaussian_pdf_of_R_inv}
\eeq

The $k$th raw moment, similar to Eq.~(\ref{kth_raw_moment}), is:
\beq
M_k(\sigma,r,n)_{|R\le a} = \frac{\left(\sqrt{2} \sigma\right)^k \lgamma\!\left(\frac{n+k}{2},\frac{r^2}{2}\right)}
{\lgamma\!\left(\frac{n}{2},\frac{r^2}{2}\right)},
\label{kth_raw_moment_outer}
\eeq
provided $n>-k$.

Form I of the variance, similar to Eq.  (\ref{inner_unsub_var}),  is
\beq
\mathrm{Var}(R;\sigma,r,n)_{|R \le a}=\frac{2 \sigma^2\left(\lgamma\!\left(\frac{n}{2},\frac{r^2}{2}\right) 
\lgamma\!\left(\frac{n+2}{2},\frac{r^2}{2}\right)
-\lgamma\!\left(\frac{n+1}{2},\frac{r^2}{2}\right)^2\right)}
{\lgamma\!\left(\frac{n}{2},\frac{r^2}{2}\right)^2}
\label{outer_unsub_var}
\eeq
Letting $k = 1$ in Eq.~(\ref{kth_raw_moment_outer}),
\beq
\sigma(M,r,n)_{|R\le a} = \frac{M}{\sqrt{2}} \cdot
\frac{\lgamma\!\left(\frac{n}{2},\frac{r^2}{2}\right)}
{\lgamma\!\left(\frac{n+1}{2},\frac{r^2}{2}\right)}
\label{sigma_multi_inv}
\eeq
Form II of the variance, then, is similar to the expression in Eq.~(\ref{multidim_var}):
\beq
\mathrm{Var}(R;M,r,n)_{|R\le a} =
M^2 \left(
\frac{\lgamma\!\left(\frac{n}{2},\frac{r^2}{2}\right)
\lgamma\!\left(\frac{n+2}{2},\frac{r^2}{2}\right)}
{\lgamma\!\left(\frac{n+1}{2},\frac{r^2}{2}\right)^2} -1
\right)
\label{multidim_var_inv}
\eeq
where, again, the large, fractional term is the second raw moment divided by $M^2$.
\subsection{Doubly truncated scaled chi distributions}
\label{doubly_truncated_chi}
A scaled chi distribution with $n$ distributional degrees of freedom over the interval
$R \in [a,b], b > a, \{b,a\} \in \mathbb{R+}$ has the form
\beq
g(R;\sigma,r_{_1},r_{_2},n)_{|a\le R\le b} dR =
\frac{2 e^{-R^2/2\sigma^2} R^{n-1}}
{(\sqrt{2}\sigma)^{\rbox{2pt}{n}}~ \left(\Gamma\!\left(\frac{n}{2},\frac{{r_1}^2}{2}\right)-
\Gamma\!\left(\frac{n}{2},\frac{{r_2}^2}{2}\right)\right)} dR ,
\label{multidim_gaussian_pdf_gen}
\eeq
where $r_1 = -a/\sigma$ and $r_2 = -b/\sigma$.

Define the function $\Gamma\!(x,y_{_1},y_{_2}) = \Gamma(x,y_{_1}) - \Gamma(x,y_{_2})$.
This is sometimes referred to as the generalized Gamma function.
Then the $k$th raw moment over $R \in [a,b]$ is
\beq
M_k(\sigma,k,a,b,n)_{|a\le R\le b} = \frac{\left(\sqrt{2} \sigma\right)^k
\Gamma\!\left(\frac{n+k}{2},\frac{a^2}{2 \sigma^2}, \frac{b^2}{2 \sigma^2}\right)}
{\Gamma\!\left(\frac{n}{2},\frac{a^2}{2 \sigma^2},\frac{b^2}{2 \sigma^2}\right)}
\label{kth_raw_moment_gen}
\eeq
and the unsubstituted form or Form I of the variance is
\beq
\mathrm{Var}(R;\sigma,a,b,n)_{|a\le R\le b} =
\frac{2 \sigma^2
\Gamma\!\left(\frac{n+2}{2},\frac{a^2}{2 \sigma^2},\frac{b^2}{2 \sigma^2}\right)
\left(\Gamma\!\left(\frac{n}{2},\frac{a^2}{2 \sigma^2},\frac{b^2}{2 \sigma^2}\right) -
\Gamma\!\left(\frac{n+1}{2},\frac{a^2}{2 \sigma^2},\frac{b^2}{2 \sigma^2}\right)\right)}
{\Gamma\!\left(\frac{n}{2},\frac{a^2}{2 \sigma^2},\frac{b^2}{2 \sigma^2}\right)^2}.
\label{gen_unsub_var}
\eeq
Form II of the variance is
\beq
\mathrm{Var}(R;M,a,b,n)_{|a \le R \le b} =
M^2 \left(\frac{\Gamma\!\left(\frac{n}{2},\frac{a^2}{2\sigma^2},\frac{b^2}{2\sigma^2}\right)
\Gamma\!\left(\frac{n+2}{2},\frac{a^2}{2\sigma^2},\frac{b^2}{2\sigma^2}\right)}
{\Gamma\!\left(\frac{n+1}{2},\frac{a^2}{2\sigma^2},\frac{b^2}{2\sigma^2}\right)^2}-1\right)
\label{multidim_var_gen}
\eeq
where the fractional term is the second raw moment divided by $M^2$.

\newpage
\begin{center}
{\large\bf {Acknowledgments}}
\end{center}
The author thanks Victor Ovchinnikov for comments on the manuscript.
He thanks the late Prof. Martin Karplus, 
Prof. Dan Kahne, the Department of Chemistry and Chemical Biology at
Harvard University, Harvard Medical School, 
and Paul Conlin and the Boston VA Medical Center for support.

\bibliographystyle{jasa3}
\bibliography{rjp.gauss.var.11.14.25}

\setcounter{figure}{0}
\setcounter{table}{0}
\renewcommand{\thefigure}{S\arabic{figure}}
\renewcommand{\thetable}{S\arabic{table}}
\clearpage

\section{Supplementary Material}
\subsection{Numerical values for the variance of a truncated Gaussian distribution and its derivative}
\label{subsec:val_partial_der_var}
\hfill\\
Table S1 lists $\partial (\mathrm{Var}(r))/\partial r$ and $\mathrm{Var}(r)$ values 
for a Gaussian distribution
over the interval $x \in [0,\infty)$ and at specific values of $r = \mu/\sigma$, assuming a mean of 1.

\begin{table}[h]
\tabcolsep=0pt
\begin{tabular*}{1.0\columnwidth}{@{\extracolsep{\fill}}lrrr@{}}
\hline
\hline
\multicolumn{2}{c}{$r$} & \multicolumn{1}{c}{$\partial (\mathrm{Var}(r))/\partial r$} &
\multicolumn{1}{c}{$\mathrm{Var}(r)$}\\
\hline
 -262144 & $-2^{18}$ & $-2.22044605 \times 10^{-16}$ & $1- 2.91038304 \times 10^{-11}$\\
 -131072 & $-2^{17}$ & $-1.77635684  \times 10^{-15}$ & $1- 1.16415322 \times 10^{-10}$\\
 -65536 & $-2^{16}$ & $-1.42108547 \times 10^{-14}$ & $1- 4.65661286 \times 10^{-10}$\\
 -32768 & $-2^{15}$ & $-1.13686836 \times 10^{-13}$ & $1 - 1.86264513 \times 10^{-9}$\\
 -16384 & $-2^{14}$ & $-9.09494641 \times 10^{-13}$ & $1 - 7.45058035  \times 10^{-9}$\\
 -8192 & $-2^{13}$ & $-7.27595566 \times 10^{-12}$ & $1 - 2.98023184 \times 10^{-8}$\\
 -4096 & $-2^{12}$ & $-5.82075985 \times 10^{-11}$ & $1 - 1.19209226 \times 10^{-7}$ \\
 -2048 & $-2^{11}$ & $-4.65659289 \times 10^{-10}$ & $1 - 4.76836135 \times 10^{-7}$ \\
 -1024 & $-2^{10}$ & $-3.72522635 \times 10^{-9}$ &  $1 - 1.90733226 \times 10^{-6}$ \\
 -512 & $-2^9$ & $-2.98002762 \times 10^{-8}$ &        $1 - 7.62913261 \times 10^{-6}$ \\
 -256 & $-2^8$ & $-2.38353113 \times 10^{-7}$ &        $1 - 0.000030513388$ \\
 -128 & $-2^7$ & $-1.90525539 \times 10^{-6}$ &        $1 - 0.00012200330$ \\
 -64 & $-2^6$ & -0.000015192019 & 0.99951279 \\
 -32 & $-2^5$ & -0.00011996057 &0.99806385 \\
 -16 & $-2^4$ & -0.00091227837 &0.99245028 \\
 -8 & $-2^3$ & -0.0060833737 & 0.97248274 \\
 -4 & $-2^2$ & -0.028580298 & 0.91697715 \\
 -2 & $-2^1$ & -0.076308287 & 0.82044107 \\
 -1 & $-2^0$ & -0.12343379 & 0.72197770  \\
 -0.5 & $-2^{-1}$ & -0.15152240 & 0.65326766 \\
 -0.25 & $-2^{-2}$ & -0.16516707 & 0.61366302 \\
 -0.125 & $-2^{-3}$ & -0.17154333 & 0.59261450 \\
 -0.0625 & $-2^{-4}$ & -0.17456721 & 0.58179790 \\
 -0.03125 & $-2^{-5}$ & -0.17603105 & 0.57631972 \\
 -0.015625 & $-2^{-6}$ & -0.17675005 & 0.57356361 \\
 -0.0078125 & $-2^{-7}$ & -0.17710620 & 0.57218135 \\
 -0.00390625 & $-2^{-8}$ & -0.17728342 & 0.57148919 \\
 -0.001953125 & $-2^{-9}$ & -0.17737182 & 0.57114284\\
 -0.0009765625 & $-2^{-10}$ & -0.17741597 & 0.57096961 \\
 -0.00048828125 & $-2^{-11}$ & -0.17743802 & 0.57088297 \\
 -0.00024414062 & $-2^{-12}$ & -0.17744905 & 0.57083965 \\
 0.00024414062 & $2^{-12}$ & -0.17747110 & 0.57075300 \\
 0.00048828125 & $2^{-11}$ & -0.17748212 & 0.57070967 \\
 0.0009765625 & $2^{-10}$ & -0.17750415 & 0.57062300 \\
\hline
\end{tabular*}
\label{tab:values_partial_Var}
\end{table}

\clearpage
\newpage
\begin{table}[h]
\tabcolsep=0pt
\begin{tabular*}{1.0\columnwidth}{@{\extracolsep{\fill}}lrrr@{}}
\hline
\multicolumn{2}{c}{$r$} & \multicolumn{1}{c}{$\partial (\mathrm{Var}(r))/\partial r$} &
\multicolumn{1}{c}{$\mathrm{Var}(r)$}\\
\hline
 0.001953125 & $2^{-9}$ & -0.17754818 & 0.57044964 \\
 0.00390625 & $2^{-8}$ & -0.17763615 & 0.57010278 \\
 0.0078125 & $2^{-7}$ & -0.17781164 & 0.56940855 \\
 0.015625 & $2^{-6}$ & -0.17816088 & 0.56801803 \\
 0.03125 & $2^{-5}$ & -0.17885219 & 0.56522885 \\
 0.0625 & $2^{-4}$  & -0.18020529 & 0.55961847 \\
 0.125 & $2^{-3}$  & -0.18278603 & 0.54827409 \\
 0.25 & $2^{-2}$  & -0.18738644 & 0.52513005 \\
 0.5 & $2^{-1}$  & -0.19388347 & 0.47738919 \\
 1 & $2^0$ & -0.19311248 & 0.37980643 \\
 2 & $2^1$ & -0.13737522 & 0.20985861 \\
 4 & $2^2$ & -0.031087941 & 0.062462360 \\
 8 & $2^3$ & -0.0039062500 & 0.015625000 \\
 16 & $2^4$ & -0.00048828125 & 0.0039062500 \\
 32 & $2^5$ & -0.000061035156 & 0.00097656250 \\
 64 & $2^6$ & $-7.62939453 \times 10^{-6}$ & 0.00024414062 \\
 128 & $2^7$ & $-9.53674316 \times 10^{-7}$ & 0.000061035156 \\
 256 & $2^8$ & $-1.19209290  \times 10^{-7}$ & 0.000015258789 \\
 512 & $2^9$ & $-1.49011612  \times 10^{-8}$ & $3.81469727 \times 10^{-6}$ \\
 1024 & $2^{10}$ & $-1.86264515  \times 10^{-9}$ & $9.53674316 \times 10^{-7}$ \\
 2048 & $2^{11}$ & $-2.32830644  \times 10^{-10}$ & $2.38418579 \times 10^{-7}$ \\
4096 & $2^{12}$ & $-2.91038305 \times 10^{-11}$ & $5.96046448 \times 10^{-8}$ \\
8192 & $2^{13}$ & $-3.63797881 \times 10^{-12}$ & $1.49011612 \times 10^{-8}$ \\
16384 & $2^{14}$ & $-4.54747351 \times 10^{-13}$ & $3.72529030 \times 10^{-9}$ \\
32768 & $2^{15}$ & $-5.68434189 \times 10^{-14}$ & $9.31322575 \times 10^{-10}$ \\
65536 & $2^{16}$ & $-7.10542736 \times 10^{-15}$ & $2.32830644  \times 10^{-10}$ \\
131072 & $2^{17}$ & $-8.88178420 \times 10^{-16}$ & $5.82076609 \times 10^{-11}$ \\
\hline
\end{tabular*}
%\captionof{}{Table S1. Values of $\partial (\mathrm{Var}(x;M,r)_{|x \ge 0}/\partial r$ and
\caption{Values of $\partial (\mathrm{Var}(x;M,r)_{|x \ge 0}/\partial r$ and
$\mathrm{Var}(x;M,r)_{|x \ge 0}$ for a Gaussian \Tstrut  distribution 
 $f(x)_{|x > 0}$ at different values of $r$, evaluated at $M=1$.}
\label{tab:end_values_part_var}
\end{table}

\subsection{Derivative of the variance of a UTGD at large $r$}
\label{subsec:derive_var_large_r}
The form of the of derivative of $ \mathrm{Var}(x;M,r)_{|x \ge a}$
is given by Eq. (\ref{eq_dvar}) in Lemma~\ref{lemma:var_different}.

A first-order power series expansion around $r \rightarrow +\infty$ gives
\bdm
\frac{1}{(M-a)^2}\frac{\partial (\mathrm{Var}(r))}{\partial r} \approx 
\frac{e^{-r^2/2}(3/r^2 + 1)}{\sqrt{2\pi}} -\frac{2}{r^3},
\edm
where the
shorthand $\mathrm{Var}(r) = \mathrm{Var}(x;M,r)_{|x \ge a}$ is being used.

For large, positive $r$, the first term vanishes and thus
\bdm
\frac{1}{(M-a)^2}\frac{\partial (\mathrm{Var}(r))}{\partial r} \sim -\frac{2}{r^3},
\edm

For large, negative $r$, the 1st-order and all even-order terms are zero, so
a higher-order expansion must be done. A ninth-order expansion gives:
\bdm
\frac{1}{(M-a)^2}\frac{\partial (\mathrm{Var}(r))}{\partial r} \approx \frac{4}{r^3} -\frac{72}{r^5}
 + \frac{1260}{r^7} -\frac{23184}{r^9}.
\edm
As $r \rightarrow -\infty$, the first term dominates, so that
\bdm
\frac{1}{(M-a)^2}\frac{\partial (\mathrm{Var}(r))}{\partial r} \approx  \frac{4}{r^3}.
\edm
\noindent\rule{5cm}{0.4pt} 
\subsection{Fifth and sixth central moments of a truncated Gaussian}
\label{subsec:5th_6th_CM}
\hfill\\
\indent For a Gaussian distribution $f(x)$, the (unnormalized) 5th central moment over the interval 
$x \in [a,\infty)$ can be written as
\bdm
\overline{M}{_5} = \left(\frac{M-a}{rB+2}\right)^5 
2\left( (r^4 + 4r^2 -7)B^4 + 10 r (r^2+3) B^3 + 40(r^2+1) B^2 + 80 r B +64\right),
\edm
and the 6th is
\bdm
\overline{M}{_6} = \left(\frac{M-a}{rB+2}\right)^6 
\left(15 B^6 -2r (r^4+5r^5 + 15) B^5 - 12 (2r^4 + 8 r^2 + 1) B^4 \right. 
\edm
\bdm
\left.-120 r(r^2+3) B^3 -80 (4r^2 + 5) B^2 -480 r B -320 \right) ,
\edm
where $B = \sqrt{2\pi} e^{r^2/2} \xi(r) = \sqrt{2\pi} e^{r^2/2} (\erf\!\left(r/\sqrt{2}\right)+ 1). $
\subsection{Expressions for calibration of truncated log-normal distributions}
\label{subsec:calib_log-normal}
If a truncated Gaussian with parameters $\mu$ and $\sigma$ describes a natural-log-transformed dataset $X = \ln(Y)$,
over support $x > a$, 
then the natural log of the Form I variance of the original dataset, $Y$, over support $y > e^a$, 
expressed in terms of those same $\mu$ and $\sigma$ parameters is
\bdm
\ln\left(\mathrm{Var}(y;r,\sigma,\mu)_{|y>e^a}\right) = 
2\sigma^2 + 2\mu + \ln(\xi(r+2\sigma)) + 
\ln\left(1 - \frac{e^{-\sigma^2} \xi(r+\sigma)^2}{\xi(r+2\sigma)\cdot \xi(r) }\right) - \ln(\xi(r)),
\edm
and the log of the Form II variance is
\bdm
\ln\left(\mathrm{Var}(y;r,M,\sigma,\mu)_{|y>e^a}\right) =
  2 \ln(M) + \ln\left(\frac{e^{-2\mu}M^2 \xi(r+2\sigma)\cdot \xi(r)^3}
{\xi(r+\sigma)^4}-1\right), 
\edm
where $M$ is the mean, $\xi(r) = \erf(r/\sqrt{2}) + 1$, and $r = (\mu-a)/\sigma$.

In addition, the slope of $\sigma(\mu)$ under constant variance, for Form I of the variance of the
original truncated dataset, is:  
\bdm
\frac{d\sigma_1}{du} = -\frac{\frac{d\left(\ln(\mathrm{Var_{_I}})\right)}{du}}{\frac{d\left(\ln(\mathrm{Var_{_I}})\right)}{d\sigma}} =
-\frac{\frac{d\mathrm{Var_{_I}}/du}{\mathrm{Var_{_I}}}}  {\frac{d\mathrm{Var_{_I}}/d\sigma}{\mathrm{Var_{_I}}}} =
\frac{ \frac{d\mathrm{Var_{_I}}}{du}} {\frac{d\mathrm{Var_{_I}}}{d\sigma}} = 
\edm

\[
\frac{
  \begin{array}{c}
      e^{\sigma^2} 
      \left(
        2 e^{-\sigma(r+3\sigma/2)} \xi(r+\sigma) 
        - \xi(r+2\sigma) 
        - e^{-2 \sigma(r + \sigma)} \xi(r)
      \right)
      + \\
      + \left(
         \frac{\xi(r+\sigma)^2}{\xi(r)} 
        - e^{\sigma^2} \xi(r+2\sigma) 
      \right)
      \left(
        \sqrt{2\pi} \sigma e^{r^2/2} \xi(r)-2
      \right)
  \end{array}
}{
\begin{array}{c}
    2 (r - \sigma) e^{-\sigma(r+ \sigma/2)}\xi(r+\sigma) 
    + r \left(
        e^{\sigma^2}\xi(r+2\sigma) 
        - \frac{2 \xi(r+\sigma)^2}{\xi(r)}
    \right) + \\
    + (2\sigma - r)  e^{-\sigma(2r+\sigma)}  \xi(r)
    + \sqrt{2\pi} \sigma^2 e^{r^2/2}  
    \left(
        2 e^{\sigma^2} \xi(r+2\sigma) \xi(r)-\xi(r+\sigma)^2
    \right)
\end{array}
}~~,
\]

and, similarly, for Form II, the slope of $\sigma(\mu)$ is: 
\bdm
\frac{d\sigma_2}{du} = 
-\frac{ \frac{d\mathrm{Var_{_{II}}}}{du}} {\frac{d\mathrm{Var_{_{II}}}}{d\sigma}} = 
\edm

\bdm
\frac{
\begin{array}{c}
3J e^{2\sigma(r+\sigma)} + \xi(r+\sigma) - 4   e^{\sigma(r+3\sigma/2)} \xi(r+2\sigma)
-\sqrt{2\pi}\sigma  e^{(r+2\sigma)^2/2} \xi(r+\sigma)\cdot \xi(r+2\sigma)
\end{array}
}
{
\begin{array}{c}
3Jr e^{2\sigma(r+\sigma)} + (r-2\sigma)\xi(r+\sigma) - 4(r-\sigma) e^{\sigma(r+3\sigma/2)}\xi(r+2\sigma)
\end{array}
}, 
\edm
%where $J = \frac{\xi(r + \sigma)\cdot \xi(r+2\sigma)}{\xi(r)}.
where $J = \xi(r + \sigma)\cdot \xi(r+2\sigma)/\xi(r)$.

\subsection{Laurent series expansions}
\label{subsec:Laurent_ser_expan}
\hfill\\
\noindent Let $\xi(r)= \erf\!\left(r/\sqrt{2}\right)+ 1$ and
\bdm
S_1 = \frac{1}{r^2} - \frac{2}{r^4} + \frac{7}{r^6}
- \frac{36}{r^8} + \frac{249}{r^{10}} - \frac{2190}{r^{12}} + \frac{23535}{r^{14}} - \frac{299880}{r^{16}} +\mathcal{O}\left((1/r)^{18}\right),
\edm
\bdm
S_2 = \frac{1}{r^2}
- \frac{3}{r^4} + \frac{15}{r^6} -\frac{105}{r^8} + \frac{945}{r^{10}} - \frac{10395}{r^{12}} + \frac{135135}{r^{14}}
- \frac{2027025}{r^{16}} + \mathcal{O}\left((1/r)^{18}\right),
\edm
\bdm
S_3 = \frac{1}{r^4}
- \frac{6}{r^6} + \frac{39}{r^8} -\frac{300}{r^{10}} + \frac{2745}{r^{12}} - \frac{29610}{r^{14}} + \frac{372015}{r^{16}}
+  \mathcal{O}\left((1/r)^{18}\right).
\edm

{\setlength{\abovedisplayskip}{4pt}
 \setlength{\belowdisplayskip}{4pt}
\noindent Then, as $r \rightarrow -\infty$,
\begin{flalign*}
\hspace{1cm} \pi e^{r^2}\xi(r)^2  =  2 S_1, &&
\end{flalign*}
\begin{flalign*}
\hspace{1cm} \sqrt{2\pi} r e^{r^2/2} \xi(r)  =  2(S_2-1), ~\mbox{and} &&
\end{flalign*}
\begin{flalign*}
\hspace{1cm} \pi\left(\sqrt{2/\pi} + r e^{r^2/2} \xi(r) \right)^2  =  2 S_3. &&
\end{flalign*}
\noindent And as $r \rightarrow +\infty,$
\begin{flalign*}
\hspace{1cm} \pi e^{r^2} \xi(r)^2  =  4\pi e^{r^2} - 2 S_1 + \frac{4\sqrt{2\pi} e^{r^2/2} (S_2-1)}{r},  &&
\end{flalign*}
\begin{flalign*}
\hspace{1cm} \sqrt{2\pi} r e^{r^2/2} \xi(r)  =  \sqrt{8\pi} r e^{r^2/2} + 2(S_2-1), &&
\end{flalign*}
\begin{flalign*}
\hspace{1cm} \pi\left(\sqrt{2/\pi} + r e^{r^2/2} \xi(r)
\right)^2  =  4\pi r^2 e^{r^2} + 4\sqrt{2\pi}r e^{r^2/2} S_2 + 2S_3, ~\mbox{and} &&
\end{flalign*}
\begin{flalign*}
\hspace{1cm} e^{r^2/2} \xi(r)  =   2 e^{r^2/2} +  \frac{\sqrt{2/\pi}(S_2-1)}{r}. &&
\end{flalign*}
}

\subsection{Plot of $d\sigma_1/d\mu$ for a UTGD}
\label{subsec:plot_ds1_du}
The plot of $d\sigma_1/d\mu$--i.e., the derivative of the function $\sigma(\mu)$ under constant
variance, for Form I of the variance of a UTGD, is shown in Figure \ref{fig:ds1_du_plot}.  
\begin{figure}[h]
\centering
\includegraphics[width=5in]{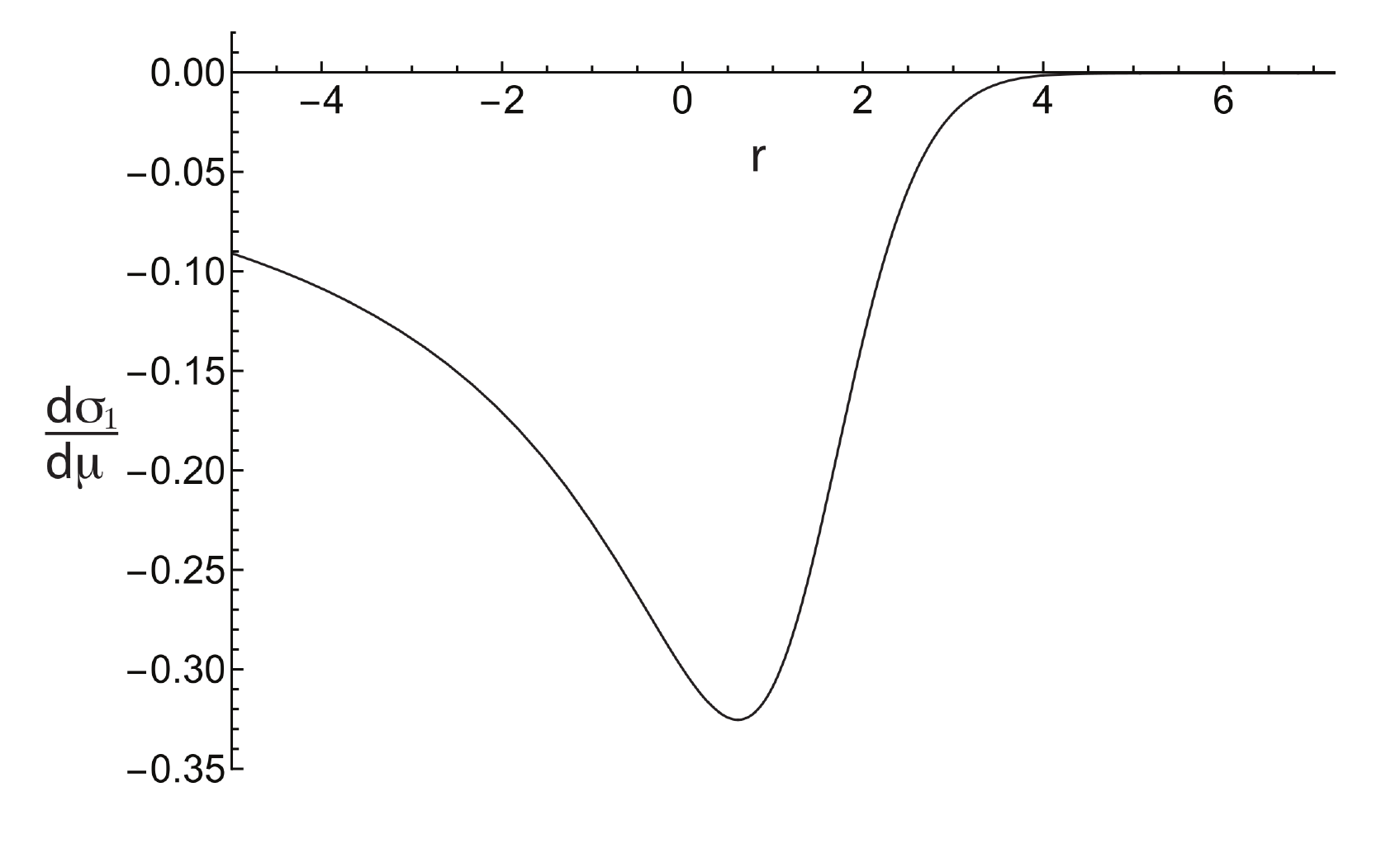}
%ds.du.Form.I.plot.2.proc.comp.pdf}
\vspace{-0.25cm}
\caption{\small
Plot of $d\sigma_1/d\mu$, which is the derivative of the function $\sigma(\mu)$ 
under the condition of a constant variance
and using Form I (the unsubstituted form) of the variance (i.e., Eq. \ref{var_sig_r})
of a unilaterally truncated Gaussian distribution with support over $x \in [a,\infty)$. 
}
\label{fig:ds1_du_plot}
\end{figure}

\end{document}